  \def\Xint#1{\mathchoice
    {\XXint\displaystyle\textstyle{#1}}%
    {\XXint\textstyle\scriptstyle{#1}}%
    {\XXint\scriptstyle\scriptscriptstyle{#1}}%
    {\XXint\scriptscriptstyle\scriptscriptstyle{#1}}%
    \!\int}
    \def\XXint#1#2#3{{\setbox0=\hbox{$#1{#2#3}{\int}$ }
    \vcenter{\hbox{$#2#3$ }}\kern-.6\wd0}}
    \def\dashint{\Xint-}
\newcommand{\bx}{{\partial X}}
\newcommand{\Tr}{{\rm Tr}\,}
\newcommand{\Mod}{{\rm Mod}}
\newcommand{\N}{{\mathbb N}}
\newcommand{\real}{{\mathbb R}}
\newcommand{\rarrow}{\rightarrow}
\newtheorem{thm}{Theorem}[section]
\newtheorem{lem}[thm]{Lemma}
\newtheorem{prop}[thm]{Proposition}
\newtheorem{rem}[thm]{Remark}
\newtheorem{cor}[thm]{Corollary}
\newtheorem{defn}[thm]{Definition}
\newtheorem{example}[thm]{Example}
\numberwithin{equation}{section}
\begin{document}
\title{\Large\bf  Trace and density results on regular trees
\footnotetext{\hspace{-0.35cm}
$2010$ {\it Mathematics Subject classfication}: 46E35, 30L99
\endgraf{{\it Key words and phases}: regular tree, boundary trace, Newtonian space, density }
\endgraf{All authors have been supported by the Academy of Finland via  Centre of Excellence in Analysis and Dynamics Research (project No. 307333). This work was also partially supported by the grant 346300 for IMPAN from the Simons Foundation and the matching 2015-2019 Polish MNiSW fund.}
}
}
\author{Pekka Koskela, Khanh Ngoc Nguyen, and Zhuang Wang}
\date{}
\maketitle
\begin{abstract}
We give characterizations for the existence of traces for first order Sobolev spaces defined on regular trees.
\end{abstract}

\section{Introduction}

 Let $X$ be a rooted $K$-ary tree, $K\ge 1.$ We introduce a metric structure on $X$ by considering each edge of $X$ to be an isometric copy of the unit interval. Then the distance between two vertices is the number of edges needed to connect them and there is a unique geodesic that minimizes this number.  Let us denote the root by 0. If $x$ is a vertex, we define $|x|$ to be the distance between 0 and $x.$ Since each edge is an isometric copy of the unit interval, we may extend this distance naturally to any $x$ belonging to an edge.  We define $\partial X$ as the collection of all infinite geodesics starting at the root 0. Then every $\xi\in\partial X$ corresponds to an infinite geodesic $[0,\xi)$ (in $X$) that is an isometric copy of the interval $[0,\infty).$ Hence $x\to \xi$ along $[0,\xi)$ has a canonical meaning.

Given a function $f$ defined on $X,$ we are interested in the collection of those $\xi\in \partial X$ for which the limit of $f(x)$ exists when $x\to \xi$ along $[0,\xi).$ We begin by equipping $\partial X$ with the natural probability measure $\nu$ so that $\nu(I_x)=K^{-j}$ when $x$ is a vertex with $|x|=j$ and 
$I_x=\{\xi\in\partial X:\ x\in [0,\xi)\}.$ Towards defining the classes of functions that are of interest to us, we define a measure and a new distance function on $X.$ Write $d|x|$ for the length element on $X$ and let $\mu:[0,\infty)\to (0,\infty)$ be a locally integrable function. We abuse notation and refer also to the measure generated via $d\mu(x)=\mu(|x|)d|x|$ by $\mu.$ Further, let $\lambda:[0,\infty)\to (0,\infty)$ be locally integrable and define a distance via $ds(x)=\lambda(|x|)d|x|$ by setting $d(z,y)=\int_{[z,y]}ds(x)$
whenever $z,y\in X$ and $[z,y]$ is the unique geodesic between $z$ and $y.$ For convenience, we assume additionally that $\lambda^p/\mu\in L^{1/(p-1)}_{\rm loc}([0, \infty))$ if $p>1$ below and that $\lambda/\mu\in L^{\infty}_{\rm loc}([0, \infty)$ if $p=1$. Then $(X,d,\mu)$ is
a metric measure space and we let $N^{1,p}(X):=N^{1, p}(X, d, \mu)$, $1\le p<\infty,$ be the associated Sobolev space based on upper gradients \cite{HK98}, as introduced in \cite{N00}. See Section 2 for the precise definition. We show there
that, actually, each $u\in N^{1,p}(X)$  is absolutely continuous
on each edge, with $u'\in L^p_{\mu}(X).$ As usual, $N^{1,p}_0(X)$ is the completion of the family of functions with compact support in $N^{1,p}(X).$

In order to state our results, we need two more concepts. Given $1<p<\infty$ we set
$$R_p=\int_0^{\infty} \lambda(t)^{\frac{p}{p-1}}\mu(t)^{\frac{1}{1-p}}K^{\frac{ t }{1-p}}\, dt=\left\|\frac{\lambda(t)}{\mu(t)^{1/p} K^{t/p}}\right\|^{p-1}_{L^{p/p-1}([0, \infty))}$$
and we define
$$R_1=\left\|\frac{\lambda(t)}{\mu(t)K^{ t }}\right\|_{L^{\infty}([0, \infty) )}.$$
One should view $R_p$ as an isoperimetric profile $(X,d,\mu):$ in case of a Riemannian manifold $M$, 
the natural version of $R_p$ is closely related to the parabolicity of the manifold \cite{troyanov99};
$R_p=\infty$ guarantees  parabolicity (every compact set is of relative $p$-capacity zero). This suggests
that the existence of limits for Sobolev functions along geodesics might be somehow related to finiteness
of $R_p.$ Let us say that the trace of a given function $f$, defined on $X$,  exists if 
\begin{equation}\label{intro-trace}
\Tr f(\xi):=\lim_{[0, \xi)\ni x\rarrow \xi} f(x)
\end{equation}
exists for $\nu$-a.e. $\xi\in\partial X.$ We then denote by $\Tr f$ the trace function of $f$. For other possible definitions of the trace and connections
between them see \cite{khanhzhuang}. 

Our first result gives a rather complete solution for the existence of traces in the case $\mu(X)<\infty.$

\begin{thm}\label{main-thm2}
Let $X$ be a rooted $K$-ary tree with distance $d$ and measure $\mu$. Assume $\mu(X)<\infty$. For $1 \leq p<\infty$, the following are equivalent:

\noindent (i) $R_p<\infty$.

\noindent (ii) $\Tr f$ exists for any $f\in {N}^{1, p}(X)$ and $\Tr:{N}^{1, p}(X) \rightarrow L^p_{\nu}(\bx)$ is a bounded linear operator.

\noindent (iii) $\Tr f$ exists for any $f\in {N}^{1, p}(X)$.

\noindent (iv) $ N^{1, p}_{0}(X)\subsetneq N^{1, p}(X)$.

\end{thm}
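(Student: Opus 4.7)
The plan is to prove the cycle $(i)\Rightarrow(ii)\Rightarrow(iii)\Rightarrow(i)$ together with the separate equivalence $(i)\Leftrightarrow(iv)$; the implication $(ii)\Rightarrow(iii)$ is immediate from the definition.

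For $(i)\Rightarrow(ii)$, the central tool is H\"older's inequality along infinite geodesics. For $f\in N^{1,p}(X)$ with upper gradient $g$ and any geodesic $\gamma_\xi:[0,\infty)\to X$ parametrized by combinatorial length $t$, one has
$$\int_s^t g(\gamma_\xi(r))\,\lambda(r)\,dr \le R_p^{(p-1)/p}\left(\int_s^t g(\gamma_\xi(r))^p\,\mu(r)\,K^r\,dr\right)^{1/p}.$$
A Fubini identity of the form $\int_{\bx}\int_0^\infty h(\gamma_\xi(r))\,\mu(r)\,K^r\,dr\,d\nu(\xi) \le \int_X h\,d\mu$ (valid because level $j$ of $X$ carries $K^j$ edges while $\nu(I_x) = K^{-|x|}$) shows the inner $L^p$ integral is finite for $\nu$-a.e.\ $\xi$, so the upper-gradient integral is Cauchy along $\nu$-a.e.\ geodesic, and $\Tr f(\xi) := \lim_t f(\gamma_\xi(t))$ exists. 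For the operator bound, split
$$|\Tr f(\xi)|^p \le 2^{p-1}\left(|f(0)|^p + R_p^{p-1}\int_0^\infty g(\gamma_\xi(r))^p\,\mu(r)\,K^r\,dr\right),$$
integrate in $\xi$, and absorb the gradient term into $R_p^{p-1}\|g\|_{L^p_\mu}^p$ via the same Fubini identity. The point value $|f(0)|^p$ is controlled by $\|f\|_{N^{1,p}(X)}^p$ by averaging $f(0) = f(x) - \int_{[0,x]} f'\,d|y|$ over $x$ in a neighborhood of the root and invoking H\"older with the local integrability of $\lambda^{p/(p-1)}\mu^{-1/(p-1)}$ together with $\mu(X)<\infty$.

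For $(iii)\Rightarrow(i)$, argue by contrapositive. If $R_p = \infty$, then $R_p([R,\infty)) = \infty$ for every $R$, since $R_p([0,R])$ is finite by the local integrability hypothesis. Pick inductively $0 = s_0 < s_1 < s_2 < \cdots \to \infty$ with $R_p([s_n, s_{n+1}]) = T_n$ and $\sum_n T_n^{-(p-1)} < \infty$ (take $T_n = 2^n$, say). Define a radial function $f(x) = \phi(|x|)$ where $\phi(s_{2k}) = 0$, $\phi(s_{2k+1}) = 1$, and $\phi$ is the $p$-energy minimizer $\int(|\phi'|/\lambda)^p\mu K^t\,dt$ on each $[s_n,s_{n+1}]$ subject to those endpoint values; the minimum value on the $n$-th piece equals $T_n^{-(p-1)}$. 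Then $|f|\le 1$ together with $\mu(X)<\infty$ gives $f \in L^p_\mu(X)$, while the total upper-gradient $p$-energy is $\sum_n T_n^{-(p-1)} < \infty$, so $f \in N^{1,p}(X)$. Along every infinite geodesic, however, $f$ attains both $0$ and $1$ infinitely often, so no trace exists anywhere, contradicting $(iii)$.

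For $(i)\Leftrightarrow(iv)$: if $R_p = \infty$, the same capacity computation yields compactly supported radial cutoffs $\eta_n \equiv 1$ on $\{|x| \le R_n\}$, $R_n\nearrow\infty$, with $\|g_{\eta_n}\|_{L^p_\mu}^p \to 0$, and a standard truncation-plus-cutoff argument shows $\eta_n f \to f$ in $N^{1,p}(X)$ for every $f \in N^{1,p}(X)$, hence $N^{1,p}_0(X) = N^{1,p}(X)$. Conversely, if $R_p < \infty$, then $1 \in N^{1,p}(X)$ (by $\mu(X) < \infty$) but $1 \notin N^{1,p}_0(X)$: for any compactly supported $u_n$ with $u_n \to 1$ in $N^{1,p}(X)$, the H\"older bound applied along each geodesic through $x$, averaged over the cylinder $I_x$, gives
$$|u_n(x)|^p \le R_p^{p-1}\, K^{|x|+1}\int_{\text{subtree through } x} g_{u_n-1}^p\,d\mu,$$
so $u_n \to 0$ uniformly on each $\{|x| \le N\}$, contradicting $u_n \to 1$ in $L^p_\mu(X)$. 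The main technical obstacle is the $L^p_\nu$ operator bound in $(i)\Rightarrow(ii)$: the H\"older/Fubini chain delivers existence of the trace cleanly, but the Poincar\'e-type control of $|f(0)|$ by $\|f\|_{N^{1,p}(X)}$ must be carried out carefully given only the mild assumptions on $\lambda$ and $\mu$ near the root.
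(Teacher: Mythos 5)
Your overall architecture --- the cycle $(i)\Rightarrow(ii)\Rightarrow(iii)\Rightarrow(i)$ plus the separate equivalence $(i)\Leftrightarrow(iv)$ --- and the main tools (H\"older along geodesics against the weight $K^{j(x)}$, the level-counting Fubini identity, an oscillating radial counterexample when $R_p=\infty$, radial cutoffs with vanishing gradient energy) coincide with the paper's. For $p>1$ your packaging of the two counterexample constructions as extremal condenser problems on annuli, with $R_p([s_n,s_{n+1}])=2^n$ and minimal radial $p$-energy $R_p([s_n,s_{n+1}])^{1-p}$, is a clean reformulation of what the paper does via the density $r(t)=\lambda(t)^{p/(p-1)}\mu(t)^{1/(1-p)}K^{t/(1-p)}$ (and, in Theorem~\ref{trace-example}, Villani's lemma).

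The genuine gap is at $p=1$, which the theorem includes. The statement ``the minimum value on the $n$-th piece equals $T_n^{-(p-1)}$'' together with ``$\sum_n T_n^{-(p-1)}<\infty$ (take $T_n=2^n$)'' degenerates when $p=1$: then $T_n^{-(p-1)}\equiv 1$, the energy series diverges, and no finite-energy oscillating function is produced. Moreover $R_1$ is an $L^\infty$ norm rather than an integral, so choosing $s_{n+1}$ with ``$R_1([s_n,s_{n+1}])=T_n$'' by continuity does not parse, and the same defect infects the cutoff construction in your $(iv)\Rightarrow(i)$ direction (``the same capacity computation''). The claim is still true for $p=1$, but it needs a separate argument: the minimal radial $1$-energy of the condenser on $[a,b]$ is $\bigl\|\lambda/(\mu K^{t})\bigr\|_{L^\infty([a,b])}^{-1}$, approached by concentrating the gradient on sets where $\lambda(t)/(\mu(t)K^{t})\ge 2^k$; this is exactly how the paper treats $p=1$, via the sets $A_k$, $B_{n_k}$ in Theorem~\ref{trace-example} and $E_{k_n}$ in Proposition~\ref{lemma2}. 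With that substitution your proof closes for all $1\le p<\infty$. Two smaller points you wave at are spelled out in the paper and do need the care you acknowledge: the bound $|f(0)|\lesssim\|f\|_{N^{1,p}(X)}$ near the root, and the ``standard truncation-plus-cutoff'' step, where the truncation level must be coupled to $\|\eta_n-\mathbf{1}\|_{N^{1,p}(X)}^{-1/2}$ so that the cross term $a_n\|g_{\eta_n}\|_{L^p(X)}$ tends to zero (Lemma~\ref{lemma1}).
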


In \cite{BBGS,KW19} the trace spaces of our Sobolev spaces were identified as suitable Besov-type spaces for very specific choices of $\mu,\lambda$. 

Our second result deals with the case of infinite volume.

\begin{thm}\label{main-infinity}
Let $X$ be a rooted $K$-ary tree with distance $d$ and measure $\mu$. Assume $\mu(X)=\infty$. For $1 \leq p<\infty$, the following hold:

\noindent (1) $N^{1, p}_{0}(X)=N^{1,p}(X)$.

\noindent (2) For any $f\in N^{1, p}(X)$, if $\Tr f$ exists, then $\Tr f=0$.

\noindent (3) If $p>1$, then $R_p<+\infty$ is  a sufficient condition for $\Tr f$ to exist for any $f\in N^{1, p}(X)$. Furthermore, $R_1<\infty$ if and only if $\Tr f$ exists for any $f\in {N}^{1, 1}(X)$. 
\end{thm}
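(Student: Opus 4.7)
The plan for Part~(2) is a direct contradiction argument. If $\Tr f$ exists with $\nu(\{\xi:\Tr f(\xi)\neq 0\})>0$, then by Egorov there exist $\epsilon>0$, $N\in\mathbb{N}$, and $A\subset\bx$ with $\nu(A)>0$ such that $|f(x)|\ge\epsilon$ for every $x$ on a ray $[0,\xi)$ with $\xi\in A$ and $|x|\ge N$. Let $T=\bigcup_{\xi\in A}\{x\in[0,\xi):|x|\ge N\}$. For each $j\ge N$, the number of level-$(j{+}1)$ vertices $w$ with $I_w\cap A\neq\emptyset$ is at least $K^{j+1}\nu(A)$, so
\[
\mu(T)\ge\nu(A)\sum_{j\ge N}K^{j+1}\int_j^{j+1}\mu(t)\,dt=\nu(A)\mu(\{|x|\ge N\})=\infty,
\]
because $\mu(X)=\infty$ while $\mu(\{|x|<N\})<\infty$. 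This contradicts $\int_X|f|^p\,d\mu\ge\epsilon^p\mu(T)<\infty$.

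For Part~(3) with $p>1$, the main tool is H\"older along a ray combined with a Fubini identity. Let $f\in N^{1,p}(X)$ have $p$-weak upper gradient $g\in L^p(\mu)$. Absolute continuity of $f$ on edges (from Section~2) gives
\[
|f(\gamma_\xi(t))-f(\gamma_\xi(s))|\le\int_s^t g(\gamma_\xi(r))\lambda(r)\,dr\le\biggl(\int_s^t g^p\mu K^r\,dr\biggr)^{\!1/p}R_p^{(p-1)/p}
\]
by H\"older with conjugate exponent $p/(p-1)$. Raising to the $p$-th power, integrating over $\xi\in\bx$ against $\nu$, and applying the Fubini identity $\int_\bx\int_0^\infty\phi(\gamma_\xi(r))\mu(r)K^r\,dr\,d\nu\asymp\int_X\phi\,d\mu$ (which follows from $\nu(I_x)=K^{-|x|}$) yields $\int_\bx\bigl(\int_0^\infty g\lambda\,dr\bigr)^p d\nu\lesssim R_p^{p-1}\|g\|_{L^p(\mu)}^p<\infty$, so $\int_0^\infty g(\gamma_\xi)\lambda\,dr<\infty$ for $\nu$-a.e.\ $\xi$, forcing $f(\gamma_\xi(\cdot))$ to be Cauchy. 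The $p=1$ sufficiency is analogous with H\"older replaced by the pointwise bound $\lambda\le R_1\mu K^r$. For the converse at $p=1$, if $R_1=\infty$ I would choose $r_n\to\infty$ in disjoint intervals with $\lambda(r_n)/(\mu(r_n)K^{r_n})\ge 2^n$ and build a radial $f(x)=\phi(|x|)$ equal to $1$ on a small bump at each $r_n$ and $0$ in between, with bump widths $w_n$ satisfying $\sum_n w_n K^{r_n}\mu(r_n)<\infty$. Then $f\in L^1(\mu)$, while the upper gradient norm computes as $\asymp\sum_n K^{r_n}\mu(r_n)/\lambda(r_n)\le\sum_n 2^{-n}<\infty$, so $f\in N^{1,1}(X)$; since $\phi$ oscillates without limit and $f$ is radial, $\Tr f$ fails at every $\xi$.

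Part~(1) is the most delicate. After truncating in height ($f_M=\mathrm{sign}(f)\min(|f|,M)\to f$ in $N^{1,p}$ by dominated convergence), it suffices to approximate bounded $f$ by compactly supported functions. My plan is a telescoping replacement: on each edge from a level-$(n{-}1)$ vertex $v$ to its child $w$ at level $n$, set
\[
f^{(n)}(x)=f(x)-f(w)\bigl(|x|-(n-1)\bigr),\qquad|x|\in[n-1,n],
\]
and $f^{(n)}=f$ for $|x|\le n-1$, $f^{(n)}=0$ for $|x|\ge n$. This is continuous (since $f^{(n)}(w)=0$) and compactly supported; on each edge at level $n-1$, $g_{f-f^{(n)}}(x)=|f(w)|/\lambda(|x|)$, and on $\{|x|\ge n\}$, $g_{f-f^{(n)}}=g_f$. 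The tail $\int_{|x|\ge n}g_f^p\,d\mu$ tends to $0$ by integrability. The main obstacle is to control the boundary sum $\bigl(\int_{n-1}^n\mu/\lambda^p\,d|x|\bigr)\sum_{|w|=n}|f(w)|^p$; using a weighted Poincar\'e inequality on each edge (available thanks to $\lambda^p/\mu\in L^{1/(p-1)}_{\mathrm{loc}}$) this is bounded by a constant multiple of $\int_{n-1\le|x|\le n}(|f|^p+g_f^p)\,d\mu$, and summability of these annulus integrals in $n$ then gives convergence along a subsequence $n_k\to\infty$. The hypothesis $\mu(X)=\infty$ enters precisely in ensuring that the multiplicative constants in the Poincar\'e estimate remain under control along such a subsequence, most critically in the regime where the $d$-diameter of $X$ is finite but $\mu(X)$ is not, where standard Lipschitz cut-offs in the $d$-metric fail and the telescoping construction is indispensable.
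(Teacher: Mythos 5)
Parts (2) and (3) of your proposal are essentially sound and run along the same lines as the paper: your counting argument for (2) is a direct version of the paper's Proposition \ref{trace-0} (which proves the stronger fact $\liminf_{x\to\xi}|f(x)|=0$ for a.e.\ $\xi$ and any $f\in L^p$, via the Fubini identity of Lemma \ref{lemma3.6}); your H\"older-plus-Fubini argument for (3) is exactly Theorem \ref{trace}/Corollary \ref{trace-pp}; and your oscillating radial function for the necessity of $R_1<\infty$ is the idea of Proposition \ref{no-trace-infinity}, though you should not use pointwise values $\lambda(r_n),\mu(r_n)$ of merely locally integrable weights --- the paper works with positive-measure level sets $E_k=\{t:\lambda/(\mu K^t)\ge 2^k\}$ and uses $\lambda/\mu\in L^\infty_{\rm loc}$ to show these sets escape to infinity, and it subdivides $[t_{k-1},t_k]$ to keep the $L^1$-norm of $u$ itself under control.

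The genuine gap is in Part (1). First, a repairable defect: your ramp $f(w)\bigl(|x|-(n-1)\bigr)$, linear in $|x|$, has upper gradient $|f(w)|/\lambda(|x|)$ on the edge, and $\int_{n-1}^{n}\mu\lambda^{-p}\,d|x|$ need \emph{not} be finite under the standing assumption $\lambda^p/\mu\in L^{1/(p-1)}_{\rm loc}$ (take $\lambda$ vanishing like $|t-(n-\tfrac12)|$); one must use the weight-adapted profile with density proportional to $(\lambda^p/\mu)^{1/(p-1)}$, as in the paper's Proposition \ref{lemma2}. The unrepaired defect is the control of the resulting boundary term. With the optimal profile, cutting off across the annulus $\{n-1\le|x|\le n\}$ costs $c_n\sum_{|w|=n}|f(w)|^p$ with $c_n=\bigl(\int_{n-1}^{n}(\lambda^p/\mu)^{1/(p-1)}\,dt\bigr)^{1-p}$, and your Poincar\'e inequality on the edge bounds this by $(c_n/\mu_n)\int_{\rm annulus}|f|^p\,d\mu+\int_{\rm annulus}g_f^p\,d\mu$, where $\mu_n=\int_{n-1}^n\mu\,dt$. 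The ratio $c_n/\mu_n$ depends on $\lambda$ and is completely uncontrolled by the hypothesis $\mu(X)=\infty$, which constrains only $\mu$; your assertion that ``$\mu(X)=\infty$ enters precisely in ensuring that the multiplicative constants\ldots remain under control'' is not substantiated and is not true in general. Summability of the annulus integrals gives a subsequence along which they tend to $0$, but not faster than an arbitrary prescribed rate, so it cannot beat an arbitrarily growing $c_n/\mu_n$. The paper avoids fixed-level cutoffs entirely: it uses the consequence of part (2) that $\liminf_{x\to\xi}|f(x)|=0$ along a.e.\ ray to define, on the $1$-ary tree, $f_n=\max\{0,\,f(x_n)-2\int_{[x_n,\cdot]}g_f\,ds\}$ beyond level $n$ --- a cutoff that descends at \emph{twice} the rate $f$ can vary and is therefore forced to hit zero (compact support) precisely because $f$ itself dips toward zero --- and then reduces $K\ge 2$ to this case by a branch-symmetrization that replaces $f$ on each subtree by the copy of its minimal-energy branch. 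This mechanism, in which the cutoff rides down with the function rather than being imposed on a prescribed annulus, is the missing idea in your proposal.
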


Theorem \ref{main-infinity} does not claim $R_p<\infty$ to be a necessary condition for the existence of traces. In fact, we give in Section \ref{trace-results} an example of a situation where $R_p=\infty$, $p>1$, but $\Tr f$ still exists for any $f\in N^{1, p}(X)$.

Our third result gives a complete answer in the case of homogeneous norms, see Section 2 for 
the relevant definitions. $\dot N^{1,p}_0(X)$  is the completion of the family of functions with compact support in $\dot N^{1,p}(X).$

\begin{thm}\label{main-thm1}
Let  $X$ be a rooted $K$-ary tree with distance $d$ and measure $\mu$. For $1 \leq p<\infty$, the following are equivalent:

\noindent (i) $R_p<\infty$.

\noindent (ii) $\Tr f$ exists for any $f\in \dot{N}^{1, p}(X)$ and $\Tr:\dot{N}^{1, p}(X) \rightarrow L^p_\nu(\bx)$ is a bounded linear operator.

\noindent (iii) $\Tr f$ exists for any $f\in \dot{N}^{1, p}(X)$.

\noindent (iv) $\dot N^{1, p}_{0}(X)\subsetneq\dot N^{1, p}(X)$.
\end{thm}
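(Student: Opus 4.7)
The plan is to run the cyclic implications (i) $\Rightarrow$ (ii) $\Rightarrow$ (iii) $\Rightarrow$ (i) together with (i) $\Leftrightarrow$ (iv), in close parallel with the proof of Theorem \ref{main-thm2} but modified for the fact that in the homogeneous space only the $L^p_\mu$-norm of the minimal upper gradient is at our disposal.

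For (i) $\Rightarrow$ (ii) I start from the fact, recorded in Section~2, that every $f \in \dot N^{1,p}(X)$ is absolutely continuous along each geodesic, so for $\xi \in \bx$ and $\gamma_\xi : [0,\infty) \to X$ the parametrization of $[0, \xi)$,
$$|f(\gamma_\xi(T)) - f(0)| \leq \int_0^T g(\gamma_\xi(t))\, \lambda(t) \, dt,$$
where $g$ is a minimal upper gradient. Hölder's inequality against the weight $\mu(t)^{1/p} K^{t/p}$ gives the pointwise bound $R_p^{(p-1)/p}\bigl(\int_0^\infty g(\gamma_\xi(t))^p \mu(t) K^t \, dt\bigr)^{1/p}$, and the $K$-ary branching Fubini identity
$$\int_{\bx} \int_0^\infty F(\gamma_\xi(t))\, \mu(t) K^t \, dt \, d\nu(\xi) \asymp \int_X F \, d\mu$$
(valid because at level $t$ the $K^{\lceil t \rceil}$ edges each carry $\nu$-shadow $K^{-\lceil t \rceil}$) converts the $p$-th power to $\|g\|_{L^p_\mu}^p$. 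Hence the trace exists $\nu$-a.e.\ and one obtains $\|\Tr f - f(0)\|_{L^p_\nu(\bx)} \lesssim R_p^{(p-1)/p} \|g\|_{L^p_\mu(X)}$, which amounts to (ii) after fixing a convention for the anchoring constant. The step (ii) $\Rightarrow$ (iii) is immediate.

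For (iii) $\Rightarrow$ (i) I argue contrapositively. If $R_p = \infty$, $L^{p/(p-1)}$-duality (or a direct $L^\infty$ choice when $p = 1$) produces $h \ge 0$ with $\int_0^\infty h^p \mu K^t \, dt < \infty$ and $\int_0^\infty h \lambda \, dt = \infty$. Pick levels $0 = L_0 < L_1 < \cdots$ so that each block $a_k := \int_{L_k}^{L_{k+1}} h \lambda \, dt$ equals $1$; at every vertex $v$ of level $L_k$, split the $K^{L_{k+1} - L_k}$ sub-trees rooted at its level-$L_{k+1}$ descendants into two (nearly) equal halves labelled $\pm$, and define $f$ on each edge $e$ as the $t$-primitive of $\epsilon_e \cdot h \lambda$ anchored at the root. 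The minimal upper gradient is $g(x) = h(|x|)$, so $\|g\|_{L^p_\mu}^p \asymp \int h^p \mu K^t < \infty$, i.e.\ $f \in \dot N^{1,p}(X)$. Along $\nu$-a.e.\ geodesic, the sequence $\{f(\gamma_\xi(L_k))\}_k = \bigl\{\sum_{i<k} \epsilon_i(\xi)\bigr\}_k$ consists of the partial sums of i.i.d.\ Rademacher $\pm 1$ under $\nu$, so by recurrence of simple random walk on $\mathbb{Z}$ it fails to converge on a set of positive $\nu$-measure, contradicting (iii).

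For (i) $\Leftrightarrow$ (iv): granted (i), the bounded trace from (ii) sends compactly supported functions to $0$ (so the same is true for limits in $\dot N^{1,p}$-norm), while one can explicitly build an $f \in \dot N^{1,p}(X)$ with non-constant $\Tr f$ by smoothing the indicator of a half-tree $I_{x_0}$ across a single edge, yielding (iv). Conversely, if $R_p = \infty$, minimizing the upper-gradient $L^p_\mu$-norm over radial compactly supported cutoffs equal to $1$ at the root produces a sequence $\phi_n$ whose gradient norm tends to $0$, so constants lie in $\dot N^{1,p}_0(X)$; a Leibniz/cutoff argument (multiplying $f-f(0)$ by $\phi_n$ and using the pointwise estimate $|f(x) - f(0)| \le \int_0^{|x|} g \lambda\, dt$ to absorb the cross term) then shows every $f \in \dot N^{1,p}(X)$ is a $\dot N^{1,p}$-norm limit of compactly supported functions, forcing $\dot N^{1,p}_0(X) = \dot N^{1,p}(X)$. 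I expect the main obstacle to lie in (iii) $\Rightarrow$ (i): the tree-adapted Rademacher construction must simultaneously guarantee membership in $\dot N^{1,p}$ and non-convergent oscillation $\nu$-almost surely, and for odd $K$ the halving is only approximate, requiring a minor variant of the recurrence argument; the case $p=1$ additionally needs a direct replacement for the $L^{p/(p-1)}$-duality step, using instead the unboundedness of $\lambda/(\mu K^t)$.
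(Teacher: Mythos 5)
Your proposal is correct in outline and follows the same skeleton as the paper --- the cycle (i)$\Rightarrow$(ii)$\Rightarrow$(iii)$\Rightarrow$(i) together with (i)$\Leftrightarrow$(iv) --- and two of your four implications coincide with the paper's: (i)$\Rightarrow$(ii) is exactly Theorem \ref{trace} (H\"older against the weight $\mu^{1/p}K^{t/p}$ plus the Fubini identity of Lemma \ref{lemma3.6}), and (iv)$\Rightarrow$(i) is the extremal radial cutoff construction of Proposition \ref{lemma2} / Corollary \ref{dot-N-dense} followed by the Leibniz argument of Lemma \ref{lemma1} / Corollary \ref{cor1} (the paper controls the cross term by truncating $f$ at height $\|\varphi_n-\mathbf 1\|^{-1/2}$, whereas you use the pointwise bound $|f(x)-f(0)|\le\int_0^{|x|}g\lambda\,dt$; both can be made to work, yours needing a careful choice of the outer radius $r_n$). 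You genuinely diverge in two places. For (iii)$\Rightarrow$(i), the paper (Theorem \ref{trace-example}) uses Villani's lemma to produce $h\ge 0$ with $\int_0^\infty h^p\mu K^t\,dt<\infty$ and $\int_0^\infty h\lambda\,dt=\infty$ and then takes the \emph{radial monotone} primitive, so that $f(x)\to+\infty$ along every ray; your Rademacher signing over subtrees also works, but the probabilistic apparatus is superfluous: since each block contributes an increment of modulus exactly $1$, the values at the levels $L_k$ are never Cauchy, so the limit fails for \emph{every} $\xi$ irrespective of any recurrence argument, and this also disposes of your worries about odd $K$ and the degenerate case $K=1$ (a deterministic alternating sign, or the paper's monotone choice, already suffices). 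Note that your ``$L^{p/(p-1)}$-duality'' step is precisely where the paper invokes Villani's lemma: divergence of the $L^{p/(p-1)}$-norm does not by itself hand you a single admissible $h$ without a gliding-hump-type argument, so you should cite or reproduce that. For (i)$\Rightarrow$(iv), the paper (Corollary \ref{dot-N-subset}) gives a direct capacity-type estimate showing $\mathbf 1\notin\dot N^{1,p}_0(X)$ (a compactly supported $u$ close to $\mathbf 1$ has $|u(0)|>1/2$ yet vanishes eventually along each ray, forcing $\int_{[0,\xi)}g_u\,ds\ge 1/2$ and hence a lower bound on $\|g_u\|_{L^p}$ via $R_p<\infty$); your route through the trace operator ($\Tr\equiv 0$ on $\dot N^{1,p}_0(X)$ by the boundedness from (ii), while some function has nonzero trace) is logically sound and arguably cleaner, but your chosen witness should just be $\mathbf 1$ itself, with $\Tr\mathbf 1=1$: the smoothed indicator of a half-tree with a linear-in-$|x|$ ramp has upper gradient comparable to $1/\lambda$ on a half-edge, and $\mu/\lambda^p$ need not be locally integrable under the paper's standing hypotheses, so that particular witness would require re-smoothing against the weight $\lambda^{p/(p-1)}\mu^{1/(1-p)}$.
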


Let us close this introduction with some comments on Theorem \ref{main-thm1}.  
Even though the condition $R_p=\infty$ implies $p$-parabolicity, finiteness of 
this quantity does not, in general, prevent $p$-parabolicity, see \cite{G99}. 
Hence Theorem \ref{main-thm1} and the preceding theorems are somewhat 
surprising. In fact, it follows from our results that, in the setting of this paper, $R_p=\infty$ precisely when $(X,d,\mu)$ is $p$-parabolic. See \cite{khanh} for more on this. 
Hence the reader familiar with moduli of curve families might wish to view Theorem \ref{main-thm1} as kind of a version of the equivalence between modulus and 
capacity.

Partial motivation for this paper comes from boundary value problems for the $p$-Laplace equation. For the case of manifolds see \cite{holopainen,H94} and for the setting of metric spaces see \cite{BB11,BBS,KLLS}. Classical trace results on the Euclidean spaces can be found in
\cite{Ar,Ga,HaMa,IV,KTW,MirRus,Pe,Tyu1,Tyu2}  and studies of parabolicity on infinite networks in \cite{s94,Y77}.
For trace results in the metric setting see \cite{BBGS,KW19,LXZ19,Ma,MSS}. Our second motivation comes from the recent paper \cite{ns} where a version of Theorem \ref{main-thm1} was established on regular trees for the case $p=2.$

The paper is organized as follows. In Section \ref{pre}, we introduce  regular trees, boundaries of trees and Newtonian spaces on our trees. We study the trace results in Section \ref{trace-results} and the density results are given in Section \ref{density-results}. In Section \ref{proofs}, we give the  proofs of Theorems \ref{main-thm2}--\ref{main-thm1}.

\section{Preliminaries}\label{pre}
Throughout this paper, the letter $C$ (sometimes with a subscript) will denote positive constants
that usually depend only on the space and may change at
different occurrences; if $C$ depends on $a,b,\ldots$, we write
$C=C(a,b,\ldots)$.
The notation $A\approx B$ means that there is a constant
$C$ such that $1/C\cdot A\leq B\leq C\cdot A$. The notation $A\lesssim B$
($A\gtrsim B$) means that there is a constant $C$ such that
$A\leq  C\cdot B$ ($A\geq C\cdot B$).
\subsection{Regular trees and their boundaries}\label{s-regular}
A {\it graph} $G$ is a pair $(V, E)$, where $V$ is a set of vertices and $E$ is a set of edges.   We call a pair of vertices $x, y\in V$  neighbors if $x$ is connected to $y$ by an edge. The degree of a vertex is the number of its neighbors. The graph structure gives rise to a natural connectivity structure. A {\it tree} is a connected graph without cycles. A graph (or tree) is made into a metric graph by considering each edge as a geodesic of length one.

We call a tree $X$ a {\it rooted tree} if it has a distinguished vertex called the {\it root}, which we will denote by $0$. The neighbors of a vertex $x\in X$ are of two types: the neighbors that are closer to the root are called {\it parents} of $x$ and all other  neighbors  are called {\it children} of $x$. Each vertex has a unique parent, except for the root itself that has none. 

A {\it $K$-ary tree}  is a rooted tree such that each vertex has exactly $K$ children. Then all vertices except the root  of  a $K$-ary tree have degree $K+1$, and the root has degree $K$. In this paper we say that a tree is {\it regular} if it is a $K$-ary tree for some $K\geq 1$.

For $x\in X$, let $|x|$ be the distance from the root $0$ to $x$, that is, the length of the geodesic from $0$ to $x$, where the length of every edge is $1$ and we consider each edge to be an isometric copy of the unit interval. The geodesic connecting two vertices $x, y\in V$ is denoted by $[x, y]$, and its length is denoted $|x-y|$. If $|x|<|y|$ and $x$ lies on the geodesic connecting $0$ to $y$, we write $x<y$ and call the vertex $y$ a descendant of the vertex $x$. More generally, we write $x\leq y$ if the geodesic from $0$ to $y$ passes through $x$, and in this case $|x-y|=|y|-|x|$.

On the $K$-regular tree $X$, for any $n\in \N$, let $X^n$ be a subset of $X$ by setting
\[X^n:=\{x\in X: |x|<n\}.\]

On the $K$-regular tree $X$, we define the metric $ds$ and measure $d\mu$ by setting
\begin{equation*}
d\mu=\mu(|x|)\,d|x|,\ \  ds(x)=\lambda(|x|)\, d|x|,
\end{equation*}
where $\lambda, \mu:[0, \infty)\rightarrow (0, \infty)$ with $\lambda, \mu\in L^1_{\rm loc}([0, \infty))$.   Throughout this paper, we let $1\leq p<\infty$ and  assume additionally that $\lambda^p/\mu\in L^{1/(p-1)}_{\rm loc}([0, \infty))$ if $p>1$ and that $\lambda/\mu\in L^{\infty}_{\rm loc}([0, \infty)$ if $p=1$. 
Here $d\,|x|$ is the measure which gives each edge Lebesgue measure $1$, as we consider each edge to be an isometric copy of the unit interval and the vertices are the end points of this interval. Hence for any two points $z, y\in X$, the distance between them is 
\[d(z, y)=\int_{[z, y]} \,ds(x)=\int_{[z, y]}\lambda(|x|)\, d|x|,\]
where $[z, y]$ is the unique geodesic from $z$ to $y$ in $X$.

We abuse the notation and let $\mu(x)$ and $\lambda(x)$ denote $\mu(|x|)$ and $\lambda(|x|)$,respectively, for any  $x\in X$, if there is no danger of confusion.

Next we construct the boundary of the regular $K$-ary tree.  An element $\xi$ in $\bx$ is identified with an infinite geodesic in $X$ starting at the root $0$. Then we may denote $\xi=0x_1x_2\cdots$, where $x_i$ is a vertex in $X$ with $|x_i|=i$, and $x_{i+1}$ is a child of $x_i$. Given two points $\xi, \zeta\in \bx$, there is an infinite geodesic $[\xi, \zeta]$ connecting $\xi$ and $\zeta$. 

 To avoid confusion, points in $X$ are denoted by Latin letters such as $x, y$ and $z$, while for points in $\bx$ we use Greek letters such as $\xi, \zeta$ and $\omega$. 

We equip $\bx$ with the natural probability measure $\nu$ as in Falconer \cite{F97} by distributing the unit mass uniformly on $\bx$. For any $x\in X$ with $|x|=j$, if { we denote by $I_x$ the set }
$$\{\xi\in \bx: \text{the geodesic $[0, \xi)$ passes through $x$}\},$$
then the measure of $I_x$ is $K^{-j}.$ We refer to \cite[Lemma 5.2]{BBGS} for a more information on our boundary measure $\nu$.
\subsection{Newtonian spaces}\label{newtonian}
Let $X$ be a $K$-regular tree with metric and measure defined as in Section \ref{s-regular}. Let $\mathscr M$ denote the family of all nonconstant rectifiable curves in $X$.  We recall the definition of $p$-modulus of curve families in $\mathscr M$, see \cite{H03,pekka} for more detailed discussions.
\begin{defn}\rm
For $\Gamma\subset \mathscr M$, let $F(\Gamma)$ be the family of all Borel measurable functions $\rho: X\rightarrow [0, \infty]$ such that
\[\int_\gamma g\, ds\geq 1\ \ \text{for every} \ \gamma\in \Gamma.\]
For $1\leq p<\infty$, we define
\[\Mod_p(\Gamma)=\inf_{\rho\in F(\Gamma)} \int_X \rho^p\, d\mu.\]
The number $\Mod_p(\Gamma)$ is called the {\it $p$-modulus} of the family $\Gamma$.
\end{defn}

\begin{prop}\label{modulus-0}
Let $1\leq p<\infty$. On the $K$-regular tree $X$, {the empty family is the only curve family with zero $p$-modulus.} 
\end{prop}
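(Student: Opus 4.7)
By monotonicity of the $p$-modulus in the family ($\Gamma_1 \subset \Gamma_2 \Rightarrow \Mod_p(\Gamma_1) \le \Mod_p(\Gamma_2)$), it suffices to prove $\Mod_p(\{\gamma\})>0$ for a single nonconstant rectifiable curve $\gamma \in \Gamma$. Parameterize such $\gamma:[0,L]\to X$ by $d$-arc length, so that $\gamma$ is $1$-Lipschitz and $L>0$; its image $T_0$ is a compact subtree of $X$ contained in some $X^N$. Because $\gamma$ is nonconstant and $X$ is a tree, $T_0$ contains a nondegenerate geodesic segment $I=[p,q]$ lying in the interior of a single edge of $X$.

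My plan is to extract from $\gamma$, using the tree structure, an injective sub-arc $\tilde\gamma$ whose image is a sub-segment of $I$, with the property that the admissibility bound transfers quantitatively: for any $\rho \in F(\{\gamma\})$, one still has $\int_{\tilde\gamma}\rho\,ds \gtrsim 1$ with an implicit constant depending only on $\gamma$. Granted this, for $p>1$ I would apply H\"older's inequality to $\int_{\tilde\gamma}\rho\,ds=\int_{I}\rho\lambda\,d|x|$ with the splitting $\rho\lambda=(\rho\mu^{1/p})\cdot(\lambda\mu^{-1/p})$, obtaining
\[
1 \lesssim \Bigl(\int_{I}\rho^{p}\,d\mu\Bigr)^{1/p}\Bigl(\int_{I}\frac{\lambda^{p/(p-1)}}{\mu^{1/(p-1)}}\,d|x|\Bigr)^{(p-1)/p},
\]
in which the second factor is finite by the standing assumption $(\lambda^{p}/\mu)^{1/(p-1)}\in L^{1}_{\rm loc}$ and the compactness of $I$. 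Since $\int_{I}\rho^{p}\,d\mu\le\int_{X}\rho^{p}\,d\mu$, rearranging produces the uniform lower bound $\int_{X}\rho^{p}\,d\mu\ge c(\gamma)>0$, which forces $\Mod_{p}(\{\gamma\})>0$. For $p=1$ the H\"older step is replaced by the pointwise bound $\lambda\le\|\lambda/\mu\|_{L^{\infty}([0,N])}\cdot\mu$, available under the hypothesis $\lambda/\mu\in L^{\infty}_{\rm loc}$, and the argument proceeds similarly.

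The main technical obstacle is the extraction of the injective sub-arc $\tilde\gamma$ and the quantitative propagation of the admissibility condition from $\gamma$ to $\tilde\gamma$. Here the tree structure is essential: since a continuous path between two distinct points in a tree must contain the unique geodesic between them, the counting function $N_\gamma$ is at least $1$ on the chosen geodesic segment $I$. Combined with rectifiability, this controls the ``backtracking'' of $\gamma$ across $I$ and thereby bounds the loss incurred when passing from $\int_\gamma \rho\,ds \ge 1$ to the restricted line integral $\int_{\tilde\gamma}\rho\,ds$, which is what closes the argument.
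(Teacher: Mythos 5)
Your opening reduction (monotonicity of $\Mod_p$ to a single curve $\gamma$) and your endgame (H\"older with the splitting $\rho\lambda=(\rho\mu^{1/p})(\lambda\mu^{-1/p})$ for $p>1$, the $L^\infty_{\rm loc}$ bound for $p=1$) coincide with the paper's. The gap is the middle step, which you yourself flag as the crux, and it cannot be closed as proposed: there is no sub-arc $\tilde\gamma$ of $\gamma$, fixed in advance with image a sub-segment $I$ of a single edge, for which $\int_{\tilde\gamma}\rho\,ds\gtrsim 1$ holds uniformly over all $\rho\in F(\{\gamma\})$. Whenever $|\gamma|$ is not already contained in $I$ (the generic case, e.g.\ a geodesic crossing two edges), pick a nondegenerate segment $J\subset|\gamma|\setminus I$ and set $\rho=\chi_J/\int_J\lambda\,d|x|$; then $\int_\gamma\rho\,ds\ge 1$ while $\int_{\tilde\gamma}\rho\,ds=0$, so no constant depending only on $\gamma$ exists. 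The justification you offer --- that the counting function $N_\gamma$ is at least $1$ on $I$ because a path in a tree must cover the geodesic between its endpoints --- proves the inequality $\int_\gamma\rho\,ds\ge\int_I\rho\,ds$, which points the wrong way: it shows that every function admissible for $I$ is admissible for $\gamma$, hence $\Mod_p(\{\gamma\})\le\Mod_p(\{I\})$, an \emph{upper} bound that can never yield the desired lower bound $\Mod_p(\{\gamma\})>0$.

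The repair is not to localize to a single edge at all. The paper keeps the admissibility inequality over the whole curve: after passing to a subcurve lying on a geodesic ray and truncating to $\{|x|\le N\}$ while retaining $\int_\gamma\rho\,ds\ge 1/2$, it applies H\"older directly to $\tfrac12\le\int_\gamma\rho\,ds=\int_\gamma\rho\,\frac{\lambda}{\mu}\,d\mu$ and bounds the conjugate factor by $\int_0^N\bigl(\lambda(t)^p/\mu(t)\bigr)^{1/(p-1)}\,dt<\infty$; that is, the standing local integrability hypothesis is invoked over the entire range $[0,N]$ swept by the curve rather than over one pre-selected segment. Your own observation that $|\gamma|$ is a compact subtree contained in some $X^N$ with finitely many edges is precisely what makes this global conjugate integral finite, so your H\"older computation goes through once it is run over all of $\gamma$ (or over an injective reduction of it) instead of over $I$. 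What cannot be salvaged is the claimed quantitative propagation of $\int\rho\,ds\gtrsim 1$ to a fixed proper sub-arc; any correct argument must use the admissibility of $\rho$ on the full curve.
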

\begin{proof}
First, it follows from the definition that the $p$-modulus of the empty family is zero. 

For a given non-empty curve family $\Gamma$, let $\gamma\in \Gamma$ be 
a rectifiable curve. By picking a subcurve if necessary, we may assume that $\gamma$ is a part of a geodesic ray. Then it follows from \cite[Theorem 5.2 and Lemma 5.3]{H03} that $\Mod_p(\Gamma)\geq \Mod_p(\{\gamma\})$.

For any Borel measurable function $\rho\in F(\{\gamma\})$, we have $\int_\gamma \rho\, ds\geq 1$. By the monotone convergence theorem, we may assume that $\int_\gamma \rho\, ds\geq 1/2$ for a subcurve, still denoted $\gamma,$ that is contained in $\{x\in X: |x|\leq N\}.$
Notice that 
$$ds(x)=\frac{\lambda(x)}{\mu(x)} \, d\mu(x).$$
For $p>1$, it follows from the H\"older inequality that
\begin{align*}
\int_\gamma \rho\, ds=\int_\gamma \rho \frac{\lambda}{\mu} \, d\mu &\leq \left(\int_\gamma \rho^p\, d\mu\right)^{1/p}\left(\int_\gamma \frac{\lambda^{p/(p-1)}}{\mu^{p/(p-1)}}\, d\mu\right)^{(p-1)/p}\\
&\leq C(N, p, \lambda, \mu, N) \left(\int_\gamma \rho^p\, d\mu\right)^{1/p},
\end{align*}
{since it follows from $\lambda^p/\mu\in L^{1/(p-1)}_{\rm loc}([0, \infty))$ that 
\[\int_\gamma \frac{\lambda^{p/p-1}}{\mu^{p/p-1}}\, d\mu \leq \int_{0}^{N}\left(\frac{\lambda(t)^{p}}{\mu(t)}\right)^{\frac{1}{p-1}}\, dt<\infty.\] }
Hence we have that 
\begin{equation}\label{Mod_p}
\int_X \rho^p\, d\mu\geq \int_\gamma \rho^p\, d\mu\geq C(N, p, \lambda, \mu) \left(\int_\gamma \rho\, ds\right)^p\geq C(N, p, \lambda, \mu)/2>0.
\end{equation}
For the case $p=1$, by a similar argument without using the H\"older inequality, it follows from $\lambda/\mu\in L^\infty_{\rm loc}([0, \infty))$ that 
\begin{equation}\label{Mod_1}
\int_X \rho\, d\mu\geq C(N, \lambda, \mu)>0.
\end{equation}
Thus,
\[\Mod_p(\Gamma)\geq \Mod_p(\{\gamma\}) >0\ \ \text {for}\ 1\leq p<\infty,\]
which finished the proof.
\end{proof}

Let $u\in L_{\rm loc}^1(X)$. We say that a Borel function $g: X\rarrow [0, \infty]$ is an {\it upper gradient} of $u$ if  
\begin{equation}\label{gradient}|u(z)-u(y)|\leq \int_{\gamma} g\, ds
\end{equation}
whenever $z, y\in X$ and $\gamma$ is the geodesic from $z$ to $y$. In the setting of a tree any rectifiable curve with end points $z$ and $y$ contains the geodesic connecting $z$ and $y$, and therefore the upper gradient defined above is equivalent to the definition which requires that inequality \eqref{gradient} holds for all rectifiable curves with end points $z$ and $y$. 
In \cite{H03,pekka}, the notion $p$-weak upper gradient is given. A Borel function $g: X\rarrow [0, \infty]$ is called a $p$-weak upper gradient of $u$ if \eqref{gradient} holds on $p$-a.e. curves $\gamma\in \mathscr M$, i.e., \eqref{gradient} holds for all curves $\gamma\in \mathscr M\setminus \Gamma$, where $\Mod_p(\Gamma)=0$.
 Notice that by Proposition \ref{modulus-0}, any $p$-weak upper gradient is actually an upper gradient here. We refer to \cite{H03,pekka} for more information about $p$-weak upper gradients.

The notion of upper gradients is due to Heinonen and Koskela \cite{HK98}; we refer interested readers to \cite{BB11,H03,pekka,N00} for a more detailed discussion on upper gradients.

The {\it Newtonian space} $N^{1, p}(X)$, $1\leq p<\infty$, is defined as the collection of all the functions $u$ for which 
$$\|u\|_{N^{1, p}(X)}:= \|u\|_{L^p(X)}+\inf_g \|g\|_{L^p(X)}<\infty,$$
where the infimum is taken over all upper gradients of $u$. If $u\in N^{1, p}(X)$, then it has a minimal $p$-weak upper gradient, which is an upper gradient in our case (by Proposition \ref{modulus-0}). We denote by $g_u$ the minimal upper gradient, which is unique up to measure zero and which is minimal in the sense that if $g\in L^p(X)$ is any upper gradient of $u$ then $g_u\leq g$ a.e. We refer to \cite[Theorem 7.16]{H03} for proofs of the existence and uniqueness of such minimal upper gradient. Throughout this paper, we denote by $g_u$ the (minimal) upper gradient of $u$.

By Proposition \ref{modulus-0}, it follows from \cite[Definition 7.2 and Lemma 7.6]{H03} that any function $u\in L^1_{\rm loc}(X)$ with an upper gradient $0\leq g\in L^p(X)$  is locally absolutely continuous, for example, absolutely continuous on each edge. Moreover, the ``classical" derivative $u'$ of this locally absolutely continuous function is a minimal upper gradient in the sense that $g_u=|u'(x)|/\lambda(x)$ when $u$ is parametrized in the nature way.

We define the {\it homogeneous Newtonian spaces} $\dot{N}^{1, p}(X)$, $1\leq p<\infty$, the collection of all the continuous functions $u$ that have an
upper gradient $0\leq g\in L^p(X)$, for which the homogeneous $\dot{N}^{1, p}$-norm of $u$ defined as
$$\|u\|_{\dot N^{1, p}(X)}:= |u(0)|+\inf_g  \|g\|_{L^p(X)}$$
is finite. Here $0$ is the root of the regular tree $X$ and the infimum is taken over all upper gradients of $u$.

\section{Trace results}\label{trace-results}
In this section, if we do not specifically mention, we always assume that $X$ is a $K$-regular tree with measure and metric as in Section \ref{s-regular}. 

\begin{lem}\label{lemma3.6}
Let $1 \leq p<\infty$. For any $f\in L^p(X)$, we have that
$$\int_\bx\int_{[0, \xi)}|f(x)|^p K^{j(x)}\, d\mu(x)\, d\nu(\xi) \approx \int_{X} |f(x)|^p\, d\mu(x),$$
where $j(x)$ is the largest integer such that $j(x)\leq |x|$.
\end{lem}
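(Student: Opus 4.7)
The plan is to prove the equivalence by applying Fubini's theorem to swap the order of integration, and then computing the $\nu$-measure of the set of boundary points whose geodesics pass through a fixed point $x\in X$.

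First, I would rewrite the left-hand side as
\begin{equation*}
\int_{\bx}\int_X |f(x)|^p K^{j(x)} \mathbf{1}_{\{x\in[0,\xi)\}}\, d\mu(x)\, d\nu(\xi).
\end{equation*}
Since the integrand is nonnegative and measurable, Fubini's theorem applies and yields
\begin{equation*}
\int_X |f(x)|^p K^{j(x)} \nu\bigl(\{\xi\in\bx : x\in[0,\xi)\}\bigr)\, d\mu(x).
\end{equation*}

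The second step is to compute, for $x\in X$, the measure of $E_x:=\{\xi\in\bx: x\in[0,\xi)\}$. If $x$ is a vertex with $|x|=j(x)$, then by definition $E_x = I_x$, so $\nu(E_x) = K^{-j(x)}$. If $x$ lies in the interior of an edge, let $w$ be the unique child vertex of that edge, so that $|w| = j(x)+1$; then $x\in[0,\xi)$ if and only if $w\in[0,\xi)$, hence $E_x = I_w$ and $\nu(E_x)=K^{-(j(x)+1)}$. In either case,
\begin{equation*}
K^{-j(x)-1} \le \nu(E_x) \le K^{-j(x)}.
\end{equation*}

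Plugging this into the Fubini identity, the product $K^{j(x)}\nu(E_x)$ is bounded between $K^{-1}$ and $1$, so
\begin{equation*}
\tfrac{1}{K}\int_X |f(x)|^p\, d\mu(x) \le \int_X |f(x)|^p K^{j(x)} \nu(E_x)\, d\mu(x) \le \int_X |f(x)|^p\, d\mu(x),
\end{equation*}
which is exactly the claimed two-sided comparison.

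I do not expect any real obstacle here; the only point requiring care is the case distinction between $x$ being a vertex and $x$ being in the interior of an edge, which contributes only a bounded multiplicative factor depending on $K$ and does not affect the $\approx$ comparison.
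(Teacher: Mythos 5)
Your proposal is correct and follows essentially the same route as the paper: apply Fubini/Tonelli to the nonnegative integrand and then use $\nu(I_x)\approx K^{-j(x)}$, the only cosmetic difference being that you make the vertex-versus-interior-of-edge case distinction explicit where the paper absorbs it into the $\approx$. No gaps.
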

\begin{proof}
Let $f\in L^p(X)$. For any $\xi\in \bx$, let $x_j=x_j(\xi)$ be the ancestor of $\xi$ with $|x_j|=j$. Then it follows from Fubini's Theorem that
\begin{align*}
\int_\bx\int_{[0, \xi)}|f(x)|^p K^{j(x)}\, d\mu(x)\, d\nu(\xi)&=\int_\bx \sum_{j=0}^{+\infty} \int_{[x_j(\xi), x_{j+1}(\xi)]}|f(x)|^p K^{j}\,d\mu(x)\, d\nu(\xi)\\
&=\int_X |f(x)|^p  \int_\bx \sum_{j=0}^{+\infty} K^{j}\chi_{[x_j(\xi), x_{j+1}(\xi)]}(x) \, d\nu(\xi)\, d\mu(x).
\end{align*}
Note that $\chi_{[x_{j}(\xi), x_{j+1}(\xi)]}(x)$ is nonzero only if $j\leq |x|\leq j+1$ and $x<\xi$. Thus the above equality can be rewritten as
$$\int_\bx\int_{[0, \xi)}|f(x)|^p K^{j(x)}\, d\mu(x)\, d\nu(\xi)=\int_X |f(x)|^p K^{j(x)} \nu(I_x)\, d\mu(x),$$
where $I_x=\{\xi\in \bx: x<\xi\}$. Since $\nu(I_x)\approx K^{-j(x)}$, we obtain that
$$\int_\bx\int_{[0, \xi)}|f(x)|^p K^{j(x)}\, d\mu(x)\, d\nu(\xi) \approx \int_{X} |f(x)|^p\, d\mu(x).$$
\end{proof}

\begin{thm}\label{trace}
 Let $1 \leq p<\infty$ and assume that  $R_p<+\infty$.
 Then the trace $\Tr$ in \eqref{intro-trace} gives a bounded linear operator $\Tr: \dot{N}^{1, p}(X)\rightarrow L^p(\bx)$.
 \end{thm}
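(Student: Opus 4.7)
The plan is to reduce everything to a one-dimensional calculation along each geodesic ray $[0,\xi)$ and then use Fubini (via Lemma~\ref{lemma3.6}) to pass to a global $L^p$-bound. Fix $f\in\dot N^{1,p}(X)$ and let $g_f\in L^p(X)$ be its minimal upper gradient. Since $f$ is continuous on $X$ and (by the discussion in Section~\ref{newtonian}) absolutely continuous on each edge, the upper gradient inequality applied along the unique geodesic connecting any two points $x,y\in[0,\xi)$ gives
\[
|f(x)-f(y)|\le \int_{[x,y]} g_f\,ds
=\int_{\min(|x|,|y|)}^{\max(|x|,|y|)}\! g_f(z_\xi(t))\,\lambda(t)\,dt,
\]
where $z_\xi(t)$ denotes the point of $[0,\xi)$ at distance $t$ from the root. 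Therefore, once we know that
\[
L(\xi):=\int_0^{\infty} g_f(z_\xi(t))\,\lambda(t)\,dt<\infty
\]
for $\nu$-a.e.~$\xi$, the Cauchy criterion forces the limit $\Tr f(\xi)=\lim_{x\to\xi}f(x)$ to exist and to satisfy $|\Tr f(\xi)-f(0)|\le L(\xi)$.

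Next I would control $L(\xi)$ by $g_f$ using $R_p<\infty$. For $p>1$, I would split the integrand against the weight $(\mu(t)K^t)^{1/p}$ and apply H\"older's inequality:
\[
L(\xi)=\int_0^{\infty}\! \bigl(g_f(z_\xi(t))\,\mu(t)^{1/p}K^{t/p}\bigr)\cdot \frac{\lambda(t)}{\mu(t)^{1/p}K^{t/p}}\,dt
\le R_p^{(p-1)/p}\left(\int_0^{\infty}\! g_f(z_\xi(t))^p\,\mu(t)\,K^t\,dt\right)^{1/p},
\]
by the very definition of $R_p$. For $p=1$, the corresponding estimate is immediate from the $L^\infty$-definition of $R_1$, namely $L(\xi)\le R_1\int_0^{\infty} g_f(z_\xi(t))\mu(t)K^t\,dt$.

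Now I would integrate $L(\xi)^p$ against $\nu$ on $\partial X$ and use Lemma~\ref{lemma3.6} applied to the function $g_f$ (noting $K^t\approx K^{j(t)}$ along a geodesic) to turn the inner integral into $\|g_f\|_{L^p(X)}^p$ up to a multiplicative constant. The outcome is
\[
\int_{\partial X} L(\xi)^p\,d\nu(\xi)\;\lesssim\; R_p^{p-1}\,\|g_f\|_{L^p(X)}^p\qquad(\text{with the obvious modification for }p=1).
\]
In particular $L(\xi)<\infty$ for $\nu$-a.e.~$\xi$, so the trace exists on a set of full measure, and combining with $|\Tr f(\xi)-f(0)|\le L(\xi)$ one gets the boundedness
\[
\|\Tr f\|_{L^p(\partial X)}\;\lesssim\; |f(0)|+R_p^{(p-1)/p}\|g_f\|_{L^p(X)}\;\lesssim\;\|f\|_{\dot N^{1,p}(X)}.
\]
Linearity of $\Tr$ is immediate since the pointwise limit preserves linear combinations.

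I do not foresee a true obstacle here: the only subtlety is bookkeeping with the weights $\mu(t)K^t$ so that the H\"older split lines up exactly with the definition of $R_p$, and checking that Lemma~\ref{lemma3.6} is applicable (which is why one writes $K^t$ rather than $K^{j(t)}$ in the intermediate step and uses $K^t\approx K^{j(t)}$). Everything else is a routine application of Fubini and the absolute continuity of $f$ along each geodesic ray.
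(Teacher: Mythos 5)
Your proposal is correct and follows essentially the same route as the paper: both control the trace by the majorant $|f(0)|+\int_{[0,\xi)}g_f\,ds$, apply H\"older's inequality (resp.\ the $L^\infty$ bound for $p=1$) with the weight $(\mu(t)K^{t})^{1/p}$ to bring in $R_p$, and then integrate over $\partial X$ using Lemma~\ref{lemma3.6} together with $K^{t}\approx K^{j(t)}$ to recover $\|g_f\|_{L^p(X)}$. No gaps.
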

\begin{proof}
Let $f\in \dot{N}^{1,p}(X)$.  Our task is to show that 
\begin{equation}\label{trace-operator}
\Tr f(\xi):=\tilde f(\xi)=\lim_{[0, \xi)\ni x\rarrow \xi} f(x),
\end{equation}
exists for $\nu$-a.e. $\xi\in \bx$ and that the trace $\Tr f$ satisfies the norm estimates.

To show that the limit in \eqref{trace-operator} exists for $\nu$-a.e. $\xi\in \bx$, it suffices to show that the function 
\begin{equation}\notag
{\tilde f}^*(\xi)=|f(0)|+\int_{[0, \xi)} g_f\, ds
\end{equation}
is in $L^p(\bx)$, where $[0, \xi)$ is the geodesic ray from $0$ to $\xi$ and $g_f$ is an upper gradient of $f$. To be more precise, if $\tilde f^*\in L^p(\bx)$, we have $|\tilde f^*|<\infty$ for $\nu$-a.e. $\xi\in \bx$, and hence the limit in $\eqref{trace-operator}$ exists for $\nu$-a.e. $\xi\in\bx$.

Since we have
\[ds =\frac{\lambda(x)}{\mu(x)}\, d\mu,\]
 we obtain the estimate
\begin{align}
{\tilde f}^*(\xi) &=|f(0)|+\int_{[0, \xi)} g_f\, ds =|f(0)|+\int_{[0, \xi)} g_f \frac{\lambda(x)}{\mu(x)}\, d\mu. \label{estimate-f}
\end{align}

For $p>1$, it follows from the H\"older inequality  that
\begin{align*}
|{\tilde f}^*(\xi)|^p &\lesssim |f(0)|^p+\left(\int_{[0, \xi)} {g_f} K^{j(x)/p} \frac{\lambda(x)}{\mu(x)K^{j(x)/p}}  \, d\mu\right)^p\\
& \leq |f(0)|^p+ \int_{[0, \xi)} {g_f}^p K^{j(x)}\, d\mu \left(\int_{[0, \xi)} \left(\frac{\lambda(x)}{\mu(x)K^{j(x)/p}}\right)^{\frac{p}{p-1}}\, d\mu\right)^{p-1}\\
&\leq |f(0)|^p + {R_p}^{p-1} \int_{[0, \xi)} {g_f}^p K^{j(x)}\, d\mu,
\end{align*}
where $j(x)$ is the largest integer such that $j(x)\leq |x|$. Here the last inequality holds since
\[\int_{[0, \xi)} \left(\frac{\lambda(x)}{\mu(x)K^{j(x)/p}}\right)^{\frac{p}{p-1}}\, d\mu\approx \int_{0}^\infty \frac{\lambda(t)^{\frac{p}{p-1}}}{\mu(t)^{\frac{p}{p-1}}K^{\frac{t}{p-1}}} \mu(t)\, dt=R_p.\]

Integrating over all $\xi\in \bx$, since $\nu(\bx)= 1$, $R_p<+\infty$ and $g_f\in L^p(X)$, it follows from Lemma \ref{lemma3.6} that
\begin{align}
\int_\bx |{\tilde f}^*(\xi)|^p\, d\nu &\lesssim |f(0)|^p+\int_\bx \int_{[0, \xi)} {g_f(x)}^p K^{j(x)}\, d\mu(x)\, d\nu(\xi)\notag\\
&\lesssim |f(0)|^p+ \int_X g_f(x)^p\, d\mu(x), \ \ p>1.\label{eq1}
\end{align}

For $p=1$, integrating over all $\xi\in\bx$ with respect to estimate \eqref{estimate-f}, since $\nu(\bx)=1$, we obtain by means of Fubini's theorem that
\begin{align*}
\int_{\bx} |\tilde f^*(\xi)|\, d\nu &\leq |f(0)| + \int_{\bx} \int_{X} g_f(x) \chi_{[0,\xi)}(x)\frac{\lambda(x)}{\mu(x)} \, d\mu(x)\, d\nu(\xi)\\
& =|f(0)| +\int_X g_f(x) \frac{\lambda(x)}{\mu(x)} \int_\bx \chi_{[0,\xi)}(x) \, d\nu(\xi)\, d\mu(x)\\
& = |f(0)| +\int_X g_f(x) \lambda(x)\mu(x)^{-1} \nu(I_x)\, d\mu(x).
\end{align*}
Here in the above estimates, the notations $I_x$ and $j(x)$ are the same ones as those we used in Lemma \ref{lemma3.6}. Since $\nu(I_x)\approx K^{-j(x)}\approx K^{-|x|}$ and $R_1<+\infty$, we further obtain that
\begin{equation}\label{eq2}
\int_{\bx} |\tilde f^*(\xi)|\, d\nu\leq |f(0)| + R_1 \int_X g_f(x) \, d\mu(x)\lesssim |f(0)| + \int_X g_f(x) \, d\mu(x).
\end{equation}

Hence we obtain from estimates \eqref{eq1} and \eqref{eq2} that ${\tilde f}^*$ is in $L^p(\bx)$ for $1 \leq p<\infty$, which gives the existence of the limits in \eqref{trace-operator} for $\nu$-a.e. $\xi\in\bx$. In particular, since $|\tilde f|\leq {\tilde f}^*$, we have the estimate
$$\int_\bx |\tilde f|^p\, d\nu\lesssim |f(0)|^p + \int_X {g_f}^p\, d\mu,$$
and hence the norm estimate
\begin{equation}\notag
\|\tilde f\|_{L^p(\bx)}\lesssim |f(0)|+\left( \int_X {g_f}^p\, d\mu\right)^{1/p}=\|f\|_{\dot{N}^{1, p}(X)}.
\end{equation}
\end{proof}

Since every $f\in N^{1, p}(X)$ is locally absolutely continuous, a direct computation gives the estimate $|f(0)|\lesssim \|f\|_{N^{1, p}(X)}$. Hence we obtain 
the following result from the above theorem.

\begin{cor}\label{trace-pp}
 Let $1 \leq p<\infty$ and assume that  $R_p<+\infty$.
 Then the trace $\Tr$ in  \eqref{intro-trace} gives a bounded linear operator $\Tr: {N}^{1, p}(X)\rightarrow L^p(\bx)$ .
\end{cor}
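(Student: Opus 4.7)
The plan is to derive the corollary directly from Theorem \ref{trace}, so the only real work is passing from the homogeneous norm $\|f\|_{\dot N^{1,p}(X)} = |f(0)| + \|g_f\|_{L^p(X)}$ to the inhomogeneous norm $\|f\|_{N^{1,p}(X)} = \|f\|_{L^p(X)} + \|g_f\|_{L^p(X)}$. Since every $f \in N^{1,p}(X)$ has an upper gradient in $L^p(X)$, Theorem \ref{trace} already yields that $\Tr f$ exists $\nu$-a.e.\ and
\[
\|\Tr f\|_{L^p(\bx)} \lesssim |f(0)| + \|g_f\|_{L^p(X)}.
\]
So it suffices to establish the pointwise bound $|f(0)| \lesssim \|f\|_{N^{1,p}(X)}$ claimed just before the corollary.

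To prove this bound I would fix a compact reference set near the root, say the set $E$ consisting of the first halves of all edges emanating from $0$, and use absolute continuity of $f$ along each such edge. For any $x \in E$,
\[
|f(0)| \leq |f(x)| + \int_{[0,x]} g_f\,ds = |f(x)| + \int_{[0,x]} g_f(y)\,\frac{\lambda(y)}{\mu(y)}\,d\mu(y).
\]
For $p>1$, applying H\"older's inequality bounds the integral by $\|g_f\|_{L^p(X)}$ times a factor of the form $\bigl(\int_0^{1} (\lambda^p/\mu)^{1/(p-1)}\,dt\bigr)^{(p-1)/p}$, which is finite by the standing assumption $\lambda^p/\mu \in L^{1/(p-1)}_{\rm loc}([0,\infty))$. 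For $p=1$ the same estimate holds with the factor $\|\lambda/\mu\|_{L^{\infty}([0,1])}$, finite by the standing $p=1$ assumption. Thus $|f(0)| \lesssim |f(x)| + \|g_f\|_{L^p(X)}$ uniformly for $x\in E$.

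Finally I would raise this to the $p$-th power and average over $E$ with respect to $d\mu$:
\[
|f(0)|^p \lesssim \frac{1}{\mu(E)}\int_E |f(x)|^p\,d\mu(x) + \|g_f\|_{L^p(X)}^p \lesssim \|f\|_{L^p(X)}^p + \|g_f\|_{L^p(X)}^p,
\]
since $\mu(E)>0$ by the hypothesis $\mu \in L^1_{\rm loc}([0,\infty))$ with $\mu>0$. Taking $p$-th roots gives $|f(0)| \lesssim \|f\|_{N^{1,p}(X)}$, and combining with Theorem \ref{trace} completes the proof and shows $\Tr$ is bounded and linear (linearity being immediate from the pointwise definition of the trace).

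I do not expect any serious obstacle: the only point requiring care is verifying the local integrability needed to invoke H\"older on the edges near the root, and this is precisely the role of the standing assumptions on $\lambda^p/\mu$ and $\lambda/\mu$.
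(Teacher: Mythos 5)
Your proposal is correct and follows exactly the paper's route: the paper derives the corollary from Theorem \ref{trace} by the single remark that local absolute continuity gives $|f(0)|\lesssim \|f\|_{N^{1,p}(X)}$ via ``a direct computation.'' Your argument simply fills in that computation (fundamental theorem of calculus along edges near the root, H\"older with the standing local integrability assumptions, and averaging over a set of positive measure), and it is sound.
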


Next, we study non-existence of the traces when $R_p=\infty$. Before going to the main theorems, we introduce the following lemma. 
\begin{lem}[\cite{V85}]\label{lemma3.1}
Let $(\Omega, d, \mu_\Omega)$ be a $\sigma$-finite metric measure space. Then the following conditions on $(\Omega, d, \mu_\Omega)$ are equivalent:

\item (i) $L^p(\Omega)\subset L^q(\Omega)$ for all $p, q\in (0, \infty)$ with $p>q$;

\item (ii) $\mu(\Omega)<+\infty$.
\end{lem}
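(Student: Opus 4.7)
The plan is to prove (ii) $\Rightarrow$ (i) by a direct application of H\"older's inequality, and to prove (i) $\Rightarrow$ (ii) by contraposition, exhibiting a concrete $L^p$-function that fails to lie in $L^q$.

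For (ii) $\Rightarrow$ (i), I would apply H\"older's inequality with conjugate exponents $p/q$ and $p/(p-q)$ to the integrand $|f|^q \cdot 1$, which immediately yields
\[
\|f\|_{L^q(\Omega)} \le \mu_\Omega(\Omega)^{1/q - 1/p}\,\|f\|_{L^p(\Omega)}
\]
and hence the inclusion $L^p(\Omega) \subset L^q(\Omega)$.

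For (i) $\Rightarrow$ (ii), I argue by contraposition: assume $\mu_\Omega(\Omega) = \infty$ and fix any $0 < q < p < \infty$; the task is to produce $f \in L^p(\Omega) \setminus L^q(\Omega)$. First I would use $\sigma$-finiteness to write $\Omega$ as a disjoint union of measurable sets $A_k$ with $0 < \mu_\Omega(A_k) < \infty$, so that necessarily $\sum_k \mu_\Omega(A_k) = \infty$. A greedy grouping---absorbing successive $A_k$'s into a block and closing it off as soon as its accumulated mass reaches $1$---then produces a sequence $\{E_n\}_{n \ge 1}$ of pairwise disjoint measurable sets with $a_n := \mu_\Omega(E_n) \ge 1$, and the divergence of the total mass forces infinitely many such blocks. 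Then I would take
\[
f := \sum_{n=1}^{\infty} n^{-1/q}\, a_n^{-1/p}\, \chi_{E_n},
\]
so that $\|f\|_{L^p}^p = \sum_n n^{-p/q} a_n^{-1} \cdot a_n = \sum_n n^{-p/q}$ converges (since $p/q > 1$), while $\|f\|_{L^q}^q = \sum_n n^{-1} a_n^{1 - q/p} \ge \sum_n n^{-1} = \infty$ (using $a_n \ge 1$ and $1 - q/p > 0$). This gives the desired $f \in L^p(\Omega) \setminus L^q(\Omega)$, contradicting (i).

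The hard part---really the only nontrivial step---is the greedy grouping: verifying that $\sigma$-finiteness together with $\mu_\Omega(\Omega) = \infty$ truly permits a decomposition into infinitely many disjoint pieces whose measures are uniformly bounded below. This is elementary, and atoms pose no obstacle because each atom has finite mass and may serve as its own block (possibly of mass exceeding $1$); the argument only ever uses the one-sided bound $a_n \ge 1$.
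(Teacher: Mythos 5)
Your proof is correct, but note that the paper does not actually prove this lemma: it is imported verbatim from Villani's note \cite{V85}, so there is no internal argument to compare against. Your two directions are the standard ones and both check out: H\"older with exponents $p/q$ and $p/(p-q)$ gives $\|f\|_{L^q}\le \mu_\Omega(\Omega)^{1/q-1/p}\|f\|_{L^p}$, and your greedy-block construction is sound --- each block closes after finitely many of the $A_k$ (otherwise some tail of $\sum_k\mu_\Omega(A_k)$ would be bounded by $1$, contradicting divergence), so every $E_n$ has finite measure at least $1$ and there are infinitely many of them, after which the series computations $\sum_n n^{-p/q}<\infty$ and $\sum_n n^{-1}a_n^{1-q/p}\ge\sum_n n^{-1}=\infty$ are exactly right. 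Two small remarks. First, you prove more than the contrapositive requires: you exhibit $f\in L^p\setminus L^q$ for \emph{every} pair $p>q$, not just one, which is fine and arguably cleaner. Second, it is worth making explicit that $\sigma$-finiteness is not a cosmetic hypothesis here: it is exactly what lets you exhaust $\Omega$ by finite-measure pieces, and without it the implication (i) $\Rightarrow$ (ii) fails (a single atom of infinite mass has $L^p(\Omega)=\{0\}$ for all $p$, so (i) holds vacuously while $\mu_\Omega(\Omega)=\infty$). Your argument uses $\sigma$-finiteness in precisely the right place, so nothing is missing; this is just the point a reader would want flagged.
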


\begin{thm}\label{trace-example}
  Let $1 \leq p<\infty$ and assume that  $R_p=+\infty$. Then there exists a function $u\in \dot{N}^{1, p}(X)$ such that 
 \begin{equation}\label{eq3.1}\lim_{[0, \xi)\ni x\rarrow \xi} u(x)=+\infty,\ \ \forall \  \xi\in \bx.\end{equation}
 \end{thm}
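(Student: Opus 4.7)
The plan is to build a radial, non-decreasing function $u(x)=F(|x|)$ that blows up at infinity. Since each geodesic ray $[0,\xi)$ is an isometric copy of $[0,\infty)$ parametrized by $|x|$, the limit along $[0,\xi)$ equals $\lim_{t\to\infty}F(t)$; if $F$ is non-decreasing with $F(t)\to\infty$, then \eqref{eq3.1} holds for every $\xi\in\bx$ automatically. For such radial $u$ the minimal upper gradient is $g_u(x)=F'(|x|)/\lambda(|x|)$, and integrating a radial quantity over $X$ by summing over the $K^{n+1}$ edges joining levels $n$ and $n+1$ gives
\begin{equation*}
\int_X g_u^p\,d\mu\;\approx\;\int_0^\infty F'(t)^p\,\frac{K^t\mu(t)}{\lambda(t)^p}\,dt.
\end{equation*}
Writing $G=F'\ge 0$, the whole problem reduces to producing a locally integrable $G$ with $\int_0^\infty G\,dt=+\infty$ and $\int_0^\infty G^p K^t\mu/\lambda^p\,dt<\infty$, since then $F(t):=\int_0^t G(s)\,ds$ and $u(x):=F(|x|)$ are as required.

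For $p>1$ I would choose $G$ proportional to the extremizer of the H\"older inequality that drives Theorem~\ref{trace}. Set $A(t):=\lambda(t)^{p/(p-1)}\mu(t)^{-1/(p-1)}K^{-t/(p-1)}$, so that $R_p=\int_0^\infty A\,dt=+\infty$ and the key algebraic identity $A(t)^{p-1}K^t\mu(t)/\lambda(t)^p\equiv 1$ holds. Local integrability of $A$ (granted by $\lambda^p/\mu\in L^{1/(p-1)}_{\rm loc}$) with infinite total mass allows a partition $(0,\infty)=\bigsqcup_{k\ge 1}J_k$ into consecutive intervals with $\int_{J_k}A\,dt=1$; I would set $G:=\sum_k k^{-1}A\chi_{J_k}$. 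The identity then yields $\int G\,dt=\sum 1/k=+\infty$ and $\int G^p K^t\mu/\lambda^p\,dt=\sum 1/k^p<\infty$. For $p=1$, $R_1=+\infty$ together with $\lambda/\mu\in L^\infty_{\rm loc}$ implies that the weight $w(t):=\lambda(t)/(\mu(t)K^t)$ is essentially unbounded but locally bounded, forcing the super-level sets $\{w\ge 2^k\}$ to escape to infinity; I would pick disjoint measurable sets $E_k$ of positive measure on which $w\ge 2^k$ and take $G:=\sum_k |E_k|^{-1}\chi_{E_k}$, giving $\int G=\sum 1=+\infty$ while $\int G/w\,dt\le\sum 2^{-k}<\infty$.

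The main obstacle is the construction of $G$ itself: two weighted integral conditions of opposite flavor must hold simultaneously, and the fact that this is possible precisely when $R_p=\infty$ is a sharp failure-of-H\"older statement dual to the inequality making Theorem~\ref{trace} work. The identity $A^{p-1}K^t\mu/\lambda^p\equiv 1$ is what decouples the two weighted norms and reduces everything to the contrasting series $\sum 1/k^p<\infty<\sum 1/k$.
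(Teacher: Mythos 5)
Your proposal is correct and follows essentially the same route as the paper: reduce to a radial function $u(x)=F(|x|)$, which turns the problem into finding a one-variable density with divergent length integral $\int_0^\infty G\,dt$ and convergent weighted integral $\int_0^\infty G^pK^t\mu\lambda^{-p}\,dt$, then handle $p=1$ via the super-level sets of $\lambda/(\mu K^t)$ and $p>1$ via the density $r(t)=\lambda^{p/(p-1)}\mu^{1/(1-p)}K^{t/(1-p)}$ whose total integral is $R_p$. The only cosmetic difference is that for $p>1$ the paper produces the modulating weight $\alpha$ abstractly from Villani's lemma ($L^p(\mu_r)\not\subset L^1(\mu_r)$ when $\mu_r$ is infinite), whereas you build it explicitly as $\sum_k k^{-1}\chi_{J_k}$ on a partition with $\int_{J_k}r\,dt=1$, which is just the standard proof of that lemma made concrete.
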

\begin{proof}
To construct the function $u\in \dot{N}^{1, p}(X)$ satisfying \eqref{eq3.1}, it suffices to find a nonnegative measurable function $g: [0, \infty)\rightarrow [0, \infty]$   such that
\begin{equation}\label{eq3.31}\left\{
\begin{array}l
\int_{0}^{+\infty} g(t) \lambda(t)\, dt=+\infty\\
\int_{0}^{+\infty} g(t)^p\mu(t) K^t\, dt<+\infty.
\end{array}\right.
\end{equation}
Given such $g$, we may define the function $u$ by setting $u(0)=0$ and $u(x)=\int_{0}^{|x|} g(t) \lambda(t)\, dt$ for any $x\in X$. Then it follows from the definition of upper gradient that $g_u: X\rightarrow [0, \infty]$ defined by $g_u(x)=g(|x|)$ is an upper gradient of $u$. Moreover, we obtain that
\[\|g_u\|^p_{L^p(X)}=\int_{X} {g_u}^p\, d\mu\approx \int_{0}^{+\infty} {g(t)}^p \mu(t) K^{t}\, dt<+\infty.\]
Hence the condition \eqref{eq3.31} implies $u\in \dot{N}^{1, p}(X)$  and that \eqref{eq3.1} holds.  

For $p=1$, since $R_1=\left\|\frac{\lambda(t)}{\mu(t)K^{ t }}\right\|_{L^{\infty}([0, \infty) )}=\infty$, the  sets
$$A_k:=\left\{t\in [0, \infty): \frac{\lambda(t)}{\mu(t)K^t}\geq 2^k\right\},\ \ \ k\in \N $$
form a nonincreasing sequence of subsets of $[0, \infty)$ and we have 
\[|A_k|>0\ \ \text{for any}\ \ k, n\in\mathbb N.\]
Hence there exists an infinite sequence $\{k_n\}_{k_n\in\mathbb N}$ such that $|B_{k_n}|>0$ for any $k_n$, where
\[B_{k_n}=A_{k_n}\setminus A_{k_{n+1}}=\left\{t\in [0, \infty): 2^{k_{n+1}}\geq \frac{\lambda(t)}{\mu(t)K^t}> 2^{k_n}\right\};\]
 otherwise, there will be  $N\in\N$ such that for any $k\geq N$, we have $|B_{k}|=0$, and hence $|A_k|=0$ for any $k\geq N$, which is a contradiction. Since $\lambda\in L^1_{\rm loc}$, we may also assume that $0<\int_{B_{n_k}} \lambda(t)\, dt<+\infty$ by replacing $B_{n_k}$ with a suitable bounded subset if necessary.
Then we define $g$ by setting
\begin{equation*}g(t)=\left\{
\begin{array}{cc}
\frac{1}{\int_{B_{n_k}} \lambda(t)\, dt}&\ \ \forall \ \ t\in B_{n_k},  \ n_k\in \N;\\
0,& \ \ \ \text{otherwise}.
\end{array}\right.
\end{equation*}
It follows  that
\[\int_{0}^{+\infty} g(t) \lambda(t)\, dt=\sum_{n_k} \int_{B_{n_k}} \frac{\lambda(t)}{\int_{B_{n_k}} \lambda(t)\, dt}\, dt=\sum_{n_k} 1=+\infty\]
and from the definition of $B_{n_k}$ that
\begin{align*}
\int_{0}^{+\infty} g(t)\mu(t) K^t\, dt& =\sum_{n_k} \int_{B_{n_k}} \frac{\mu(t)K^t}{\int_{B_{n_k}} \lambda(t)\, dt}\, dt\\&\leq \sum_{n_k} 2^{-n_k} \int_{B_{n_k}} \frac{\lambda(t)}{\int_{B_{n_k}} \lambda(t)\, dt}\, dt
 =\sum_{n_k} 2^{-n_k}<\infty.
 \end{align*}
 Hence \eqref{eq3.31} is satisfied.

For $p>1$, let $r(t)=\lambda(t)^{\frac{p}{p-1}}\mu(t)^{\frac{1}{1-p}} K^{\frac{t}{1-p}}$. Then we know that
\begin{equation}\label{eq3.32}
R_p= \int_{0}^{\infty} \lambda(t)^{\frac{p}{p-1}}\mu(t)^{\frac{1}{1-p}} K^{\frac{t}{1-p}}=\int_{0}^\infty r(t)\, dt =\infty.
\end{equation}
Since $\lambda^p/\mu\in L^{1/(p-1)}_{\rm loc}([0,\infty))$, we have $r\in L^1_{\rm loc}([0, \infty))$.
 Define the function $g$ by setting
\[g(t)=\lambda(t)^{\frac{1}{p-1}}\mu(t)^{\frac{1}{1-p}} K^{\frac{t}{1-p}} \alpha(t)=r(t)\alpha(t)/\lambda(t),\]
where $\alpha: [0, \infty)\rightarrow [0, \infty]$ is be determined. Then to find a function $g$ satisfying \eqref{eq3.31}, it suffices to show the existence of a function $\alpha$ satisfying
\begin{equation}\label{eq3.33}
\left\{
\begin{array}l
\int_{0}^{+\infty} g(t) \lambda(t)\, dt=\int_0^\infty \alpha(t)r(t)\, dt=+\infty\\
\int_{0}^{+\infty} g(t)^p\mu(t) K^t\, dt=\int_0^\infty {\alpha(t)}^p r(t)\, dt<+\infty.
\end{array}\right.
\end{equation}
Consider the metric measure space $([0, \infty), d_{E}, \mu_r)$ with $d_E$ the Euclidean distance where $d\mu_r=r(t)\, dt.$ Since $r\in L^1_{\rm loc}$, we have that
$([0, \infty), d_{E}, \mu_r)$ is a $\sigma$-finite metric measure space. Then it follows from \eqref{eq3.32} that $\mu_r([0, \infty))=+\infty$. Hence by Lemma \ref{lemma3.1}, we know that $L^p([0, \infty), \mu_r)\nsubseteq L^1([0, \infty), \mu_r)$, i.e., there exists a function $\alpha: [0, \infty)\rightarrow \real$ such that $\alpha\in L^p([0, \infty), \mu_r)$ but $\alpha\notin L^1([0, \infty), \mu_r).$ Choosing this $\alpha$ ensures \eqref{eq3.33}.

In conclusion, for $1\leq p<\infty$, we can construct a function $u\in \dot{N}^{1, p}(X)$ satisfying \eqref{eq3.1}. 
\end{proof}

\begin{rem}\rm\label{remark_osc}
If additionally $\mu(X)<\infty$, instead of constructing the above increasing function, we may easily modify the construction so as to obtain a  piecewise monotone function $u\in N^{1, p}(X)$ with values in $[0, 1]$ so that $u(x)=1$ when $|x|=t_{2j}$ and $u(x)=0$ when $|x|=t_{2j+1}$, where $t_k\rightarrow \infty$ as $k\rightarrow \infty$.
Then this oscillatory function $u$ belongs to $N^{1, p}(X)$, but has no limit along any geodesic ray. Hence we obtain the following result.
\end{rem}

\begin{prop}\label{trace-N-example}
 Let $1 \leq p<\infty$ and assume that  $R_p=+\infty$. If $\mu(X)<\infty$, then there exists a function $u\in N^{1, p}(X)$ such that
 $\lim_{[0, \xi)\ni x\rarrow \xi} u(x)$ {does not exist}, {for any} $\xi\in \bx.$

\end{prop}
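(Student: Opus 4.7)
The plan is to recycle the radial function $g:[0,\infty)\to[0,\infty]$ produced in the proof of Theorem \ref{trace-example}, which satisfies $\int_0^\infty g(t)\lambda(t)\,dt=\infty$ together with $\int_0^\infty g(t)^p\mu(t)K^t\,dt<\infty$, but to use it as the ``speed'' of a bounded oscillating radial function rather than a monotone diverging one. Set $F(s):=\int_0^s g(t)\lambda(t)\,dt$. Since $g\lambda$ is locally integrable (immediate from the $p=1$ construction, and for $p>1$ following from the factorization $g\lambda=r\alpha$ with $r\in L^1_{\rm loc}$, $\alpha\in L^p(\mu_r)$, via H\"older), the function $F$ is continuous, nondecreasing, and tends to $\infty$. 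Hence I can pick $0=s_0<s_1<s_2<\cdots\to\infty$ with $F(s_j)=j$.

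Next I would define $v:[0,\infty)\to[0,1]$ piecewise by $v(r)=F(r)-j$ on $[s_j,s_{j+1}]$ when $j$ is even and $v(r)=(j+1)-F(r)$ on $[s_j,s_{j+1}]$ when $j$ is odd. Then $v$ is absolutely continuous, takes the value $0$ at every $s_{2k}$ and the value $1$ at every $s_{2k+1}$, has range in $[0,1]$, and satisfies $|v'(r)|=g(r)\lambda(r)$ a.e. Set $u(x):=v(|x|)$. Since $\mu(X)<\infty$ and $|u|\le 1$, automatically $u\in L^p(X)$. The same radial computation as in the proof of Theorem \ref{trace-example} shows that $g_u(x):=g(|x|)$ is an upper gradient of $u$, so
\[
\int_X g_u^p\,d\mu\approx \int_0^\infty g(t)^p\mu(t)K^t\,dt<\infty,
\]
giving $u\in N^{1,p}(X)$.

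Finally, for every $\xi\in\partial X$ the ray $[0,\xi)$ contains points $x_{j}(\xi)$ with $|x_{j}(\xi)|=s_j$, and by construction $u(x_{2k}(\xi))=0$ while $u(x_{2k+1}(\xi))=1$ for all $k$; thus $\lim_{[0,\xi)\ni x\to\xi}u(x)$ fails to exist at every boundary point. The only real technical point is the existence of breakpoints $s_j$ with $F(s_j)=j$, which is why I separated out the continuity/local integrability of $F$ above; apart from that step the argument is a direct oscillatory modification of the monotone construction already carried out for Theorem \ref{trace-example}, so I do not expect any serious obstacle.
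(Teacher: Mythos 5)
Your proposal is correct and is essentially the paper's own argument: the paper (Remark \ref{remark_osc}) proves Proposition \ref{trace-N-example} by exactly this oscillatory modification of the function from Theorem \ref{trace-example}, producing a piecewise monotone radial function with values in $[0,1]$ equal to $0$ and $1$ alternately at radii $t_k\to\infty$, with the same upper gradient $g(|x|)$ and with $\mu(X)<\infty$ giving the $L^p$ bound. You have merely supplied the details (local integrability of $g\lambda$, the breakpoints $s_j$) that the paper leaves implicit.
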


The above results give the full answers to the trace results for the  homogeneous Newtonian space $\dot N^{1, p}(X)$ and also for the Newtonian space $N^{1, p}(X)$ when $\mu(X)<\infty$. We continue towards the case $\mu(X)=\infty$.

\begin{prop}\label{trace-0}
Let $1 \leq p<\infty$ and assume $\mu(X)=\infty$. Then for any $f\in L^{p}(X)$, we have
\begin{equation}\label{eq-prop3.1}
\liminf_{[0, \xi)\ni x\rightarrow \xi} |f(x)|=0, \ \ \text{ for a.e.} \ \xi\in \bx,
\end{equation}
and hence $\Tr f =0$ if $\Tr f$ exists.
\end{prop}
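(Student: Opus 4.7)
My plan is to argue by contradiction: I will use Lemma \ref{lemma3.6} together with the fact that $\mu(X)=\infty$ forces the weighted ray integral $\int_0^\infty K^{\lfloor t\rfloor}\mu(t)\,dt$ to diverge.

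First I would fix $f\in L^p(X)$ and suppose, for contradiction, that
$$E:=\Big\{\xi\in\bx:\ \liminf_{[0,\xi)\ni x\to\xi}|f(x)|>0\Big\}$$
has positive $\nu$-measure. Writing $E=\bigcup_{n,N\in\N}E_{n,N}$ with
$$E_{n,N}:=\Big\{\xi\in\bx:\ |f(x)|>\tfrac{1}{n}\text{ for every }x\in[0,\xi)\text{ with }|x|>N\Big\},$$
I would pick indices $n_0,N_0$ for which $\nu(E_{n_0,N_0})>0$.

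Next, for every $\xi\in E_{n_0,N_0}$ I would estimate, along the ray,
$$\int_{[0,\xi)}|f(x)|^p K^{j(x)}\,d\mu(x)\ \geq\ \frac{1}{n_0^p}\int_{N_0}^{\infty} K^{\lfloor t\rfloor}\mu(t)\,dt.$$
The key observation is that $\mu(X)=\infty$ is equivalent to $\int_0^\infty K^{\lfloor t\rfloor}\mu(t)\,dt=\infty$: this follows from the same ray-decomposition used in Lemma \ref{lemma3.6} applied to the constant function $1$, so the tail integral beyond $N_0$ is again infinite. Integrating the above display over $E_{n_0,N_0}$ would then force
$$\int_\bx\int_{[0,\xi)}|f(x)|^p K^{j(x)}\,d\mu(x)\,d\nu(\xi)=\infty,$$
contradicting the bound $\lesssim \|f\|_{L^p(X)}^p<\infty$ supplied by Lemma \ref{lemma3.6}. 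Thus $\nu(E)=0$, which gives \eqref{eq-prop3.1}. The assertion $\Tr f=0$ whenever $\Tr f$ exists is then immediate: when the limit along $[0,\xi)$ exists, it must agree with the liminf, which is zero for $\nu$-a.e.\ $\xi$.

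The main technical concern I anticipate is the measurability of the sets $E_{n,N}$ and of the map $\xi\mapsto\liminf_{x\to\xi}|f(x)|$ for a general $L^p$-representative, since pointwise values of $f$ are defined only up to a null set. I would sidestep this either by passing to an essential liminf along each ray (well-defined via the parametrization $t\mapsto f(\gamma_\xi(t))$ by Fubini) or by fixing a Borel representative of $f$; either choice is enough for the pointwise lower bound $|f(x)|>1/n_0$ to yield the stated ray estimate and close the contradiction.
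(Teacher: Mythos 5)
Your proposal is correct and follows essentially the same route as the paper: a contradiction argument built on Lemma \ref{lemma3.6}, combined with the observation that $\mu(X)=\infty$ makes every tail integral $\int_{N}^{\infty}K^{j(t)}\mu(t)\,dt$ diverge along each ray. The only cosmetic difference is that you pigeonhole $E$ into the countable family $E_{n,N}$ to get uniform constants, whereas the paper works directly with $\xi$-dependent $\epsilon(\xi)$ and $N(\xi)$; both close the argument in the same way.
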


\begin{proof}
Assume that \eqref{eq-prop3.1} is false. Then there exist a function $f\in  L^{p}(X)$ and a set $E\subset \bx$ with $\nu(E)>0$ such that 
$$\liminf_{[0, \xi)\ni x\rightarrow \xi} |f(x)|>0, \ \ \forall\ \xi\in E.$$
Hence for any $\xi\in E$, there exist a constant $\epsilon(\xi)>0$ and an integer $N(\epsilon)$ such that 
$$|f(x)|\geq \epsilon(\xi)>0,\ \ \ \forall \ x\in [0, \xi) \ \ \text{with} \ \ |x|\geq N(\epsilon).$$
It follows from Lemma \ref{lemma3.6} that
\begin{align*}
\|f\|_{L^p(X)}^p&=\int_X |f(x)|^p\, d\mu \approx \int_\bx\int_{[0, \xi)}|f(x)|^p K^{j(x)}\, d\mu(x)\, d\nu(\xi)\\
&\geq \int_E\int_{[0, \xi)\cap\{|x|\geq N(\xi)\}}|f(x)|^p K^{j(x)}\, d\mu(x)\, d\nu(\xi)\\
&\geq \int_E\int_{[0, \xi)\cap\{|x|\geq N(\xi)\}} \epsilon(\xi)^p K^{j(x)}\, d\mu(x)\, d\nu(\xi)\\
&= \int_E \epsilon(\xi)^p \int_{N(\xi)}^{\infty} K^{j(t)} \mu(t)\, dt\, d\nu(\xi),
\end{align*}
where $j(t)$ is the largest integer such that $j(t)\leq t$.
Since $\mu(X)=\infty$ and $\mu\in L^1_{\rm loc}(X)$,  for any integer $N(\xi)$, we have 
$$\int_{N(\xi)}^{\infty} K^{j(t)} \mu(t)\, dt=\infty.$$
Since $\epsilon(\xi)>0$ for any $\xi\in E$ and $\nu(E)>0$, we obtain that
$$\|f\|_{L^p(X)}^p=+\infty,$$
which contradicts the fact that $f\in  L^{p}(X)$. Thus \eqref{eq-prop3.1} holds.

If $\Tr f$ exists, then $\Tr |f|$ also exists. It follows from the definition of the trace \eqref{intro-trace} and \eqref{eq-prop3.1} that $\Tr |f|=0$.  Hence $\Tr f=0$.
\end{proof}

\begin{prop}\label{no-trace-infinity}
 Assume  $R_1=+\infty$. Then  there exists a function $u\in N^{1,1}(X)$ such that
 $\lim_{[0, \xi)\ni x\rarrow \xi} u(x)$ {does not exist}, {for any} $\xi\in \bx.$
\end{prop}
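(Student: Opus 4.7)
The plan is to build a radial oscillating function $u(x)=v(|x|)$ in the spirit of Remark \ref{remark_osc}, modified so that $u\in L^1(X,\mu)$ even when $\mu(X)=\infty$. The starting observation is that the standing hypothesis $\lambda/\mu\in L^\infty_{\rm loc}([0,\infty))$ combined with $K\ge 1$ makes $t\mapsto \lambda(t)/(\mu(t)K^t)$ locally essentially bounded on $[0,\infty)$; combined with $R_1=\|\lambda/(\mu K^\cdot)\|_{L^\infty([0,\infty))}=\infty$, this forces the super-level set $A_M:=\{t\ge 0:\lambda(t)/(\mu(t)K^t)>M\}$ to be \emph{unbounded} for every $M>0$, since otherwise $\lambda/(\mu K^t)$ would be bounded globally by $\max(M,\|\lambda/(\mu K^t)\|_{L^\infty([0,T])})$ for some $T$.

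Using this, I would inductively choose bounded intervals $I_k=[c_k,c_k+\delta_k]$ with $\sup I_{k-1}<c_k$ and $c_k\to\infty$, together with measurable subsets $B_k\subset A_{2^k}\cap I_k$ of positive Lebesgue measure, arranged so that $\int_{I_k}\mu(t)K^t\,dt\le 2^{-k}$. This is possible by first picking $c_k$ close to a Lebesgue density point of $A_{2^k}\cap(\sup I_{k-1}+1,\infty)$ (which exists because $A_{2^k}$ is unbounded) and then shrinking $\delta_k$, exploiting $\mu K^t\in L^1_{\rm loc}$ to get the mass bound while the density condition preserves $|B_k|>0$. Let $m_k\in B_k$ be the $\lambda\,dt$-median of $B_k$, i.e.\ $\int_{B_k\cap[0,m_k]}\lambda\,ds=\tfrac12\int_{B_k}\lambda\,ds$, and set
\[
h_k(s):=\frac{2}{\int_{B_k}\lambda\,d\sigma}\bigl(\chi_{B_k\cap[0,m_k]}(s)-\chi_{B_k\cap(m_k,\infty)}(s)\bigr),\qquad v(t):=\sum_{k=1}^{\infty}\int_{0}^{t}h_k(s)\lambda(s)\,ds,
\]
and define $u(x):=v(|x|)$. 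Because each $\int h_k\lambda\,ds=0$ with supports in the pairwise disjoint $I_k$, the function $v$ is absolutely continuous on $[0,\infty)$, satisfies $0\le v\le 1$, $v(m_k)=1$ for each $k$, and $v\equiv 0$ on the complement of $\bigcup_k I_k$; in particular $v=0$ on the nonempty gaps $(\sup I_{k-1},c_k)$, which extend to infinity. Hence $\lim_{t\to\infty}v(t)$ does not exist, and since $u$ is radial, $\lim_{[0,\xi)\ni x\to\xi}u(x)$ fails to exist for \emph{every} $\xi\in\partial X$.

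It remains to verify $u\in N^{1,1}(X)$. Since $v$ is absolutely continuous, $g_u(x):=|v'(|x|)|/\lambda(|x|)$ is an upper gradient of $u$, and on $B_k\subset A_{2^k}$ the bound $\mu(t)K^t\le 2^{-k}\lambda(t)$ gives
\[
\int_X g_u\,d\mu\;\approx\;\sum_{k=1}^{\infty}\frac{2}{\int_{B_k}\lambda\,d\sigma}\int_{B_k}\mu(t)K^t\,dt\;\le\;\sum_{k=1}^{\infty}2\cdot 2^{-k}<\infty,
\]
while $|v|\le 1$ on $\bigcup_k I_k$ and $v\equiv 0$ elsewhere yield
\[
\int_X |u|\,d\mu\;\approx\;\sum_{k=1}^{\infty}\int_{I_k}\mu(t)K^t\,dt\;\le\;\sum_{k=1}^{\infty}2^{-k}<\infty.
\]
The main obstacle is the one handled in the second paragraph: since $\lambda/(\mu K^\cdot)$ is merely measurable, the sets $A_M$ need not contain any interval, which rules out a direct tent-over-an-interval construction; placing the possibly Cantor-like $B_k$ inside honest intervals $I_k$ of controlled $\mu(t)K^t$-mass is what simultaneously delivers the $L^1$ bounds on $u$ and on $g_u$.
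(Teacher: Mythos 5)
Your construction is correct and follows essentially the same route as the paper's proof: both locate, for each $k$, a short interval $I_k$ far out along $[0,\infty)$ with $\int_{I_k}\mu(t)K^t\,dt\le 2^{-k}$ that meets the super-level set $\{t:\lambda(t)/(\mu(t)K^t)\ge 2^k\}$ in positive Lebesgue measure, and then build a radial tent of height $1$ supported on that set (the paper finds $I_k$ by pigeonholing a subdivision of $[t_{k-1},t_k]$ into pieces of small $\mu K^t$-mass, you by a Lebesgue density-point argument; the resulting $L^1$ estimates for $u$ and $g_u$ are identical). One small wording fix: a density point of $A_{2^k}$ in the tail $(T,\infty)$ requires $|A_{2^k}\cap(T,\infty)|>0$, not merely that $A_{2^k}$ is unbounded as a set; but this stronger fact is exactly what your essential-supremum contradiction actually yields, so the argument stands as written.
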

\begin{proof}
Since $\lambda/\mu\in L^\infty_{\rm loc}([0, \infty))$ implies that 
 $\left\|\frac{\lambda(t)}{\mu(t)K^{ t }}\right\|_{L^{\infty}([0, n) )}<\infty$ for any $n\in \N$, it follows from 
$R_1=\left \| \frac{\lambda(t)}{\mu(t)K^{t}}\right \|_{{L^1({[0,\infty)})}}=\infty$ that  the sequence of sets 
	\[E^n_k:=\left \{t\in [n,\infty): \frac{\lambda(t)}{\mu(t)K^{t}}\geq 2^k \right \}, \ \ n, k\in\mathbb{N} 
	\] 
satisfies
\[|E^n_k|>0\ \ \text{for any}\ \ n,  k\in\mathbb N.\]
Hence we may choose a sequence $\{t_k: t_k\in [0, \infty)\}_{k\in\N}$ with 
\begin{equation}\label{E_k-positive}
t_k\rightarrow \infty \ \ \text{as}\ \ k\rightarrow \infty\ \ \text{and}\ \ |E_k\cap [t_{k-1}, t_k]|>0 \ \ \text{for any}\ \  k\in\mathbb N_+,
\end{equation}
{ where} {$E_k=E_k^0$}. Since $\mu\in L^1_{\rm loc}([0, \infty))$, we have that for any $k\in \N_+$,
\[0<\int_{t_{k-1}}^{t_k} \mu(t)K^t\, dt=:M_k<\infty.\]
By the absolute continuity of integral with respect to measure, we may divide the interval $[t_{k-1}, t_k]$ into $\lceil 2^k M_k \rceil$  subintervals $\{I_j\}_j$ whose interiors are pairwise disjoint such that
\begin{equation}\label{I_k}
\bigcup_{j=1}^{\lceil 2^k M_k \rceil} I_j=[t_{k-1}, t_k]\ \ \text{and}\ \ 0<\int_{I_j} \mu(t)K^t\, dt\leq 2^{-k}.
\end{equation}
Since $|E_k\cap[t_{k-1}, t_k]|>0$ from \eqref{E_k-positive}, we obtain there is at least one subinterval $I_k\in \{I_j\}_j$ such that $|E_k\cap I_k|>0$. Then we define a function $g$ by setting
\begin{equation*}g(t)=\left\{
\begin{array}{cc}
\frac{2}{\int_{E_k\cap I_k}\lambda(t)\, dt}&\ \ \forall \ \ t\in E_k\cap I_k,  \ k\in \N;\\
0,& \ \ \ \text{otherwise}.
\end{array}\right.
\end{equation*}
Since $\lambda(t)$ is always positive and $\lambda\in L^1_{\rm loc}$, the above definition is well-defined. Next we construct the function $u$. For any $k\in \N_+$, since we have
\begin{equation}\label{2^k-estimate}
\int_{t_{k-1}}^{t_k} g(t)\lambda(t)\, dt=\int_{E_k\cap I_k} \frac{2\lambda(t)}{\int_{E_k\cap I_k}\lambda(t)\, dt}\, dt=2,
\end{equation}
we may apply the same idea of constructing as in Remark \ref{remark_osc} on $\{x\in X: t_{k-1}\leq|x|\leq t_k\}$ to obtain  a piecewise monotone function $u$ with upper gradient $g_u(x)=g(|x|)$ and with values in $[0, 1]$ so that $u(x)=0$ when $|x|=t_{k-1}, t_k$ and $u(x)=1$ when $|x|=t'_k$ where $t_{k-1}<t'_k<t_k$.  Then the function $u$ has no limit along any geodesic rays. 

Thus it remains to show that $u\in N^{1,1}(X)$. We first estimate the $L^1$-norm of the upper gradient $g_u$ of $u$. By the definitions of function $g$ and of $E_k$, it follows from estimate \eqref{2^k-estimate} that 
\begin{align*}
\int_{X} g_u\, d\mu&=\int_{0}^{\infty} g(t)\mu(t)K^{t} \, dt =\sum_{k\in\N_+} \int_{E_k\cap I_k}  g(t)\mu(t)K^{t}\, dt\\
&\leq  \sum_{k\in\N_+} 2^{-k} \int_{E_k\cap I_k} g(t)\lambda(t) \, dt = \sum_{k\in\N_+} 2^{1-k}<\infty.
\end{align*}
For the $L^1$-norm estimate of $u$, notice that $u(x)>0$ only if $|x|\in I_k$ for some $k\in \N$. Since $|u(x)|\leq 1$,  we obtain from \eqref{I_k} that
\[\int_X u\, d\mu=\sum_{k\in\N_+}\int_{\{x\in X: |x|\in I_k\}} u(x)\, d\mu(x)\lesssim \sum_{k\in \N_+} \int_{I_k} \mu(t)K^{t} \, dt \leq \sum_{k\in\N_+} 2^{-k}<\infty.\]
We conclude that $u\in N^{1,1}(X)$ and that $\lim_{[0, \xi)\ni x\rarrow \xi} u(x)$ {does not exist}, {for any} $\xi\in \bx.$
\end{proof}

\begin{rem}\rm
The above proposition is strong and surprising, since it does not require 
$\mu(X)<\infty$ anymore. Especially, for $p=1$, it follows from the above proposition, Theorem \ref{trace} and Corollary \ref{trace-pp} that $R_1<\infty$ is a characterization for the existence of traces for both $N^{1,1}(X)$ and $\dot N^{1, 1}$, no matter whether the total measure is finite or not. In the following, we will show that if the total measure is infinite, then $R_p<\infty$ is not a characterization for the existence of traces for $N^{1, p}(X)$ when $1<p<\infty$.
\end{rem}

For simplicity, we consider the special case where $\lambda$ and $\mu$ are 
piecewise constant. More precisely,  assume that 
\[\lambda(t)=\lambda_j, \ \mu(t)=\mu_j, \ \text{for} \ t\in [j, j+1), \ j\in\N,\]
where $\{\lambda_{j}\}_{j\in\N}$ and $\{\mu_{j}\}_{j\in\N}$ are two sequences of positive and finite real numbers.
Then 
\begin{equation}\label{piece-wise-constant}
ds=d\,\lambda(z)=\lambda_j d\, |z| \ \ {\rm and }\ \ d\, \mu(z) = \mu_j d\, |z|, \ \ {\rm for } \ \ j\leq |z| < j+1, j\in \N.
\end{equation}
We establish the following trace result.

\begin{lem}\label{thm-infinity}
Let $X$ be a $K$-regular tree with measure and metric as in \eqref{piece-wise-constant}. Then the following hold:

\item (i)\ \ $\Tr f$ exists and $\Tr f=0$ for any $f\in N^{1, p}(X)$ if $$\sup_j\left\{\max\left\{\frac1{K^j\mu_j} , \frac{{\lambda_j}^p} {K^j\mu_j}\right\}\right\}<+\infty .$$  

\item (ii)\ \  There exists a function $u\in N^{1, p}(X)$ such that $\Tr u$ does not exists if 
$$\sup_j\left\{\min\left\{\frac1{K^j\mu_j}, \frac{{\lambda_j}^p} {K^j\mu_j}\right\}\right\}=+\infty .$$
\end{lem}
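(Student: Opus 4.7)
For part (i), first observe that the hypothesis $\frac{1}{K^j\mu_j}\le C$ forces each annulus $\{j\le |x|<j+1\}$ to have $\mu$-mass bounded below by a positive constant, so $\mu(X)=\infty$. By Proposition \ref{trace-0}, if $\Tr f$ exists it must equal zero, so it suffices to show $f(x)\to 0$ along $[0,\xi)$ for $\nu$-a.e.\ $\xi$. The plan is to decompose $f$ on each edge of $[0,\xi)$ into a mean plus a bound on the oscillation. For $\xi\in \bx$ and $j\in \N$, let $x_\xi(t)$ denote the point of $[0,\xi)$ with $|x_\xi(t)|=t$, and set
\[\bar f_j(\xi):=\int_j^{j+1} f(x_\xi(t))\,dt,\qquad h_j(\xi):=\lambda_j\int_j^{j+1} g_f(x_\xi(t))\,dt.\]
Since $|f(x)-f(y)|\le \int_{[x,y]}g_f\,ds$ for any $x,y$ on the edge $[x_j(\xi),x_{j+1}(\xi)]$, one has $|f(x)-\bar f_j(\xi)|\le h_j(\xi)$ there, and hence $|f(x)|\le |\bar f_j(\xi)|+h_j(\xi)$ along the whole edge.

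Next I would estimate $\bar f_j$ and $h_j$ in $L^p(\bx)$. Applying Jensen's inequality to $|\bar f_j|^p$, then Fubini (as in the proof of Lemma \ref{lemma3.6}) using $\nu(I_x)\approx K^{-j(x)}$, gives
\[\int_\bx |\bar f_j(\xi)|^p\,d\nu\lesssim \frac{1}{K^j\mu_j}\int_{\{j\le |x|<j+1\}}|f|^p\,d\mu\le C\int_{\{j\le |x|<j+1\}}|f|^p\,d\mu.\]
Summing in $j$ yields $\sum_j\int_\bx |\bar f_j|^p\,d\nu\lesssim \|f\|_{L^p(X)}^p<\infty$, so $\bar f_j(\xi)\to 0$ for $\nu$-a.e.\ $\xi$. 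An entirely parallel H\"older/Fubini argument exploiting $\frac{\lambda_j^p}{K^j\mu_j}\le C$ produces $\sum_j\int_\bx h_j^p\,d\nu\lesssim \|g_f\|_{L^p(X)}^p<\infty$, so $h_j(\xi)\to 0$ for $\nu$-a.e.\ $\xi$. Combining the two conclusions, $f(x)\to 0$ along $[0,\xi)$ for $\nu$-a.e.\ $\xi$.

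For part (ii), the assumption yields a subsequence $\{j_k\}$ with $K^{j_k}\mu_{j_k}\to 0$ and $K^{j_k}\mu_{j_k}/\lambda_{j_k}^p\to 0$ simultaneously. Passing to a further subsequence I may arrange $j_{k+1}>j_k+1$ and $K^{j_k}\mu_{j_k}+ K^{j_k}\mu_{j_k}/\lambda_{j_k}^p\le 2^{-k}$. Define a radial function $u$: let $u(x)=0$ whenever $|x|\notin\bigcup_k [j_k,j_k+1]$; on $[j_k,j_k+\tfrac12]$ let $u$ increase linearly in $|x|$ from $0$ to $1$; on $[j_k+\tfrac12,j_k+1]$ decrease linearly back to $0$. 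Along every geodesic ray $u=1$ at $|x|=j_k+\tfrac12$ and $u=0$ at $|x|=j_k+1$, so $\lim_{x\to\xi}u(x)$ fails to exist for every $\xi\in\bx$. The verification $u\in N^{1,p}(X)$ is routine: $\|u\|_{L^p(X)}^p\lesssim \sum_k K^{j_k}\mu_{j_k}<\infty$, while $g_u=2/\lambda_{j_k}$ on the $k$-th annulus gives $\|g_u\|_{L^p(X)}^p\lesssim \sum_k K^{j_k}\mu_{j_k}/\lambda_{j_k}^p<\infty$.

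The main subtlety is in part (i): the hypothesis supplies only boundedness, not summability, of $\lambda_j^p/(K^j\mu_j)$, so $R_p=\infty$ is possible and Theorem \ref{trace} cannot be invoked directly. The mean/oscillation decomposition works because the two bounded quantities in the hypothesis control exactly the two independent summability needs; $\|f\|_{L^p}$ handles the means via $\frac{1}{K^j\mu_j}\le C$, and $\|g_f\|_{L^p}$ handles the oscillations via $\frac{\lambda_j^p}{K^j\mu_j}\le C$.
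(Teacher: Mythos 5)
Your proposal is correct and follows essentially the same route as the paper: part (i) uses the identical edge-by-edge decomposition $\sup_{[x_j,x_{j+1}]}|f|\le \dashint_{[x_j,x_{j+1}]}|f|\,d\mu+\int_{[x_j,x_{j+1}]}g_f\,ds$, with the two hypotheses controlling the mean via $\|f\|_{L^p}$ and the oscillation via $\|g_f\|_{L^p}$ (the paper phrases this as the vanishing of tails of convergent integrals along a.e.\ ray rather than as the convergence of a series in $j$, but it is the same Lemma~\ref{lemma3.6}/H\"older computation), and part (ii) is the paper's tent-function construction on a sparse sequence of levels verbatim. The appeal to Proposition~\ref{trace-0} at the start of (i) is harmless but unnecessary, since you then prove convergence to $0$ directly.
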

\begin{proof}
 (i) Let $f\in N^{1, p}(X)$. Then $f$ and the upper gradient $g_f$ of $f$ belong to $L^p(X)$. It follows from Lemma \ref{lemma3.6} that
$$\int_\bx\int_{[0, \xi)}|f(x)|^p K^{j(x)}\, d\mu(x)\, d\nu(\xi) \approx \int_{X} |f(x)|^p\, d\mu(x)<+\infty$$
and that
$$\int_\bx\int_{[0, \xi)}|g_f(x)|^p K^{j(x)}\, d\mu(x)\, d\nu(\xi) \approx \int_{X} |g_f(x)|^p\, d\mu(x)<+\infty.$$
Hence for a.e. $\xi\in \bx$, we have
\begin{equation}\label{thm1.8-1}
\int_{[0, \xi)}|f(x)|^p K^{j(x)}\, d\mu(x)<+\infty \ \ \text{and}\ \ \int_{[0, \xi)}|g_f(x)|^p K^{j(x)}\, d\mu(x)<+\infty.
\end{equation}
Let $x_j=x_j(\xi)$ be the ancestor of $\xi$ with $|x_j|=j$. By using the H\"older inequality, we obtain that
$$\left(\dashint_{[x_j, x_{j+1}]} |f(x)|\, d\mu(x)\right)^p\leq \frac{1}{\mu_j K^j} \int_{[x_j, x_{j+1}] }|f(x)|^pK^{j(x)}\, d\mu$$
and that
\begin{align*}
\left(\int_{[x_j, x_{j+1}]} |g_u|\, ds\right)^p=\left(\int_{[x_j, x_{j+1}]} |g_u| \lambda_j/\mu_j\, d\mu\right)^p \leq \frac{\lambda_j^p}{\mu_jK^j}\int_{[x_j,x_{j+1}]}|g_u|^pK^{j(x)}d\mu 
\end{align*}
If $\sup_j\left\{\max\left\{\frac1{K^j\mu_j} , \frac{{\lambda_j}^p} {K^j\mu_j}\right\}\right\}<+\infty$, it follows from \eqref{thm1.8-1} that for a.e. $\xi\in \bx$,
$$\dashint_{[x_j(\xi), x_{j+1}(\xi)]} |f(x)|\, d\mu(x)\rightarrow 0\ \ \text{and}\ \ \int_{[x_j(\xi), x_{j+1}(\xi)]} |g_u|\, ds\rightarrow 0,$$
as $j\rightarrow \infty$. Since for any $y, z\in [x_j(\xi), x_{j+1}(\xi)]$, we have
$$|f(y)-f(z)|\leq \int_{[x_j(\xi), x_{j+1}(\xi)]}g_u\, ds\leq \int_{[x_j(\xi), x_{j+1}(\xi)]} |g_u|\, ds,$$
we obtain that
$$\sup_{z\in [x_j(\xi), x_{j+1}(\xi)]} |f(z)| \leq \dashint_{[x_j(\xi), x_{j+1}(\xi)]} |f(x)|\, d\mu(x)+\int_{[x_j(\xi), x_{j+1}(\xi)]} |g_u|\, ds \rightarrow 0,$$
as $j\rightarrow \infty$ for a.e. $\xi\in\bx$. Hence, for a.e. $\xi\in\bx$, 
$$\limsup_{[0, \xi)\ni x\rightarrow \xi} |f(x)| =0,$$
which implies $\Tr |f| =0$. Thus, $\Tr f$ exists and $\Tr f=0$.

(ii) If $\sup_j\left\{\min\left\{\frac1{K^j\mu_j}, \frac{{\lambda_j}^p} {K^j\mu_j}\right\}\right\}=+\infty$, then there exists an increasing sequence $\{n_k\}_{k\in\N}$ such that
$$\frac1{K^{n_k}\mu_{n_k}}\geq 2^k,\ \ \ \frac{{\lambda_{n_k}}^p} {K^{n_k}\mu_{n_k}}\geq 2^k\ \ \forall \ k\in \N.$$
For any $k\in \N$, any edge $[x_1, x_2]\subset X$ with $|x_1|=n_k$ and $|y_1|=n_k+1$, let $z\in[x_1, x_2]$ be the midpoint. Then we define  $u(x_1)=u(x_2)=0$, $u(z)=1$ and extend $u$ linearly on $[x_1, z]$ and $[z, x_2]$.  For any $y\in X$ with $ |y|\notin [n_k, n_{k+1}]$ for any $k\in \N$, define $u(y)=0$. Then the $L^p$-estimate of $u$ is 
$$\int_X |u|^p\, d\mu=\sum_{k\in\N} \int_{\{x\in X: n_k\leq |x|\leq n_k+1\}} |u|^p\, d\mu\leq \sum_{k\in\N} \mu_{n_k}K^{n_k}\leq \sum_{k\in\N} 2^{-k}<+\infty.$$
Moreover, since $u$ is linear on $[x_1, z]$ and on $[z, x_2]$, we have $g_u\leq 2/\lambda_{n_k}$. Hence
$$\int_X |g_u|^p\, d\mu =\sum_{k\in\N} \int_{\{x\in X: n_k\leq |x|\leq n_k+1\}} |g_u|^p\, d\mu \lesssim 2^p\sum_{k\in\N} \frac{K^{n_k}\mu_{n_k}}{{\lambda_{n_k}}^p} \leq \sum_{k\in\N} 2^{p-k}<+\infty.$$
Thus, $u\in N^{1, p}(X)$ and it is easy to check that $\lim_{[0, \xi)\ni x\rarrow \xi} u(x)$ {does not exist}, {for any} $\xi\in \bx.$ 
\end{proof}

Let us give a concrete example.
\begin{example}\rm\label{not-necessary}
Let $\mu_j=K^{-j}$ and $\lambda_j=j^{\frac{1-p}{p}}$. Then $\mu(X)=+\infty$ and
$$\sup_j\left\{\max\left\{\frac1{K^j\mu_j}, \frac{{\lambda_j}^p} {K^j\mu_j}\right\}\right\} =1 <+\infty.$$
Hence it follows from Lemma \ref{thm-infinity} that $\Tr f$ exists for each $f\in N^{1, p}(X)$. But 
for $p>1$,  we have
$$R_p=\sum_{j=0}^{+\infty} \frac{1}{j}=+\infty.$$ 
Thus, for $p>1$, $R_p<+\infty$ is not a necessary condition for $\Tr f$ to exist for all $f\in N^{1, p}(X)$ when $\mu(X)=+\infty$.
\end{example}

The following example implies that the conditions obtained in Lemma \ref{thm-infinity} are not  characterizations of the existence or non-existence of traces.
\begin{example}\label{example1}\rm
Let $1<p<\infty$ and fix a  $K$-regular tree $X$. We construct measures $\mu^1=\{\mu^1_j\}_{j\in\N}$, $\mu^2=\{\mu^2_j\}_{j\in\N}$ and $\lambda^1=\{\lambda^1_j\}_{j\in\N}$, $\lambda^2=\{\lambda^2_j\}_{j\in\N}$ respectively, satisfying $\mu^1(X_1)=\infty=\mu^2(X_2)$ and  
$$\sup_j\left\{\max\left\{\frac1{K^j\mu^i_j} , \frac{{(\lambda^i_j)}^p} {K^j\mu^i_j}\right\}\right\}=+\infty, \ \  \sup_j\left\{\min\left\{\frac1{K^j\mu^i_j}, \frac{{(\lambda^i_j)}^p} {K^j\mu^i_j}\right\}\right\}<+\infty, \ \ \forall\  i=1, 2,$$
such that $\Tr f$ exists for any $f\in N^{1, p}(X, ds^1, d\mu^1)$ and that there exists a function $u\in N^{1, p}(X, ds^2, d\mu^2)$ for which $\Tr u$ does not 
exist.

Let $\mu_j^1=K^{-j}j^{-1}$. Then 
$$\mu_1(X)=\sum_{j\in\N} \mu_j^1 K^j=\sum_{j\in\N}j^{-1}=+\infty$$ and 
$$\sup_j\left\{\max\left\{\frac1{K^j\mu^1_j} , \frac{{(\lambda^1_j)}^p} {K^j\mu^1_j}\right\}\right\}\geq \sup_j\left\{\frac1{K^j\mu^1_j}\right\}=+\infty, \ \ \ \forall \ i=1, 2.$$
Let $(\lambda_j^1)^p=2^{-j} j^{-1}$. Then we obtain
$$\sup_j\left\{\min\left\{\frac1{K^j\mu^1_j} , \frac{{(\lambda^1_j)}^p} {K^j\mu^1_j}\right\}\right\}\leq \sup_j\left\{\frac{{(\lambda^1_j)}^p} {K^j\mu^1_j}\right\}<+\infty.$$ 
Hence $(X, ds^1, d\mu^1)$ satisfies the asserted conditions 
and $R_p((X, ds^1, d\mu^1))<+\infty$. It follows from Corollary \ref{trace-pp} that $\Tr f$ exists for any $f\in N^{1, p}(X, ds^1, d\mu^1)$.

For $X_2$, let $\mu_j^2=K^{-j}$ and $\lambda_j^2=2^j$. Then we have
$$\sup_j\left\{\max\left\{\frac1{K^j\mu^2_j} , \frac{{(\lambda^2_j)}^p} {K^j\mu^2_j}\right\}\right\}\geq \sup_j\left\{\frac{{(\lambda^2_j)}^p} {K^j\mu^2_j}\right\}=+\infty$$
and
$$\sup_j\left\{\min\left\{\frac1{K^j\mu^2_j} , \frac{{(\lambda^2_j)}^p} {K^j\mu^2_j}\right\}\right\}\leq \sup_j\left\{\frac1{K^j\mu^2_j}\right\}=1<+\infty.$$

Next, we will construct a function $f\in N^{1, p}(X,ds^2,d\mu^2)$ such that $\Tr f$ does not exist. 
For $j\in\N$, let $2\beta_j=1/\lambda^2_j=2^{-j}\leq 1$. For any edge $[x, y]$ on $X$ with $|x|=j$ and $|y|=j+1$, we define the value of $f$ on the point $z_1\in [x, y]$ with $|z_1|=j+\beta_k$ as $1$. Let $z_2\in[x, y]$ be the point with $|z_2|=j+2\beta_k$. Then we define $f$ to be linear on $[x, z_1]$ and $[z_1, z_2]$ with $f(t)=0$ for any $t\in \{x\}\cup[z_2, y]$. Then we know that $|f|\leq 1$, $g_f\leq 1/(\beta_j\lambda^2_j)=2$ on $[x, z_2]$ and that $|f|=g_f=0$ on $[z_2, y]$. Hence we obtain the estimate
$$\int_{X_2} |f|^p+g_f^p\, d\mu^2\lesssim \sum_{j\in\N} {2\beta_j\mu_j^2 K^j} =2 \sum_{j\in\N} \frac{\mu_j^2 K^j}{\lambda_j^2}  =2\sum_{j\in\N} 2^{-j}<+\infty.$$
Thus, $f\in N^{1, p}(X_2, ds^2, d\mu^2)$. Moreover, for any $\xi\in\bx$, by the definition of $f$, it is easy to find two sequences $\{x_n\}_{n\in\N}$ and $\{y_n\}_{n\in\N}$ on the geodesic $[0, \xi)$ such that
$$\lim_{n\rightarrow \infty} f(x_{n})=0\ \ \text{and \ \ } \lim_{n\rightarrow \infty} f(y_{n})=1,$$
which implies that $\lim_{[0, \xi)\ni x\rarrow \xi} f(x)$ {does not exist}, {for any} $\xi\in \bx$. Hence $\Tr f$ does not exist.
\end{example}

The above results of Newtonian space $N^{1, p}(X)$ with  $\mu(X)=\infty$ and $p>1$ lead to an open question: For $\mu(X)=\infty$ and $p>1$, what conditions (only depending on $\lambda$, $\mu$ and $p$) give a full characterization of the existence of traces for $N^{1, p}(X)$?

\noindent{\bf }

\section{Density}\label{density-results}
In this section, we focus on the density properties of compactly supported functions in $N^{1, p}(X)$ and in $\dot N^{1, p}(X)$, $1\leq p<\infty$. The function $\mathbf{1}$ is defined by $\mathbf{1}(x)=1$ for all $x$ in $X$ and we abuse the notation by using $\nabla u$ to denote $g_u$ if needed for convenience.

Our first result is an analog of the corresponding result for 
infinite networks \cite{Y77}, also see \cite{ns}.

\begin{lem}\label{lemma1} Let $1 \leq p<\infty$ and assume that $\mu(X)<\infty$. Then we have that
\[
N^{1,p}_0(X)=N^{1,p}(X) \iff
\mathbf{1}\in N^{1,p}_0(X).
\]
\end{lem}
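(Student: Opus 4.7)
The plan is to prove the nontrivial implication via a two-stage approximation: first truncate unbounded Sobolev functions vertically, then multiply by cutoffs that approximate $\mathbf{1}$. The forward direction ($\Rightarrow$) is immediate: since $\mu(X)<\infty$, the constant function $\mathbf{1}$ lies in $L^p(X)$ and admits $0$ as an upper gradient, so $\mathbf{1}\in N^{1,p}(X)$; assuming equality with $N^{1,p}_0(X)$ then gives $\mathbf{1}\in N^{1,p}_0(X)$.

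For the reverse direction, I would assume $\mathbf{1}\in N^{1,p}_0(X)$ and pick compactly supported $\varphi_n\in N^{1,p}(X)$ with $\varphi_n\to\mathbf{1}$ in $N^{1,p}(X)$. After replacing $\varphi_n$ by $\max(0,\min(\varphi_n,1))$, which is a contraction on $N^{1,p}$ and preserves compact support, I may assume $0\le\varphi_n\le 1$. Since $\mathbf{1}$ has minimal upper gradient zero, $g_{\varphi_n-\mathbf{1}}=g_{\varphi_n}$, so $\|g_{\varphi_n}\|_{L^p(X)}\to 0$ and $\|\varphi_n-\mathbf{1}\|_{L^p(X)}\to 0$.

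Next, given $u\in N^{1,p}(X)$, I would reduce to the bounded case by vertical truncation. Set $u_k:=\max(-k,\min(u,k))$; then $|u_k|\le|u|\in L^p$, $u_k\to u$ pointwise a.e., and the minimal upper gradient obeys $g_{u_k}=g_u\chi_{\{|u|<k\}}\le g_u$, while $u-u_k$ has upper gradient $g_u\chi_{\{|u|\ge k\}}$. Dominated convergence (majorant $g_u\in L^p$) then yields $u_k\to u$ in $N^{1,p}(X)$, so it suffices to show that each bounded $u\in N^{1,p}(X)\cap L^\infty(X)$ lies in $N^{1,p}_0(X)$.

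For such bounded $u$, put $u_n:=u\varphi_n$; each $u_n$ is supported in $\supp\varphi_n$, hence compactly supported. The Leibniz rule for upper gradients provides $|u|\,g_{\varphi_n}+|\varphi_n-1|\,g_u$ as an upper gradient of $u_n-u=u(\varphi_n-\mathbf{1})$. The first summand has $L^p$-norm at most $\|u\|_{L^\infty}\|g_{\varphi_n}\|_{L^p}\to 0$, while the second tends to $0$ by dominated convergence after passing to a subsequence along which $\varphi_n\to 1$ a.e.\ (using $|\varphi_n-1|\le 1$ and $g_u\in L^p$). Similarly $\|u_n-u\|_{L^p}\le\|u\|_{L^\infty}\|\varphi_n-\mathbf{1}\|_{L^p}\to 0$, so $u_n\to u$ in $N^{1,p}(X)$ along the subsequence, whence $u\in N^{1,p}_0(X)$. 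The only delicate points are the Leibniz rule for minimal upper gradients and the a.e.\ subsequence extraction; both are standard in the Newtonian setting, so I do not anticipate a serious obstacle.
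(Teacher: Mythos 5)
Your argument is correct, but it reaches the conclusion by a genuinely different route than the paper. The paper does not reduce to bounded $u$ once and for all; instead it first upgrades the approximating cutoffs: using the positivity of the modulus of nontrivial curve families (the estimates behind Proposition \ref{modulus-0}), it shows that after passing to a subsequence the cutoffs may be assumed $\geq 1/2$ on $X^n$, and then replaces them by $\mathbf{\bar 1}_n=\min\{2\cdot\mathbf{1}_n,\mathbf{1}\}$, which is identically $1$ on $X^n$. It then truncates $u$ at a level $a_n=\|\mathbf{\bar 1}_n-\mathbf{1}\|_{N^{1,p}(X)}^{-1/2}$ tied to the convergence rate, so that the cross term $a_n\|\nabla(\mathbf{\bar 1}_n-\mathbf{1})\|_{L^p(X)}\leq\|\mathbf{\bar 1}_n-\mathbf{1}\|_{N^{1,p}(X)}^{1/2}$ vanishes, and controls the term $|\mathbf{\bar 1}_n-\mathbf{1}|\,|\nabla u_n|$ by $|\nabla u_n|\chi_{X\setminus X^n}$, which is where the property $\mathbf{\bar 1}_n\equiv 1$ on $X^n$ is essential. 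Your version decouples the two limits: a fixed vertical truncation $u_k\to u$ (legitimate since $N^{1,p}_0(X)$ is closed in $N^{1,p}(X)$), followed by multiplication of a bounded $u$ by the raw cutoffs, with the term $|u|\,g_{\varphi_n}$ killed by $\|u\|_{L^\infty}$ and the term $|\varphi_n-\mathbf{1}|\,g_u$ killed by dominated convergence along an a.e.-convergent subsequence. This avoids both the modulus argument and the rate-matched truncation, and is shorter and more standard; what the paper's construction buys is an explicit single sequence $u_n\mathbf{\bar 1}_n\to u$ with no subsequence extraction, and a normalization of the cutoffs ($\equiv 1$ on large balls) obtained by a device that the paper reuses in the proof of Proposition \ref{N-subset}. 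Two small points to make explicit in a final write-up: the Leibniz estimate $g_{uv}\leq |u|g_v+|v|g_u$ is justified here because every $N^{1,p}$ function on the tree is locally absolutely continuous on edges, and the a.e.-convergent subsequence of $\{\varphi_n\}$ is extracted from the $L^p(\mu)$ convergence $\varphi_n\to\mathbf{1}$. Neither is a gap.
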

\begin{proof}
Since it follows from  $\mu(X)<\infty$ that $\mathbf{1}\in N^{1,p}(X)$, we obtain that $N^{1,p}_0(X)=N^{1,p}(X)$  implies $\mathbf{1}\in N^{1,p}_0(X)$.

Towards the other direction, the hypothesis $\mathbf{1}\in N^{1,p}_0(X)$ gives a  family of compactly supported functions $\{\mathbf{1}_n\}_{n\in\mathbb{N}}$   in $N^{1,p}(X)$ such that $\mathbf{1}_n\to \mathbf{1}$ { in }$ N^{1,p}(X)$ as $n\rightarrow \infty$. Recall that $X^m:=\{x\in X: |x|<m\}$ for any $m\in \N$. Without loss of generality we may assume that $\mathbf{1}_n$ is nonnegative for any $n\in \N$ and that 
$$\|\mathbf{1}_n-\mathbf{1}\|^p_{N^{1, p}(X)}< \frac{1}{4^p}\mu(X^1),\ \ \forall \ n\in\N.$$
We claim that for any $n\in \N$, there exists a point $x_n$ with $x_n\in X^1$ such that $|1-\mathbf{1}_n(x_n)|<1/4$. If not, then we have $|1-\mathbf{1}_n(x)|\geq\frac{1}{4}$ for any $x\in X^1$. Hence we obtain that
\[\| 1-u\|^p_{N^{1,p}(X)}\geq \| 1-u\|^p_{N^{1,p}(X^1)} \geq \frac{1}{4^p} \mu(X^1),\]
which is a contradiction. By the triangle inequality, we have  $1-\mathbf{1}_n(x_n)\leq |1-\mathbf{1}_n(x_n)|<\frac 14$, and hence $\mathbf{1}_n(x_n)>\frac34$. 

Next, we claim that we may assume $\mathbf{1}_n(x)>1/2$ for all $x\in X^n$ by selecting a subsequence of $\{\mathbf{1}_n\}_{n\in\mathbb{N}}$ if necessary. Assume 
that this claim is not true. Then there exists $N\in \N$ such that for any 
$n\in \N$, there exists a point $y_n\in X^N$ with $\mathbf{1}_n(y_n)\leq 1/2$. Hence for any $n\in \N$, we have found two points $x_n, y_n\in X^N$ such that $|\mathbf{1}(x_n)_n-\mathbf{1}_n(y_n)|\geq 1/4$. Let $\gamma=[x_n, y_n]$ be the geodesic connecting $x_n$ and $y_n$. Then 
$$\int_\gamma \nabla(\mathbf{1}_n)\, ds\geq 1/4\ \ \text{for any}\ \ n\in \N.$$
By an argument similar to that for the estimate \eqref{Mod_p} and \eqref{Mod_1}, we have that there exists a constant $C(N, p, \lambda,\mu)>0$ such that
 \[\int_X|\nabla(\mathbf{1}-\mathbf{1}_n)|^p\, d\mu= \int_X |\nabla(\mathbf{1}_n)|^p\, d\mu\geq C(N, p, \lambda, \mu)>0\ \ \text{for any}\ \ n\in \N,\]

which is a contradiction to $\mathbf{1}_n\rightarrow \mathbf{1}$ in $N^{1, p}(X)$.

Thus, from the arguments above, we may assume that there exists a family of compactly supported functions $\{\mathbf{1}_n\}_{n\in\mathbb{N}}$   in $N^{1,p}(X)$ such that
$$\begin{cases}\mathbf{1}_n\to \mathbf{1} \text{ in } N^{1,p}(X)\text{ as }n\to \infty,\\
\mathbf{1}_n(x)\geq \frac{1}{2} \text{ for any } x\in X^n .
\end{cases}$$

We define $\mathbf{\bar 1}_n:=\min\{2 \cdot \mathbf{1}_n,\mathbf{1}\}$ for all $n\in \mathbb{N}$. Then the family $(\mathbf{\bar 1}_n)_{n\in\mathbb{N}}$ satisfies 
\begin{equation}\label{1nto1}
\begin{cases}
\mathbf{\bar 1}_n\to \mathbf{1} \text{ in }N^{1,p}(X) \text{ as }n\to\infty,\\
\mathbf{\bar 1}_n\equiv 1 \text{ in }X^n \\
\mathbf{\bar 1}_n \text{ is a function with compact support.  } 
\end{cases}
\end{equation}
Given a function $u$ in $N^{1,p}(X)$, let us show that $u_n \mathbf{\bar 1}_n \rightarrow u$ in $N^{1,p}(X)$ where $u_n(x)$ is a truncation  of $u$ with respect to $a_n:=\|\mathbf{\bar 1}_n-\mathbf{1}\|^{-1/2}_{N^{1,p}(X)}$, namely
$$u_n(x)=
\begin{cases}\frac{u}{|u|}a_n & \text{ if }|u|\geq a_n\\
u&\text{ if }|u|\leq a_n
\end{cases}.
$$
From the basic properties of truncation (see for instance \cite[Section 7.1]{pekka}), we have that 

\begin{equation}
\label{untou}
\begin{cases}u_n\to u \text{ in }N^{1,p}{(X)} \text{ as }n\to\infty,\\
|u_n(x)|\leq a_n , \\
|\nabla u_n|\leq 3|\nabla u|.
\end{cases}
\end{equation}

We first show that $u_n\mathbf{\bar 1}_n\to u$ in $L^p(X)$ as $n\to \infty$. By the triangle inequality, it follows from \eqref{1nto1} and \eqref{untou} that
\begin{align*}
\|u_n\mathbf{\bar 1}_n-u\|_{L^{p}(X)}& \leq \|u_n\mathbf{\bar 1}_n-u_n \|_{L^{p}(X)}+\|u_n-u\|_{L^{p}(X)}\\
&\leq  a_n \|\mathbf{\bar 1}_n-\mathbf{1}\|_{N^{1,p}(X)}+\|u_n-u\|_{L^{p}(X)}\\
&= \| \mathbf{\bar{1}_n-\mathbf{1}}\|_{N^{1,p}(X)}^{1/2}+\|u_n-u\|_{L^{p}(X)}\rightarrow 0\  \text{ as } \ n\rightarrow\infty.
\end{align*}
Recall that every function in $N^{1, p}(X)$ is locally absolutely continuous, see Section \ref{newtonian}. By the product rule of locally absolutely continuous functions, we obtain that 
  \begin{align*}
 |\nabla (u_n\mathbf{\bar{1}_n}-u)|&=|\nabla (u_n\mathbf{\bar 1}_n-u_n+u_n-u)|\\
 &\leq |u_n||\nabla (\mathbf{\bar 1}_n-\mathbf{1})|+|\mathbf{\bar 1}_n-\mathbf{1}||\nabla u_n|+|\nabla (u_n-u)|\\
 &\leq a_n |\nabla (\mathbf{\bar 1}_n-\mathbf{1})|+|\nabla u_n|\chi_{X\setminus X^n}+|\nabla (u_n-u)|.
 \end{align*}
Hence we obtain from the triangle inequality that
\begin{align*}
\|\nabla (u_n\mathbf{\bar 1}_n-u)\|_{L^{p}(X)}& \leq  
 a_n\|\nabla (\mathbf{\bar 1}_n-\mathbf{1})\|_{L^{p}(X)}+\|\nabla u_n\|_{L^{p}(X\setminus X^n)}+\|\nabla (u_n-u)\|_{L^{p}(X)}\\
&\leq \|\mathbf{\bar 1}_n-\mathbf{1}\|^{1/2}_{N^{1,p}(X)}+3\|\nabla u\|_{L^{p}(X\setminus X^n)}+\|\nabla( u_n-u)\|_{L^{p}(X)},
\end{align*}
which tends to $0$ as $n\to \infty$. Therefore, $u_n\mathbf{\bar{1}}_n\to u$ in $N^{1,p}(X)$ as $n\to \infty$. Since the support of $u_n\mathbf{\bar{1}}_n$ is compact, it follows from the definition of $N^{1,p}_0(X)$ that $u\in N^{1,p}_0(X)$, and hence $N^{1,p}_0(X)=N^{1, p}(X)$.
\end{proof}

Notice that  $\mathbf{1}\in \dot N^{1, p}(X)$ no matter if $\mu(X)$ is finite or not. By slightly modifying the previous proof, we obtain the following result.
\begin{cor}\label{cor1}
Let $1 \leq p<\infty$. Then the following statements are equivalent
\[
\dot N^{1,p}_0(X)=\dot N^{1,p}(X)\iff
\mathbf{1}\in \dot N^{1,p}_0(X)
\]
\end{cor}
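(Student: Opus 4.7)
The plan is to imitate the proof of Lemma \ref{lemma1}, replacing the inhomogeneous norm with the homogeneous one, and noting that the assumption $\mu(X)<\infty$ from Lemma \ref{lemma1} was only used to ensure $\mathbf{1}\in N^{1,p}(X)$. Here, $\mathbf{1}\in \dot N^{1,p}(X)$ automatically since $\|\mathbf{1}\|_{\dot N^{1,p}(X)}=|\mathbf{1}(0)|=1$ and the zero function is an upper gradient. Hence the forward implication is immediate: if $\dot N^{1,p}_0(X)=\dot N^{1,p}(X)$, then $\mathbf{1}\in\dot N^{1,p}_0(X)$.

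For the reverse implication, suppose $\mathbf{1}\in \dot N^{1,p}_0(X)$, giving a sequence $\{\mathbf{1}_n\}$ of nonnegative compactly supported functions in $\dot N^{1,p}(X)$ with $\mathbf{1}_n\to \mathbf{1}$ in $\dot N^{1,p}(X)$. Since $|\mathbf{1}_n(0)-1|\le\|\mathbf{1}_n-\mathbf{1}\|_{\dot N^{1,p}(X)}\to 0$, we may assume $\mathbf{1}_n(0)>3/4$ for all $n$; this replaces the point $x_n\in X^1$ of the original argument, with the root playing the role of $x_n$. Next, arguing by contradiction exactly as in Lemma \ref{lemma1} and using estimates analogous to \eqref{Mod_p} and \eqref{Mod_1} applied to $\|\nabla\mathbf{1}_n\|_{L^p(X)}\to 0$, we extract a subsequence for which $\mathbf{1}_n(x)\ge 1/2$ for all $x\in X^n$. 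Setting $\mathbf{\bar 1}_n:=\min\{2\mathbf{1}_n,\mathbf{1}\}$ produces compactly supported functions with $\mathbf{\bar 1}_n\equiv 1$ on $X^n$ and $\mathbf{\bar 1}_n\to\mathbf{1}$ in $\dot N^{1,p}(X)$.

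Given $u\in \dot N^{1,p}(X)$, we truncate at level $a_n:=\|\mathbf{\bar 1}_n-\mathbf{1}\|^{-1/2}_{\dot N^{1,p}(X)}\to\infty$ to get $u_n$, and approximate $u$ by $u_n\mathbf{\bar 1}_n$, which is compactly supported. Because $\mathbf{\bar 1}_n(0)=1$ and $u_n(0)=u(0)$ for $n$ large (as $a_n\to\infty$), the homogeneous norm reduces to controlling the gradient piece. The product rule for locally absolutely continuous functions yields
\[
|\nabla(u_n\mathbf{\bar 1}_n-u)|\le a_n|\nabla(\mathbf{\bar 1}_n-\mathbf{1})|+|\nabla u_n|\chi_{X\setminus X^n}+|\nabla(u_n-u)|,
\]
and taking $L^p$-norms gives a bound by $\|\mathbf{\bar 1}_n-\mathbf{1}\|^{1/2}_{\dot N^{1,p}(X)}+3\|\nabla u\|_{L^p(X\setminus X^n)}+\|\nabla(u_n-u)\|_{L^p(X)}$, each term tending to $0$.

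The main subtlety, and where the argument departs from Lemma \ref{lemma1}, is in justifying $\|\nabla(u_n-u)\|_{L^p(X)}\to 0$ without having $u\in L^p(X)$: since $|\nabla(u_n-u)|=|\nabla u|\chi_{\{|u|>a_n\}}$ and $u$ is continuous hence finite everywhere, the sets $\{|u|>a_n\}$ shrink to a $\mu$-null set, so dominated convergence applied to $|\nabla u|^p\in L^1(X)$ closes the argument. This establishes $u_n\mathbf{\bar 1}_n\to u$ in $\dot N^{1,p}(X)$, and hence $u\in\dot N^{1,p}_0(X)$.
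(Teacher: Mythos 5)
Your proof is correct and follows essentially the route the paper intends: the paper proves Corollary \ref{cor1} by "slightly modifying" the proof of Lemma \ref{lemma1}, and your argument is exactly that modification, with the two genuinely new points (the root value being controlled directly by the homogeneous norm, and the convergence $\|\nabla(u_n-u)\|_{L^p(X)}\to 0$ justified via dominated convergence without $u\in L^p(X)$) correctly identified and handled.
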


Applying Lemma \ref{lemma1}, we obtain our first density result.

\begin{prop}\label{lemma2} Let $1 \leq p<\infty$ and assume that  $\mu(X)<\infty$.
	Suppose additionally that $R_p=\infty$. Then we have that
	$$N^{1, p}_{0}(X)={N}^{1, p}(X).$$
\end{prop}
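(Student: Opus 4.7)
The plan is to reduce everything to the condition $\mathbf{1}\in N^{1,p}_0(X)$ via Lemma \ref{lemma1}, and then to construct an explicit approximating sequence exploiting $R_p=\infty$. Since $\mu(X)<\infty$ we have $\mathbf{1}\in N^{1,p}(X)$, so the task is purely to produce compactly supported functions $\mathbf{1}_n\in N^{1,p}(X)$ with $\mathbf{1}_n\to \mathbf{1}$ in $N^{1,p}(X)$. I would take radial cutoffs $\mathbf{1}_n(x)=\phi_n(|x|)$, where $\phi_n:[0,\infty)\to[0,1]$ is continuous with $\phi_n(0)=1$ and $\phi_n\equiv 0$ on $[N_n,\infty)$ for some $N_n\nearrow\infty$. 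For such a radial function an upper gradient is $g_{\mathbf{1}_n}(x)=|\phi_n'(|x|)|/\lambda(|x|)$, and the relevant integrals become one-dimensional with weight $\mu(t)K^{j(t)}$ (the factor $K^{j(t)}$ accounting for the $K^{j}$ edges at level $j$). Because $\mu(X)<\infty$ and $|\mathbf{1}_n|\leq 1$, dominated convergence reduces the $L^p$ part to verifying $\phi_n\to 1$ pointwise.

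For $p>1$ the natural choice is the extremal profile for the weighted one-dimensional capacity: set
\[
|\phi_n'(t)| \;=\; c_n\,\lambda(t)^{\frac{p}{p-1}}\,\mu(t)^{\frac{1}{1-p}}\,K^{\frac{j(t)}{1-p}}\,\chi_{[0,N_n]}(t),
\]
with $c_n$ chosen so that $\int_0^{N_n}|\phi_n'|\,dt=1$ (hence $\phi_n(N_n)=0$). A direct computation — equivalently, the equality case of H\"older's inequality used in Theorem \ref{trace} — yields
\[
\int_X g_{\mathbf{1}_n}^p\, d\mu \;\approx\; \Big(\int_0^{N_n} \lambda(t)^{\frac{p}{p-1}}\mu(t)^{\frac{1}{1-p}} K^{\frac{j(t)}{1-p}}\, dt\Big)^{-(p-1)},
\]
and since $K^{j(t)}\approx K^{t}$ the bracketed integral tends to $R_p=\infty$ as $N_n\to\infty$, making the left side tend to $0$. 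For $p=1$, the hypothesis $R_1=\|\lambda/(\mu K^{t})\|_{L^\infty}=\infty$ lets me pick, for each $n$, a bounded set $E_n\subset[0,\infty)$ with $|E_n|>0$ on which $\lambda/(\mu K^{j(t)})\geq 2^n$, and take $|\phi_n'|=|E_n|^{-1}\chi_{E_n}$; this gives $\int_X g_{\mathbf{1}_n}\, d\mu\leq 2^{-n}\to 0$, mirroring the construction in Theorem \ref{trace-example}.

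In both regimes, pointwise convergence $\phi_n(t)\to 1$ comes essentially for free: the normalizing constants ($c_n$ or $|E_n|^{-1}$) are spread so thin that $\phi_n(t)=1-\int_0^t|\phi_n'|$ tends to $1$ for every fixed $t$. Combined with the $L^p$ reduction via dominated convergence, this delivers $\mathbf{1}_n\to\mathbf{1}$ in $N^{1,p}(X)$ and therefore $\mathbf{1}\in N^{1,p}_0(X)$, so Lemma \ref{lemma1} concludes the proof. The main technical point to keep an eye on is the H\"older-equality calculation for $p>1$ and the comparison between $K^t$ and $K^{j(t)}$ to rewrite the isoperimetric integral as the partial analogue of $R_p$; these are routine but essential for tying the construction back to the hypothesis $R_p=\infty$.
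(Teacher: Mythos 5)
Your proof is correct and follows essentially the same route as the paper's: reduction to $\mathbf{1}\in N^{1,p}_0(X)$ via Lemma \ref{lemma1}, cutoffs built from the extremal profile $\lambda^{p/(p-1)}\mu^{1/(1-p)}K^{t/(1-p)}$ for $p>1$ and from superlevel sets of $\lambda/(\mu K^{t})$ for $p=1$, with the same computation $\int_X g^p\,d\mu\approx\bigl(\int_0^{N_n}\lambda^{p/(p-1)}\mu^{1/(1-p)}K^{t/(1-p)}\,dt\bigr)^{1-p}\to 0$; the only cosmetic difference is that your cutoffs decay from $t=0$ (handled by dominated convergence) whereas the paper's equal $1$ on $X^n$. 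One small point to make explicit in the $p=1$ case: the pointwise convergence $\phi_n(t)\to 1$ is not really because the normalizing constant is ``spread thin,'' but because the standing assumption $\lambda/\mu\in L^{\infty}_{\rm loc}$ forces the sets $E_n$ to escape to infinity, so $E_n\cap[0,t]$ is eventually null for each fixed $t$ --- which is precisely why the paper chooses $E_k\subset[k,\infty)$.
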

\begin{proof}

    It follows from Lemma \ref{lemma1} that 
	it suffices to construct a sequence of compactly supported $N^{1, p}$-functions  which converges to $\mathbf{1}$ in $N^{1, p}(X)$.

	For $p>1$ and 
	\begin{equation}\notag 
	R_p=\int_{0}^\infty \lambda^{\frac{p}{p-1}}(t)\mu^{\frac{1}{1-p}}(t)K^{\frac{t}{1-p}}dt=\infty,
	\end{equation}
	
	we define the family of functions $\{\varphi_n\}_{n\in\mathbb{N}}$ as follows. For each $n \in \mathbb{N}$, let $r_n>n$ be an integer such that 
	\begin{equation}\label{value-r_n}
	\int_{n}^{r_n}\lambda^{\frac{p}{p-1}}(t)\mu^{\frac{1}{1-p}}(t)K^{\frac{t}{1-p}}dt\geq 2^n.
	\end{equation}
We set $\varphi_n (x)=1$ for all $x\in X^n$, $\varphi_n(x)=0$ for all $x\in X\setminus X^{r_n}$ and 
	$$
	\varphi_n(x)=1-\frac{\int_{n}^{|x|}\lambda^{\frac{p}{p-1}}(t)\mu^{\frac{1}{1-p}}(t)K^{\frac{t}{1-p}}dt}{\int_{n}^{r_n}\lambda^{\frac{p}{p-1}}(t)\mu^{\frac{1}{1-p}}(t)K^{\frac{t}{1-p}}dt}
	$$ for all $x\in  X^{r_n}\setminus X^n$. Since $\lambda^p/\mu\in L^{1/(p-1)}_{\rm loc}([0, \infty))$ and $\lambda, 1/\mu>0$, then $\varphi_n$ is well-defined.
     It is easy to check that $\varphi_n$ is compactly supported.
	
	By the construction of $\varphi_n$, an easy computation shows that 
	\begin{equation}\label{nablaun1}
	\nabla (\varphi_n (x)-1)=0 
	\end{equation}
	for all $x\in (X^n)\cup (X\setminus X^{r_n})$ and that 
	\begin{equation}\label{nablaun2}
	|\nabla (\varphi_n(x)-1)|\leq \frac{1}{\lambda(x)}\frac{\lambda^{\frac{p}{p-1}}(x)\mu^{\frac{1}{1-p}}(x)K^{\frac{|x|}{1-p}}}{\int_{n}^{r_n}\lambda^{\frac{p}{p-1}}(t)\mu^{\frac{1}{1-p}}(t)K^{\frac{t}{1-p}}dt}=\frac{\lambda^{\frac{1}{p-1}}(x)\mu^{\frac{1}{1-p}}(x)K^{\frac{|x|}{1-p}}}{\int_{n}^{r_n}\lambda^{\frac{p}{p-1}}(t)\mu^{\frac{1}{1-p}}(t)K^{\frac{t}{1-p}}dt}
	\end{equation}
	for all $x\in X^{r_n}\setminus X^{n}$. 
	
	Thanks to \eqref{nablaun1} and \eqref{nablaun2}, we obtain the estimate
	\begin{align*}
	\int_X|\nabla(\varphi_n-\mathbf{1})|^pd\mu&= \int_{X^{r_n}\setminus X^n}|\nabla(\varphi_n-\mathbf{1})|^pd\mu\\
	&\approx \int_{n}^{r_n}K^{t}\mu(t)\left(\frac{\lambda^{\frac{1}{p-1}}(t)\mu^{\frac{1}{1-p}}(t)K^{\frac{t}{1-p}}}{\int_{n}^{r_n}\lambda^{\frac{p}{p-1}}(t)\mu^{\frac{1}{1-p}}(t)K^{\frac{t}{1-p}}dt}\right)^pdt\\
	&=\left(\int_{n}^{r_n}\lambda^{\frac{p}{p-1}}(t)\mu^{\frac{1}{1-p}}(t)K^{\frac{t}{1-p}}dt\right)^{1-p}.
	\end{align*}
	Since $p>1$ and \eqref{value-r_n} holds, we obtain that	
	$$\left(\int_{n}^{r_n}\lambda^{\frac{p}{p-1}}(t)\mu^{\frac{1}{1-p}}(t)K^{\frac{1}{1-p}}dt\right)^{1-p}\to 0\ \text{as}\  n\to \infty.$$
	Hence we have that  $\|\nabla (\varphi_n-\mathbf{1})\|_{L^{p}(X)}\to 0$ as $n\to\infty$. Moreover, since $|\varphi_n-\mathbf{1}|\leq 2\chi_{X\setminus X^n}$, it follows from  $\mu(X)<\infty$ that 
	$$
	\|\varphi_n(x)-\mathbf{1}\|_{L^p(X)}\leq 2 \mu(X\setminus X^n)\to 0\ \text{as}\  n\to \infty.
	$$
	Therefore, $\varphi_n \to \mathbf{1}$ in $N^{1,p}(X)$ as $n\to\infty$. 
	
	For $p=1$, since $\lambda/\mu\in L^\infty_{\rm loc}([0, \infty))$ implies that 
 $\left\|\frac{\lambda(t)}{\mu(t)K^{ t }}\right\|_{L^{\infty}([0, n) )}<\infty$ for any $n\in \N$, it follows from 
$R_1=\left \| \frac{\lambda(t)}{\mu(t)K^{t}}\right \|_{{L^1({[0,\infty)})}}=\infty$ that  the sequence of sets 
	\[E_k:=\left \{t\in [k,\infty): \frac{\lambda(t)}{\mu(t)K^{}}\geq 2^k \right \}, \ \ k\in\mathbb{N} 
	\] is a nonincreasing sequence of subset of $[0, \infty)$ and that we have 
\[|E_k|>0\ \ \text{for any}\ \ k\in\mathbb N.\]
	
	We have $E_k= \lim_{n\to \infty} E_k\cap[k,n]$ and $|E_k|=\lim_{n\to \infty}|E_k\cap[k,n]|$. Hence there exist a $k_n>k$ such that
		\[E_{k_n}:=\left \{t\in [k,k_n]: \frac{\lambda(t)}{\mu(t)K^{t}}\geq 2^k \right \} 
	\] satisfies $0<|E_{k_n}|<\infty$. 
	
	We define a sequence $\{\varphi_{k}\}$ of functions  by setting
	\[\varphi_{k}(x)=1-\frac{1}{|E_{k_n}|}\int_{k}^{|x|} \chi_{E_{k_n}}(t)dt
	\] for all $|x|\in[k,k_n]$ and $\varphi_{k}(x)=0$ on $X\setminus X^{k_n}$, $\varphi_{k}(x)=1$ on $X^k$. 
	
	It follows directly from the definition of $\varphi_{k}$ that each $\varphi_{k}$ has compact support and that
	\[|\varphi_{k}-1|\leq 2 \chi_{X\setminus X^k}, \ \ |\nabla (\varphi_{k})(x)-1)|\leq \frac{\chi_{E_{k_n}}(x)}{\lambda(x)|E_{k_n}|}.
	\]
	Hence, thanks to $\mu(X)<\infty$ and the definition of $E_{k_n}$, we obtain  that
	\begin{align*}
	\| \varphi_{k}-1\|_{N^{1,1}(X)}&= \| \varphi_{k}-1\|_{L^1(X)}+ \| \nabla(\varphi_{k}-1)\|_{L^{1}(X)}\\
	&\leq \| 2\chi_{X\setminus X^{k}}\|_{L^{1}(X)}+ \int_X \frac{\chi_{E_{k_n}}(t)}{\lambda(t)|E_{k_n}|}d\mu(t)\\
	&\lesssim 2\mu(X\setminus X^k)+\frac{1}{|E_{k_n|}}\int_{k}^{k_n} \frac{\mu(t)K^{t}}{\lambda(t)} \chi_{E_{k_n}}(t)\,dt\\
	&\leq 2\mu(X\setminus X^k)+ \frac{1}{2^k} \to 0 \text{ as }k\to \infty.
	\end{align*}
    Hence $\varphi_{k}\rightarrow \mathbf 1$ in $N^{1, 1}(X)$ as $k\rightarrow \infty$.
\end{proof}

By using the same construction of the sequence of compactly supported $N^{1, p}$-functions as the one in the above proof, we obtain the following corollary immediately from Corollary \ref{cor1}.

\begin{cor}\label{dot-N-dense}
Let $1 \leq p<\infty$. Assume $R_p=\infty$. Then we obtain that
\[\dot N^{1, p}_0(X)= \dot N^{1, p}(X).\]
\end{cor}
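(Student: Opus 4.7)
The plan is to invoke Corollary \ref{cor1}, which reduces the task to verifying $\mathbf{1}\in\dot N^{1,p}_0(X)$. For this I would simply recycle the sequence $\{\varphi_n\}$ of compactly supported functions built in the proof of Proposition \ref{lemma2}, and check that it converges to $\mathbf{1}$ in the weaker homogeneous norm.

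The key observation is that the homogeneous norm
$$\|u\|_{\dot N^{1,p}(X)}=|u(0)|+\inf_g\|g\|_{L^p(X)}$$
does not measure the global $L^p$ size of $u$, only the value at the root together with the $L^p$ norm of an upper gradient. By construction, $\varphi_n\equiv 1$ on $X^n$, so in particular $\varphi_n(0)=1=\mathbf{1}(0)$ for every $n\in\N$; hence the point-evaluation term of $\|\varphi_n-\mathbf{1}\|_{\dot N^{1,p}(X)}$ vanishes identically. Consequently the only convergence that must be established is $\|\nabla(\varphi_n-\mathbf{1})\|_{L^p(X)}\to 0$.

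This, however, is precisely the estimate carried out in the proof of Proposition \ref{lemma2}. Inspecting that argument (both the $p>1$ computation leading to the bound $\left(\int_{n}^{r_n}\lambda^{p/(p-1)}\mu^{1/(1-p)}K^{t/(1-p)}dt\right)^{1-p}$ and the $p=1$ computation giving the bound $2^{-k}$) one sees that the upper-gradient convergence relies only on $R_p=\infty$ together with the standing local integrability hypotheses on $\lambda$ and $\mu$, and never invokes $\mu(X)<\infty$. Therefore the same family $\{\varphi_n\}$ furnishes compactly supported approximations of $\mathbf{1}$ in $\dot N^{1,p}(X)$, which yields $\mathbf{1}\in\dot N^{1,p}_0(X)$, and Corollary \ref{cor1} finishes the proof.

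There is no genuine obstacle here; the main point is the observation that the finite-measure hypothesis in Proposition \ref{lemma2} is used only to handle the $\|\varphi_n-\mathbf{1}\|_{L^p(X)}$ term, and this is exactly the term that the homogeneous norm drops. The whole argument is therefore a one-line reduction once Corollary \ref{cor1} and the gradient estimates from Proposition \ref{lemma2} are in hand.
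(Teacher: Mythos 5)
Your proposal is correct and is essentially the paper's own argument: the paper proves this corollary by reusing the sequence $\{\varphi_n\}$ from Proposition \ref{lemma2} together with Corollary \ref{cor1}, exactly as you do. Your explicit observations --- that $\varphi_n(0)=1$ kills the point-evaluation term and that the finite-measure hypothesis enters only through the $L^p$ term, which the homogeneous norm discards --- are precisely the details the paper leaves implicit.
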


\begin{prop}\label{N-subset}
Let  $1 \leq p<\infty$ and assume that  $\mu(X)<\infty$. Suppose additionally that $R_p<\infty$. Then we have 
$${N}_{0}^{1, p}(X)\subsetneq{N}^{1, p}(X).$$
\end{prop}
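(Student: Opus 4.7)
The plan is to derive a contradiction from assuming equality, by showing that every function in $N^{1,p}_0(X)$ must have vanishing trace, while $\mathbf{1}$ plainly has trace $1$.

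First I would invoke Lemma \ref{lemma1}: since $\mu(X)<\infty$, proving $N^{1,p}_0(X)\subsetneq N^{1,p}(X)$ reduces to proving $\mathbf{1}\notin N^{1,p}_0(X)$. So I would argue by contradiction: suppose $\mathbf{1}\in N^{1,p}_0(X)$. Then there exists a sequence $\{\varphi_n\}\subset N^{1,p}(X)$ of functions with compact support such that $\varphi_n\to \mathbf{1}$ in $N^{1,p}(X)$.

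Next I would observe that each $\varphi_n$ has vanishing trace. Indeed, the compact support of $\varphi_n$ is contained in some $X^{N_n}=\{x\in X:\, |x|<N_n\}$, so for every $\xi\in\bx$ and every $x\in[0,\xi)$ with $|x|\geq N_n$, we have $\varphi_n(x)=0$. Hence $\Tr \varphi_n(\xi)=0$ for every $\xi\in\bx$, and in particular $\Tr \varphi_n=0$ in $L^p_\nu(\bx)$.

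Now I apply the hypothesis $R_p<\infty$: by Corollary \ref{trace-pp}, the trace operator $\Tr:N^{1,p}(X)\to L^p_\nu(\bx)$ is bounded and linear. Thus $\Tr \varphi_n\to \Tr \mathbf{1}$ in $L^p_\nu(\bx)$. Combined with the previous step, this forces $\Tr \mathbf{1}=0$ in $L^p_\nu(\bx)$. On the other hand, directly from the definition \eqref{intro-trace}, $\Tr \mathbf{1}(\xi)=1$ for every $\xi\in\bx$, so $\|\Tr \mathbf{1}\|_{L^p_\nu(\bx)}=\nu(\bx)^{1/p}=1\neq 0$. This is the sought contradiction, so $\mathbf{1}\notin N^{1,p}_0(X)$ and, by Lemma \ref{lemma1}, $N^{1,p}_0(X)\subsetneq N^{1,p}(X)$.

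The argument is short and the only nontrivial input is the boundedness of the trace operator (Corollary \ref{trace-pp}), which uses $R_p<\infty$; the rest is the routine observation that compactly supported functions have zero trace along every geodesic ray. There is no real obstacle here — the key insight is recognizing that $\Tr \mathbf{1}\equiv 1$ is a topological invariant that cannot be approximated in $L^p_\nu(\bx)$ by the zero function.
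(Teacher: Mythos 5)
Your proof is correct, and it is a genuinely streamlined alternative to the paper's argument. The paper proves Proposition \ref{N-subset} by a direct quantitative computation: assuming a compactly supported $u$ satisfies $\|1-u\|_{N^{1,p}(X)}<\varepsilon$, it locates a point $x_\xi$ on the first edge of each ray with $|u(x_\xi)|>\tfrac12$, notes that $u\to 0$ along every ray (compact support), deduces $\tfrac12<\int_{[0,\xi)}g_u\,ds$ for all $\xi$, and then re-runs the H\"older/Fubini estimate with $R_p<\infty$ to bound this by a constant times $\|g_u\|_{L^p(X)}<\varepsilon$ --- essentially re-deriving the trace inequality by hand for this one function. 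You instead black-box that computation into Corollary \ref{trace-pp}: the trace operator is bounded and linear on $N^{1,p}(X)$ when $R_p<\infty$, it annihilates every compactly supported function (a compact subset of $(X,d)$ lies in some $X^N$ because $\lambda>0$ keeps points of high level uniformly away from any fixed point of $X$ --- the same fact the paper uses implicitly when it asserts $u(x_n(\xi))\to 0$), and $\Tr\mathbf{1}\equiv 1$, so $\mathbf{1}$ cannot be an $N^{1,p}$-limit of compactly supported functions; Lemma \ref{lemma1} (really only its trivial direction, i.e.\ $\mathbf{1}\in N^{1,p}(X)$ since $\mu(X)<\infty$) finishes the job. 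What your route buys is brevity and a clear conceptual statement --- continuity of $\Tr$ plus $\Tr|_{N^{1,p}_0}=0$ versus $\Tr\mathbf{1}=1$; what the paper's route buys is self-containedness and the fact that the identical computation transfers verbatim to the homogeneous case in Corollary \ref{dot-N-subset} (your version transfers too, by citing Theorem \ref{trace} in place of Corollary \ref{trace-pp}). No gaps.
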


\begin{proof}
Suppose  ${N}_{0}^{1, p}(X)=N^{1,p}(X).$ Since $1\in N^{1,p}(X)$, it follows that for every $\varepsilon>0$, there exists a function $u\in N^{1, p}(X)$ with compact support such that 
\begin{equation}\label{1-u}
\| 1-u\|_{N^{1,p}(X)}<\varepsilon.
\end{equation}
Let $\xi\in\partial X$ be arbitrary, and $x_j:=x_j(\xi)$ be the ancestor of $\xi$ with $|x_j|=j$ and $x_0=0$.  Let $0<\epsilon<\frac 12\|\mu\|^{1/p}_{L^1([0, 1])}$. By repeating the argument in the beginning of Proof of Lemma \ref{lemma1} 
with the change that we replace $\mu(X^1)/4^p$ and $X^1$ by $\epsilon^p$ and 
$[0, x_1(\xi)]$, respectively,   we obtain the existence of  $x_\xi\in [0, x_1(\xi)]$ for which the function $u$ in \eqref{1-u} satisfies $|1-u(x_\xi)|<\frac{1}{2}$.  By the triangle inequality, we have $1-|u(x_\xi)|\leq |1-u(x_\xi)|<\frac 12$, and hence $|u(x_\xi)|>\frac12$.

Notice that $u$ has compact support. Then for any $\xi\in\bx$, we have $\lim_{n\rightarrow \infty} u(x_n(\xi))=0$ and that 
\begin{equation}\label{exam-estimate}
\frac12< \lim_{n\rightarrow \infty}|u(x_\xi)-u(x_n(\xi))| \leq \int_{[0,\xi)} g_u\, ds=\int_{[0, \xi)} g_u \frac{\lambda(x)}{\mu(x)}\, d\mu.
\end{equation}

For $p=1$, integrating over all $\xi\in\bx$, since $\nu(\bx)=1$, we obtain by Fubini's theorem that
\begin{align*}
\frac 12 &\leq \int_{\bx} \int_{X} g_u(x) \chi_{[0,\xi)}(x)\frac{\lambda(x)}{\mu(x)} \, d\mu(x)\, d\nu(\xi)\\
& =\int_X g_u(x) \frac{\lambda(x)}{\mu(x)} \left(\int_\bx \chi_{[0,\xi)}(x) \, d\nu(\xi)\right)\, d\mu(x)\\
& = \int_X g_u(x) \lambda(x)\mu(x)^{-1} \nu(I_x)\, d\mu(x),
\end{align*}
where $I_x=\{\xi\in \bx: x<\xi\}$. Since $\nu(I_X) \approx K^{-|x|}$, we obtain from $R_1<\infty$ that 
\[\frac 12\lesssim R_1 \int_X g_u(x)\, d\mu(x) \lesssim \|1-u\|_{N^{1,1}(X)}<\epsilon,\]
where the second to last inequality holds since every upper gradient of $u$ is also an upper gradient of $1-u$. By choosing $\epsilon$ small enough, the above estimate yields a contradiction, and hence $N^{1,1}_0(X) \not= N^{1, 1}(X)$.

For $p>1$, by \eqref{exam-estimate} and the H\"older inequality, we have  that 
\begin{align*}
\frac{1}{2^p} &\leq \left(\int_{[0, \xi)} {g_u} K^{j(x)/p} \frac{\lambda(x)}{\mu(x)K^{j(x)/p}}  \, d\mu\right)^p\\
& \leq \int_{[0, \xi)} {g_u}^p K^{j(x)}\, d\mu \left(\int_{[0, \xi)} \left(\frac{\lambda(x)}{\mu(x)K^{j(x)/p}}\right)^{\frac{p}{p-1}}\, d\mu\right)^{p-1}\\
&\le  {R_p}^{p-1} \int_{[0, \xi)} {g_u}^p K^{j(x)}\, d\mu,
\end{align*}
where $j(x)$ is the largest integer such that $j(x)\leq |x|$. Here the last inequality holds since
\[\int_{[0, \xi)} \left(\frac{\lambda(x)}{\mu(x)K^{j(x)/p}}\right)^{\frac{p}{p-1}}\, d\mu\approx \int_{0}^\infty \frac{\lambda(t)^{\frac{p}{p-1}}}{\mu(t)^{\frac{p}{p-1}}K^{\frac{t}{p-1}}} \mu(t)\, dt=R_p.\]
Integrating over all $\xi\in \bx$, since $\nu(\bx)= 1$ and $R_p<+\infty$, we obtain by Fubini's theorem that
\begin{align*}
\frac{1}{2^p}&\lesssim \int_\bx \int_{X} {g_u(x)}^p \chi_{[0,\xi)}(x) K^{j(x)}\, d\mu(x)\, d\nu(\xi) \\
&= \int_X g_u(x)^p K^{j(x)}\left(\int_\bx \chi_{[0, \xi)}(x)\, d\nu(\xi)\right)\, d\mu(x)\\
&= \int_X g_u(x)^p K^{j(x)} \nu(I_x)\, d\mu(x),
\end{align*}
where the notations $I_x$ and $j(x)$ are the same ones as those we used before.
Since $\nu(I_x)\approx K^{-j(x)}$, we obtain the estimate
\[\frac{1}{2} \lesssim \left(\int_{X} g_u(x)^p\, d\mu(x) \right)^{1/p}\leq \| 1-u\|_{N^{1,p}(X)} <\epsilon,\]
where the second to last inequality holds since every upper gradient of $u$ is also an upper gradient of $1-u$. By choosing $\epsilon$ small enough, the above estimate gives a contradiction, and hence $N^{1,p}_0(X) \not= N^{1, p}(X)$ for $p>1$.

Since $N^{1,p}_0(X) \subset N^{1,p}(X)$ by definition, we obtain $N^{1, p}_{0}(X) \subsetneq N^{1, p}(X)$ for all  $p\geq 1$.

\end{proof}

\begin{cor}\label{dot-N-subset}
Let $p\geq 1$ and assume that $R_p<\infty$. Then we have
\[\dot N^{1,p}_{0}(X)\subsetneq \dot N^{1,p}(X).\]
\end{cor}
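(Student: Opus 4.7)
The plan is to mimic the proof of Proposition \ref{N-subset}, but with a key simplification: since the homogeneous norm $\|u\|_{\dot N^{1,p}(X)} = |u(0)| + \inf_g \|g\|_{L^p(X)}$ already sees the value at the root directly, I no longer need to hunt for a point $x_\xi \in [0, x_1(\xi)]$ where $u$ stays close to $1$. The point $0$ itself will serve as the anchor. Correspondingly, I do not need the assumption $\mu(X) < \infty$, which was only used in Proposition \ref{N-subset} to make $L^p$-estimates of the form $\|1-u\|_{L^p(X^1)}$ meaningful.

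I would argue by contradiction. Suppose $\dot N^{1,p}_0(X) = \dot N^{1,p}(X)$. Since $\mathbf{1} \in \dot N^{1,p}(X)$ (with upper gradient $0$), for every $\varepsilon > 0$ there exists a compactly supported $u \in \dot N^{1,p}(X)$ with $\|\mathbf{1}-u\|_{\dot N^{1,p}(X)} < \varepsilon$. By definition of the homogeneous norm this forces $|1 - u(0)| < \varepsilon$ and $\|g_u\|_{L^p(X)} < \varepsilon$ (using that any upper gradient of $u$ is also an upper gradient of $\mathbf{1}-u$). Choosing $\varepsilon < 1/2$ gives $|u(0)| > 1/2$.

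Next I exploit compact support: for every $\xi \in \partial X$, the function $u$ vanishes outside a bounded set in $X$, so $u(x) \to 0$ as $[0,\xi) \ni x \to \xi$. The upper gradient inequality along the geodesic ray $[0,\xi)$ then yields
\[
\tfrac{1}{2} < |u(0)| \;\leq\; \int_{[0,\xi)} g_u\, ds \;=\; \int_{[0,\xi)} g_u \,\frac{\lambda(x)}{\mu(x)}\, d\mu(x)
\]
for every $\xi \in \partial X$. This is the analog of display \eqref{exam-estimate} in the proof of Proposition \ref{N-subset}.

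The final step is the same integration-over-$\partial X$ argument used there. For $p > 1$, Hölder's inequality against the weight $K^{j(x)/p}$ gives
\[
\tfrac{1}{2^p} \;\lesssim\; R_p^{\,p-1} \int_{[0,\xi)} g_u^p\, K^{j(x)}\, d\mu,
\]
and integrating over $\xi$ and applying Lemma \ref{lemma3.6} produces $\frac{1}{2^p} \lesssim R_p^{\,p-1}\|g_u\|_{L^p(X)}^p < R_p^{\,p-1}\varepsilon^p$. For $p=1$, Fubini combined with $\nu(I_x) \approx K^{-j(x)}$ and the definition of $R_1$ yields $\frac{1}{2} \lesssim R_1 \|g_u\|_{L^1(X)} < R_1\,\varepsilon$. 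In either case, choosing $\varepsilon$ small enough relative to $R_p$ contradicts the initial choice of $u$, so $\dot N^{1,p}_0(X) \subsetneq \dot N^{1,p}(X)$. I do not expect any serious obstacle since the argument is a direct adaptation of the proof already given for Proposition \ref{N-subset}; the only point to verify carefully is that dropping $\mu(X)<\infty$ costs nothing because the $L^p$-part of the inhomogeneous norm is never used in the contradiction.
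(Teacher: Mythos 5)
Your proof is correct and follows essentially the same route as the paper: the authors likewise reduce to the argument of Proposition \ref{N-subset} with $u(x_\xi)$ replaced by $u(0)$, exploiting that the homogeneous norm controls $|1-u(0)|$ directly so that neither the search for $x_\xi$ nor the hypothesis $\mu(X)<\infty$ is needed. Nothing further to add.
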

\begin{proof}
Suppose $\dot N^{1,p}_{0}(X)= \dot N^{1,p}(X)$. Since $1\in \dot N^{1,p}(X)$, it follows that for every $\epsilon>0$, there exists a function $u\in \dot N^{1, p}(X)$ with compact support such that
\[\|1-u\|_{\dot N^{1, p}(X)}<\epsilon.\]
Then by the definition of our $\dot N^{1, p}$-norm, we have
$|u(0)-1|<\epsilon$ and hence $|u(0)|>1-\epsilon$. 

Then using an argument similar to the one in the proof of Proposition \ref{N-subset} (replace $u(x_\xi)$ with $u(0)$), we obtain a contradiction. The claim follows.
\end{proof}

The above results give a full picture for the density properties for homogeneous Newtonian spaces $\dot N^{1, p}(X)$ and for Newtonian spaces $N^{1, p}(X)$ when $\mu(X)<\infty$. When the total measure is infinite, the density results for the
Newtonian space $N^{1, p}(X)$ are quite different.
\begin{lem}\label{K_1_dense} 
Let $K=1$, i.e., $X$ be a $1$-regular tree and assume that $\mu(X)=\infty$. Then for any $f\in N^{1, p}(X)$, there exists a sequence of compactly supported $N^{1, p}$-functions $\{f_n\}_{n\in\N}$ such that $f_n\rightarrow f$ in $N^{1, p}(X)$.
\end{lem}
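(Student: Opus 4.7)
The plan is to approximate $f\in N^{1,p}(X)$ by the products $f\varphi_n$, where $\varphi_n$ are compactly supported cutoffs with $0\le\varphi_n\le 1$, $\varphi_n\equiv 1$ on $X^n$, and $\varphi_n\to 1$ pointwise. Since $K=1$, the tree $X$ is essentially the half-line $[0,\infty)$ with $d\mu=\mu\,dt$ and $ds=\lambda\,dt$, and every function on $X$ depends only on $|x|$. The $L^p$-convergence $\|f\varphi_n-f\|_{L^p(X)}\to 0$ is immediate from dominated convergence, and the product rule for upper gradients gives $g_{f-f\varphi_n}\le |f|g_{\varphi_n}+(1-\varphi_n)g_f$, whose second term tends to zero in $L^p$ by dominated convergence. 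Thus the whole game is to force $\|fg_{\varphi_n}\|_{L^p(X)}\to 0$. We split into two cases according to whether $R_p$ is finite.

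If $R_p=\infty$, we use the explicit cutoff construction from the proof of Proposition~\ref{lemma2} (its gradient-convergence part only needs $R_p=\infty$, not $\mu(X)<\infty$; compare Corollary~\ref{dot-N-dense}). After truncating via $\max\{0,\min\{1,\varphi_n\}\}$, this yields a sequence of compactly supported $\varphi_n$ with $0\le\varphi_n\le 1$, $\varphi_n\equiv 1$ on $X^n$, and $\|g_{\varphi_n}\|_{L^p(X)}\to 0$. A standard reduction — replacing $f$ by $f_M=\max\{-M,\min\{M,f\}\}$, which satisfies $g_{f_M}\le g_f$ and $f_M\to f$ in $N^{1,p}(X)$ by dominated convergence — allows us to assume $|f|\le M$, whence $\|fg_{\varphi_n}\|_{L^p}\le M\|g_{\varphi_n}\|_{L^p}\to 0$, and a diagonal argument finishes the case.

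If $R_p<\infty$, the previous strategy fails (Corollary~\ref{dot-N-subset} forbids $g_{\varphi_n}\to 0$ in $L^p$), but instead we can exploit trace vanishing. By Corollary~\ref{trace-pp} (resp.\ Theorem~\ref{trace}) the trace $\Tr f$ exists, and since $\mu(X)=\infty$, Proposition~\ref{trace-0} forces $\Tr f=0$; since $\partial X$ consists of a single point when $K=1$, this says exactly $\lim_{t\to\infty}f(t)=0$. Set $s_n=n$, and for $p>1$ choose $b_n\in(s_n,\infty)$ so that $I_n:=\int_{s_n}^{b_n}\lambda^{p/(p-1)}\mu^{1/(1-p)}\,dt$ equals half of $\int_{s_n}^{\infty}\lambda^{p/(p-1)}\mu^{1/(1-p)}\,dt$, which is positive and finite because $R_p<\infty$. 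Define $\varphi_n\equiv 1$ on $[0,s_n]$, $\varphi_n\equiv 0$ on $[b_n,\infty)$, and take the H\"older-extremal monotone transition
\[
\varphi_n(t)=1-I_n^{-1}\int_{s_n}^{t}\lambda^{p/(p-1)}\mu^{1/(1-p)}\,du\quad\text{on }[s_n,b_n].
\]
A direct computation yields $\int_X g_{\varphi_n}^p\,d\mu=I_n^{\,1-p}$, while writing $f(t)=-\int_t^{\infty}f'(u)\,du$ and applying H\"older gives the pointwise bound
\[
|f(t)|^p\le\|g_f\|_{L^p((t,\infty),\mu)}^p\Bigl(\int_t^{\infty}\lambda^{p/(p-1)}\mu^{1/(1-p)}\,du\Bigr)^{p-1}\le 2^{p-1}\|g_f\|_{L^p((s_n,\infty),\mu)}^p\,I_n^{p-1}
\]
for $t\ge s_n$. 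Multiplying and integrating, $\int_X|f|^p g_{\varphi_n}^p\,d\mu\le 2^{p-1}\|g_f\|_{L^p((s_n,\infty),\mu)}^p\to 0$. The $p=1$ case is analogous, using $R_1=\|\lambda/\mu\|_{L^\infty}<\infty$ and placing the transition on a subinterval of $[s_n,\infty)$ where $\lambda/\mu$ approximates its essential supremum on $[s_n,\infty)$. The main obstacle is precisely this $R_p<\infty$ case: a near-equality in H\"older is essential in order to trade the smallness of $f$ at infinity against the forced largeness of $\|g_{\varphi_n}\|_{L^p}$.
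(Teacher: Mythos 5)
Your proposal is correct, but it takes a genuinely different route from the paper. The paper's proof uses no cutoff multiplication and no case distinction on $R_p$: from Proposition \ref{trace-0} it only extracts $\liminf_{x\to\xi_0}f(x)=0$ along the unique ray (after reducing to $f\ge 0$), which gives $f(x_n)\le\int_{[x_n,\xi_0)}g_f\,ds$; it then defines $f_n$ to agree with $f$ on $X^n$ and to descend linearly to zero beyond level $n$ at rate $2g_f$, i.e.\ $f_n=\max\{0,\,f(x_n)-2\int_{[x_n,x]}g_f\,ds\}$. The displayed inequality forces $f_n$ to hit zero at finite distance (compact support), while $0\le f_n\le f$ and $g_{f_n}\le 2g_f$ give $\|f_n-f\|_{N^{1,p}(X)}\le 3\|f\|_{N^{1,p}(\{|x|\ge n\})}\to 0$. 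Your argument instead multiplies by cutoffs and splits according to $R_p$: when $R_p=\infty$ you reuse the cutoffs of Proposition \ref{lemma2} (correctly observing that their gradient estimate needs only $R_p=\infty$) together with truncation of $f$; when $R_p<\infty$ you invoke the trace machinery (Corollary \ref{trace-pp} plus Proposition \ref{trace-0}) to get $\lim_{t\to\infty}f(t)=0$ and then use a H\"older-extremal transition so that the forced largeness of $\|g_{\varphi_n}\|_{L^p}$ is exactly compensated by the decay of $f$, yielding $\int_X|f|^pg_{\varphi_n}^p\,d\mu\lesssim\|g_f\|^p_{L^p((s_n,\infty),\mu)}\to 0$. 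I checked the computations ($\int_Xg_{\varphi_n}^p\,d\mu=I_n^{1-p}$, the pointwise H\"older bound on $|f(t)|^p$, and the $p=1$ variant) and they are sound. What the paper's approach buys is brevity, independence from $R_p$ and from the trace theorems, and a form that transfers to $K\ge 2$ via the symmetrization in Theorem \ref{dense-infinity}; what yours buys is a demonstration that the standard cutoff strategy survives even in the regime where Corollary \ref{dot-N-subset} forbids $\|g_{\varphi_n}\|_{L^p}\to 0$, at the cost of a case analysis and heavier prerequisites.
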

\begin{proof}
Notice that we may compose any $f\in N^{1,p}(X)$ as $f=f^{+}-f^{-}$ where $f^{+}=f\chi_{\{|f|\geq 0\}}\geq 0$ and $f^{-}=-f\chi_{\{|f|\leq 0\}}\geq 0$. Hence we may assume that $f\geq 0$.

Since $K=1$, $\bx$ contains only one point $\xi_0$ and there is a unique geodesic ray. It follows from Proposition \ref{trace-0} that 
\begin{equation}\label{lower-limit}
\liminf_{[0, \xi_0)\ni x\rightarrow \xi_0} f(x)=0.
\end{equation}
Denote by $x_n$ the vertex of $X$ with $|x_n|=n$ when $n\in \N$. Then it follows from \eqref{lower-limit} that 
\begin{equation}\label{one-lower}
f(x_n)-\int_{[x_n,\xi_0)} g_f\, ds\leq 0,\ \ \forall \ n\in \N.
\end{equation}
We define functions $f_n$ by setting
$$f_n(x):=
\begin{cases} f(x), &\text{ if }|x|\leq n;\\
\max\{0, f(x_n)-2\int_{[x_n,x]}g_f\,ds\}, & \text{ if } |x|> n.
\end{cases}
$$
Then it is easy to check that $f_n\in N^{1, p}(X)$, since $0\leq f_n\leq f$ and $g_{f_n}\leq 2 g_f$. Next, we check that $f_n$ is compactly supported. Assume not. Since $f_n$ is non-increasing for $|x|>n$ by definition, we have that $f_n(x)>0$ for any $|x|>n$ and hence that 
\[\lim_{x\rightarrow \xi_0} f_n(x)=f(x_n)-2\int_{[x_n, \xi_0)} g_f\, ds\geq 0.\]
Combining this with \eqref{one-lower}, we conclude that 
\[\int_{[x_n, \xi_0)} g_f\, ds =0.\]
Then $g_f=0$ for $|x|>n$ and it follows from \eqref{lower-limit} that $f$ has to be identically $0$ for $|x|\geq n$, which is a contradiction. Hence $f_n$ is compactly supported.

At last, we estimate the $N^{1, p}$-norm of $f_n-f$. By the fact that $0\leq f_n\leq f$ and $g_{f_n}\leq 2 g_f$, we obtain the estimate 
\begin{align*}
\|f_n-f\|_{N^{1, p}(X)}&=\|f_n-f\|_{N^{1, p}({X\cap\{|x|\geq n\}})}\\
&\leq \|f_n\|_{N^{1, p}({X\cap\{|x|\geq n\}})}+\|f\|_{N^{1, p}({X\cap\{|x|\geq n\}})}\\
&\leq 3\|f\|_{{N^{1, p}({X\cap\{|x|\geq n\}})}}\rightarrow 0 \ \ \text{as}\ \ n\rightarrow 0,
\end{align*}
since $f\in N^{1, p}(X)$. Thus $\{f_n\}_{n\in \N}$ is a sequence of compactly supported $N^{1, p}$-functions with $f_n\rightarrow f$ in $N^{1, p}(X)$, which finishes the proof.
\end{proof}
If $\int_0^{\infty}\lambda(t)\, dt=\infty$, then $X$ is complete and unbounded with respect to distance $d$ and it follows by using the suitable cutoff functions that $N^{1, p}_0(X)=N^{1, p}(X)$. Our next result shows that this is also the case when $X$ is bounded and not complete if we assume $\mu(X)=\infty$.

\begin{thm}\label{dense-infinity}
Let  $1 \leq p<\infty$ and assume that $\mu(X)=\infty$. Then we have that
\[N^{1, p}_{0}(X)=N^{1,p}(X).\]
\end{thm}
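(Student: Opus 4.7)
The plan is to adapt Lemma \ref{K_1_dense}'s construction to arbitrary $K \geq 1$ via a two-stage approximation. First I would perform two standard reductions. Writing $f = f^+ - f^-$ with $f^\pm \geq 0$, and noting that approximating each sign part separately suffices, I may assume $f \geq 0$. Next, by dominated convergence applied to $f, g_f \in L^p(X)$ (using $g_{\min(f,M)} \leq g_f$), the truncation $\min(f,M) \to f$ in $N^{1,p}(X)$ as $M \to \infty$, so I may assume $0 \leq f \leq M$ for some $M > 0$.

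For such $f$, Proposition \ref{trace-0} gives $\liminf_{[0,\xi)\ni x\to\xi} f(x) = 0$ for $\nu$-a.e. $\xi\in\bx$. Mimicking the $K=1$ construction, for each $n\in\N$ I would define the stopping potential
$$\psi_n(x) = \begin{cases} 0, & |x|\leq n, \\ 2\int_{[\pi_n(x),\,x]} g_f\, ds, & |x|>n,\end{cases}$$
where $\pi_n(x)$ is the ancestor of $x$ at level $n$, and set $f_n := (f-\psi_n)_+$. Since $0\leq f_n \leq f \in L^p(X)$, and the triangle inequality gives $g_{f_n} \leq g_f + g_{\psi_n} \leq 3 g_f \in L^p(X)$, we have $f_n \in N^{1,p}(X)$. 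As $f_n = f$ on $X^n$ and $|f-f_n| = \min(f,\psi_n) \leq f$ on $X\setminus X^n$, dominated convergence (applied to both $f$ and $g_f$) gives $f_n \to f$ in $N^{1,p}(X)$.

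The main obstacle is showing that each $f_n$ itself lies in $N^{1,p}_0(X)$. In the $K=1$ case Lemma \ref{K_1_dense} achieves compact support directly, because the uniqueness of the infinite geodesic and the $\liminf$ condition force $f(\pi_n(\xi_0)) - 2\int_{[\pi_n(\xi_0),x]} g_f\, ds \leq 0$ eventually. For $K\geq 2$ this reasoning only yields "eventually zero along $\nu$-a.e. $\xi$"; along the null set of bad geodesics, $f_n$ may persist to $|x|=\infty$. To circumvent this, I would approximate each $f_n$ by a sequence of compactly supported functions $f_{n,m} := f_n \chi_m$, where $\chi_m$ is a level cutoff equal to $1$ on $X^m$ and linearly vanishing on $X^{m+L_m}\setminus X^m$. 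The $L^p$ piece of the error is controlled by $\|(1-\chi_m)f_n\|_{L^p}\to 0$ and $\|(1-\chi_m)g_{f_n}\|_{L^p}\to 0$ via dominated convergence, while for the gradient one needs $\|f_n\, g_{\chi_m}\|_{L^p(X)} \to 0$ as $m\to\infty$.

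The hard part is this last estimate: one exploits that $f_n \to 0$ along $\nu$-a.e. geodesic (inherited from Proposition \ref{trace-0} through the defining inequality $f_n(x) \leq 2\int_{[x,\xi)} g_f\, ds$ on $[0,\xi)$), so that on the transition shell $\{m \leq |x| < m+L_m\}$ the function $f_n$ is small in a $\nu$-weighted $L^p$ sense. Applying Lemma \ref{lemma3.6} to transfer the $X$-integral of $f_n^p$ to an integral over $\bx$ of the corresponding path integrals, one chooses $L_m$ so as to balance the smallness of $f_n$ against the gradient $g_{\chi_m} = 1/(L_m\lambda)$ on the shell. This is particularly delicate when $X$ is metrically bounded (the only case not already handled by the remark preceding the theorem), since standard cutoffs have $\|g_{\chi_m}\|_{L^p}$ that does not tend to zero; the argument genuinely needs the pointwise decay of $f_n$ rather than the smallness of $g_{\chi_m}$ alone. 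A final diagonal extraction $f_{n,m(n)}$ gives compactly supported approximants to $f$, completing the proof that $N^{1,p}_0(X) = N^{1,p}(X)$.
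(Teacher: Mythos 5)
Your reductions and the construction $f_n=(f-\psi_n)_+$ are fine: $0\le f_n\le f$, $g_{f_n}\le 3g_f$, and $f_n\to f$ in $N^{1,p}(X)$ by dominated convergence. The genuine gap is the step you yourself flag as ``the hard part'': showing $f_n\in N^{1,p}_0(X)$. Two things go wrong with the multiplicative cutoff $\chi_m$. First, the decay $f_n\to 0$ along $[0,\xi)$ inherited from Proposition \ref{trace-0} holds only for $\nu$-a.e.\ $\xi$, with no rate and no uniformity; the support of $f_n$ may still reach $\bx$ along an exceptional family of geodesics of $\nu$-measure zero but positive $\mu$-measure (a single ray $\gamma_0$ already has $\mu(\gamma_0)=\int_0^\infty\mu(t)\,dt>0$). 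On such a ray any cutoff that equals $1$ at level $m$ where $f_n\approx 1$ and vanishes near $\bx$ must pay, by H\"older along that single ray, a gradient cost of at least $\bigl(\int_m^\infty\lambda(t)^{p/(p-1)}\mu(t)^{1/(1-p)}\,dt\bigr)^{1-p}$ in $L^p(\mu)$ --- note there is no $K^{t/(1-p)}$ gain here because only one ray is involved --- and when $\int_0^\infty\lambda^{p/(p-1)}\mu^{1/(1-p)}\,dt<\infty$ this cost is bounded away from zero (indeed it blows up as $m\to\infty$). Ruling out this scenario requires showing that $f\in N^{1,p}(X)$ with $\mu(X)=\infty$ cannot stay bounded below along \emph{any} ray, which is a capacity argument on the subtrees branching off that ray; nothing in your sketch supplies it. Second, even on the good geodesics, your level cutoff has $g_{\chi_m}=1/(L_m\lambda)$ on the shell, and $\int_{\rm shell}f_n^p\lambda^{-p}L_m^{-p}\,d\mu$ is not controlled by the standing hypotheses ($\mu/\lambda^p$ need not even be locally integrable), while the energy-minimizing profile used in Proposition \ref{lemma2} has $L^p$-norm $\bigl(\int_m^{\infty}r\bigr)^{1-p}\to\infty$ whenever $R_p<\infty$. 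So ``balancing $L_m$ against the smallness of $f_n$'' cannot be carried out as stated; this is precisely the case the theorem must cover.

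The paper avoids all of this by a symmetrization that your proposal does not contain. For each level-$n$ vertex $x_{n,j}$ one iteratively replaces $f$ on the subtree $\Gamma_{x_{n,j}}$ by copying the branch of minimal $N^{1,p}$-energy onto the other $K-1$ branches at every generation; the limit $v$ is radial, satisfies $\|v\|_{N^{1,p}(\Gamma_{x_{n,j}})}\le\|f\|_{N^{1,p}(\Gamma_{x_{n,j}})}$, and lives on a $1$-ary tree of infinite measure, where Lemma \ref{K_1_dense} produces a \emph{genuinely compactly supported} approximant with error $\lesssim\|f\|_{N^{1,p}(\Gamma_{x_{n,j}})}$. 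Summing over $j$ gives error $\lesssim\|f\|_{N^{1,p}(X\setminus X^n)}\to0$, with no cutoff of the type you propose ever needed. If you want to complete your route, you would have to prove the ``no ray of positive $\liminf$'' statement and replace the level cutoff by one adapted to $g_f$ itself; at that point you are essentially reconstructing the paper's argument.
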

\begin{proof}
If $K=1$, the result follows directly from Lemma \ref{K_1_dense}. Hence we assume $K\geq 2$ in the ensuing proof.

For any $f\in N^{1, p}(X)$, by the same argument as in Lemma \ref{K_1_dense}, we may assume that $f\geq 0$. It suffices to construct a sequence $\{f_n\}_{n\in \N}$ of compactly supported $N^{1, p}$-functions such that $f_n\rightarrow f$ in $N^{1, p}(X)$. 

For each $n\in \N$, we denote by $\{x_{n, j}\}_{j=1}^{K^n}$ the vertices of $n$-level, i.e., $|x_{n, j}|=n$ for all $j=1, \cdots, K^n$. For any $x_{n, j}$, we study the subtree $\Gamma_{x_{n, j}}$ which is a subset of $X$ with root $x_{n, j}$. More precisely,   
\[\Gamma_{x_{n, j}}:=\{x\in X: x_{n, j}<x\}.\]
Since every vertex has exactly $K$ children, we may divide $\Gamma_{x_{n, j}}$ into $K$ subsets, where each subset contains a subtree whose root is a child of $x_{n, j}$ and  an edge connecting this child with $x_{n, j}$. We denote by $\{\Gamma^i_{x_{n, j}}\}_{i=1}^{K}$ these $K$ subsets.

Fix $f\in N^{1, p}(X)$. We first study the function $u:=f|_{\Gamma_{x_{n, j}}}$. If $\|f\|_{N^{1, p}(\Gamma_{x_{n, j}})}>0$,  we first modify the function $u$ to a function $v$ with $v(x)=v(|x|)$ for any $x\in \Gamma_{x_{n, j}}$, i.e., for any $x, y\in \Gamma_{x_{n, j}}$ with $|x|=|y|$, then $v(x)=v(y)$. The modification procedure is as follows:
\item[\underline{Step 1}]
Since $\Gamma_{x_{n, j}}=\bigcup_{i=1}^{K} \Gamma_{x_{n, j}}^i$, without loss of generality, we may assume
\begin{equation}
\label{ugam1}
\left\|u\right\|_{N^{1,p}(\Gamma_{x_{n, j}}^1)}=\min\{\|u\|_{N^{1,p}(\Gamma_{x_{n, j}}^i)}:i=1,2,\ldots,K\}.
\end{equation}
Then we define a function $u^1$ by identically copying the minimal $N^{1, p}$-energy subtree of $u$ (here is $u|_{\Gamma^1_{x_{n, j}}}$), to the other $k-1$ subtrees $\Gamma_{x_{n, j}}^i$,  $i=2, \cdots K$. More precisely, 
\begin{equation}
\notag
u^1(x):=
\begin{cases}
u(x),& \text{ if }x\in \Gamma_{x_{n, j}}^1;\\
u|_{\Gamma_{x_{n,j}}^1}(y) \ \text{with}\ y\in \Gamma_{x_{n, j}}^1, |y|=|x|,& \text{ if }x\in \Gamma_{x_{n, j}}^i.
\end{cases}
\end{equation}  
 It follows from \eqref{ugam1} that
\begin{equation}\notag
\|u^1\|_{N^{1, p}(\Gamma_{x_{n, j}})}\leq \|u\|_{N^{1, p}(\Gamma_{x_{n, j}})}.
\end{equation}
Then for any $x, y\in \Gamma_{x_{n, j}}\cap\{x\in X: n\leq|x|\leq n+1\}$  with $|x|=|y|$, we have $u^1(x)=u^1(y)$.

\item[\underline{Step 2}]
Denote by $\{x_{n+1,t}\}_{t=1}^K$ the $K$ children of $x_{n, j}$. We repeat the Step 1 by replacing the function $u$ and $\Gamma_{x_{n, j}}$ with $u^1$ and $\Gamma_{x_{n+1,t}}$, respectively. Here we repeat the Step 1 for all $K$ subtrees $\Gamma_{x_{n+1,t}}$, $t=1, \cdots, K$. Hence we obtain a function $u^2$ on $\Gamma_{x_{n, j}}$ by additionally letting $u^2(x)=u^1(x)$ if $x\in \Gamma_{x_{n, j}}$ with $n\leq|x|\leq n+1$.
Moreover, it is easy to check that 
\[\|u^2\|_{N^{1, p}(\Gamma_{x_{n, j}})}\leq \|u^1\|_{N^{1, p}(\Gamma_{x_{n, j}})}\leq \|u\|_{N^{1, p}(\Gamma_{x_{n, j}})} \]
and that $u^2(x)=u^2(y)$ for any $x, y\in \Gamma_{x_{n, j}}\cap\{x\in X: n\leq|x|\leq n+2\}$  with $|x|=|y|$.

Continuing this procedure, we obtain a sequence of functions $\{u^k\}_{k\in \N}$. We define $v=\lim_{k\rightarrow \infty} u^k$. Then we know from induction that 
\begin{equation}\label{estimate-v}
\|v\|_{N^{1, p}(\Gamma_{x_{n, j}})}\leq \|u\|_{N^{1, p}(\Gamma_{x_{n, j}})}=\|f\|_{N^{1, p}(\Gamma_{x_{n, j}})}
\end{equation}
and that $v(x)=v(y)$ for any $x, y\in \Gamma_{x_{n, j}}$ with $|x|=|y|$.

The value of function $v(x)$ only depends on the distance $d(x_{n, j}, x)$. We may regard $v$ as a function on a $1$-regular tree with root $x_{n, j}$ and infinite measure, since $\mu(\Gamma_{x_{n,j}})=\infty$. Hence, from the proof of Lemma \ref{K_1_dense}, we are able to choose a compactly supported $N^{1, p}$-function $f_{{n, j}}$ on $\Gamma_{x_{n, j}}$ with 
\begin{equation}\label{estimate-upper}
\|f_{n, j}-v\|_{N^{1, p}(\Gamma_{x_{n, j}})}\leq \|u\|_{N^{1, p}(\Gamma_{x_{n, j}})}=\|f\|_{N^{1, p}(\Gamma_{x_{n, j}})}.
\end{equation}
Then it follows from \eqref{estimate-v} and \eqref{estimate-upper} that
\begin{align}
\|f_{n, j}-f\|_{N^{1, p}(\Gamma_{x_{n, j}})}&\leq \|f_{n, j}-v\|_{N^{1, p}(\Gamma_{x_{n, j}})}+\|v-f\|_{N^{1, p}(\Gamma_{x_{n, j}})}\notag\\
&\leq \|f_{n, j}-v\|_{N^{1, p}(\Gamma_{x_{n, j}})}+\|v\|_{N^{1, p}(\Gamma_{x_{n, j}})}+\|f\|_{N^{1, p}(\Gamma_{x_{n, j}})}\notag\\
&\leq 3\|f\|_{N^{1, p}(\Gamma_{x_{n, j}})}.\label{estimate-v-f}
\end{align}

If $\|f\|_{N^{1, p}(\Gamma_{x_{n, j}})}=0$, then $f=0$ on $\Gamma_{x_{n, j}}$ and we just define $f_{x_{n, j}}=f|_{\Gamma_{x_{n, j}}}$.

At last, we define a function $f_n$ by setting
$$f_n(x):=
\begin{cases} f(x), &\text{ if }|x|\leq n;\\
f_{{n, j}}(x), & \text{ if } x\in \Gamma_{x_{n, j}}.
\end{cases}
$$ 
Then it is easy to check that $f_n\in N^{1, p}(X)$ and that $f_n$ is compactly supported, since $f_{{n, j}}$ are compactly supported for any $j=1, \cdots, K^n$. 
It follows from estimate \eqref{estimate-v-f} that
\begin{align*}
\|f_n-f\|_{N^{1, p}(X)}&=\|f_n-f\|_{N^{1, p}(X\cap \{|x|\geq n\})}=\sum_{j=1}^{K^n} \|f_n-f\|_{N^{1, p}(\Gamma_{x_{n, j}})}\\
&=\sum_{j=1}^{K^n} \|f_{n, j}-f\|_{N^{1, p}(\Gamma_{x_{n, j}})}\leq 3\sum_{j=1}^{K^n}\|f\|_{N^{1, p}(\Gamma_{x_{n, j}})}\\
&=3 \|f\|_{N^{1, p}(X\cap \{|x|\geq n\})} \rightarrow 0,\ \ \text{as} \ n\rightarrow 0,
\end{align*}
since $f\in N^{1, p}(X)$. Thus we have found  a sequence $\{f_n\}_{n\in \N}$
of compactly supported $N^{1, p}$-functions with $f_n\rightarrow f$ in $N^{1, p}(X)$, which finishes the proof.
\end{proof}

\section{Proofs of Theorems}\label{proofs}
\begin{proof}[Proof of Theorem \ref{main-thm1}]
$(i) \Rightarrow (ii)$ is given by Theorem \ref{trace}; $(ii)\Rightarrow (iii)$ is trivial and $(iii)\Rightarrow (i)$ is given by Theorem \ref{trace-example}.

$(i)\Rightarrow (iv)$ is given by Corollary \ref{dot-N-subset} and $(iv)\Rightarrow (i)$ is given by Corollary \ref{dot-N-dense}.
\end{proof}

\begin{proof}[Proof of Theorem \ref{main-thm2}]
$(i) \Rightarrow (ii)$ is given by Corollary \ref{trace-pp}; $(ii)\Rightarrow (iii)$ is trivial and $(iii)\Rightarrow (i)$ is given by Proposition \ref{trace-N-example}.

$(i)\Rightarrow (iv)$ is given by Proposition \ref{N-subset} and $(iv)\Rightarrow (i)$ is given by Corollary \ref{lemma2}.
\end{proof}

\begin{proof}[Proof of Theorem \ref{main-infinity}]
The statement (1) and statement (2) follow from Theorem \ref{dense-infinity}
and Proposition \ref{trace-0} respectively. The statement (3) follows by  combining Corollary \ref{trace-pp} and Proposition \ref{no-trace-infinity}. Corollary \ref{trace-pp} gives the statement (4). 
\end{proof}

\section*{Acknowledgments}

The authors would like to thank Prof. Nageswari Shanmugalingam for reading the manuscript and giving comments that helped improve the paper.

\noindent
Department of Mathematics and Statistics, University of Jyv\"askyl\"a, PO~Box~35, FI-40014 Jyv\"askyl\"a, Finland.

\medskip

\noindent Pekka Koskela

\noindent{\it E-mail address}:  \texttt{pekka.j.koskela@jyu.fi}

\noindent Khanh Ngoc Nguyen

\noindent{\it E-mail address}:  \texttt{khanh.n.nguyen@jyu.fi}, \texttt{khanh.mimhus@gmail.com}

\noindent Zhuang Wang

\noindent{\it E-mail address}:  \texttt{zhuang.z.wang@jyu.fi}

\end{document}